\numberwithin{equation}{section}    
\theoremstyle{plain}
\newtheorem{thm}{Theorem}[section]
\newtheorem{lem}[thm]{Lemma}
\newtheorem{prop}[thm]{Proposition}
\newtheorem{cor}[thm]{Corollary}
\newtheorem{conj}[thm]{Conjecture}
\theoremstyle{definition}
\newtheorem{exmp}[thm]{Example}
\theoremstyle{remark}
\newtheorem{rem}[thm]{Remark}
\newtheorem*{rem*}{Remark}
\newcommand{\bs}{\boldsymbol}
\newcommand{\be}{\begin{equation}}    
\newcommand{\ee}{\end{equation}}    
\newcommand{\beu}{\begin{equation*}}    
\newcommand{\eeu}{\end{equation*}}    
\newcommand{\bea}{\begin{eqnarray}}    
\newcommand{\eea}{\end{eqnarray}}    
\newcommand{\beaa}{\begin{eqnarray*}}    
\newcommand{\eeaa}{\end{eqnarray*}}    
\newcommand{\bmx}{\begin{pmatrix}}    
\newcommand{\emx}{\end{pmatrix}}
\newcommand{\mf}{\mathfrak}
\newcommand{\mc}{\mathcal}
\newcommand{\wt}{\widetilde}    
\newcommand{\al}{{\alpha}}    
\newcommand{\nn}{\nonumber}
\newcommand{\la}{\lambda}    
\newcommand{\Z}{{\mathbb Z}}
\newcommand{\C}{{\mathbb C}}
\newcommand{\Wr}{{\rm Wr}} 
\newcommand{\id}{{\mathrm{id}}}
\DeclareMathOperator{\ord}{ord}
\newcommand{\gl}{\mf{gl}}
\DeclareMathOperator{\End}{End}
\newcommand{\HH}{\mathcal H}
\author{Chenliang Huang} 
\address{Department of Mathematical Sciences, 402
  N. Blackford St, LD 270, IUPUI, Indianapolis, IN 46202, USA.  }
\email{ch30@iupui.edu}
\author{Evgeny Mukhin} 
\address{Department of Mathematical Sciences, 402
  N. Blackford St, LD 270, IUPUI, Indianapolis, IN 46202, USA.  }
\email{mukhin@math.iupui.edu}
\author{Beno\^{\i}t Vicedo}
\address{Department of Mathematics, University of York, York YO10 5DD, UK.}
\email{benoit.vicedo@gmail.com}  
\author{Charles Young}
\address{School of Physics, Astronomy and Mathematics, University of Hertfordshire, College Lane, Hatfield AL10 9AB, UK.}
\email{c.a.s.young@gmail.com}
\title[ Bethe ansatz equation and rational pseudodifferential operators]{The solutions of $\mathfrak{gl}_{M|N}$ Bethe ansatz equation\\[1mm]
and rational pseudodifferential operators}
\begin{document}
\maketitle
\begin{abstract}
We describe a reproduction procedure which, given a solution of the $\mathfrak{gl}_{M|N}$ Gaudin Bethe ansatz equation associated to a tensor product of polynomial modules, produces a family $P$ of other solutions called the population. 
To a population we associate a rational pseudodifferential operator $R$ and a superspace $W$ of rational functions. 

We show that if at least one module is typical then the population $P$ is canonically identified with the set of minimal factorizations of $R$ and with the space of full superflags in $W$.

We conjecture that the singular eigenvectors (up to rescaling) of all $\mathfrak{gl}_{M|N}$ Gaudin Hamiltonians are in a bijective correspondence with certain superspaces of rational functions.

\end{abstract}
	
\section{Introduction}
We study the Gaudin model associated to tensor products of polynomial modules over the Lie superalgebra $\gl_{M|N}$. The main method is the Bethe ansatz; see \cite{MVY14}. It is well-known that  the Bethe ansatz method in its straightforward formulation is incomplete -- it does not provide the full set of eigenvectors of the Hamiltonians; see \cite{MVcounterexample}. In this paper we propose a regularization of the Bethe ansatz method, drawing our inspiration from \cite{MVpopulations}.
	
In the case of Lie algebras, the regularization of the Bethe ansatz is obtained by the identification of the spectrum of the model with opers -- linear differential operators with appropriate properties \cite{RybnikovProof, FFR}. In the case of $\gl_M$, the opers are reduced to scalar linear differential operators of order $M$ with polynomial kernels. The spaces of polynomials of dimension $M$ obtained this way are intersection points of Schubert varieties whose data is described by the parameters of the Gaudin model. Moreover, the action of the algebra of Gaudin Hamiltonians can be identified with the regular representation of the scheme-theoretic intersection algebra, \cite{MTVschubert}.

\medskip

We argue that in the case of the Lie superalgebra $\gl_{M|N}$ one should study rational pseudodifferential operators and appropriate spaces of rational functions which we call $\gl_{M|N}$ spaces. 

\medskip

Let us describe our findings in more detail. The $\gl_{M|N}$ Gaudin model depends on the choice of a sequence of polynomial representations, each equipped with distinct complex evaluation parameters. The Bethe ansatz depends on a choice of Borel subalgebra. Such a choice is equivalent to the choice of a parity sequence $\bs s=(s_1,\dots,s_{M+N})$, $s_i\in\{\pm1\}$. The highest weights of representations and the evaluation parameters are encoded into polynomials $T_i^{\bs s}$ (see \eqref{T fun}).  A solution of the Bethe ansatz equation is represented by a sequence of monic polynomials $(y_1,\dots, y_{M+N-1})$, so that the roots of $y_i$ are Bethe variables corresponding to the $i$th simple root (see \eqref{y fun}). 

The key ingredient is the reproduction procedure (see Theorem \ref{GRP}), which given a solution of the Bethe ansatz equation (BAE) produces a family of new solutions along a simple root. If the simple root is even, then the BAE means that the kernel of the operator
$$
\bigg(\partial -\ln'\frac{T_i^{\bs s}y_{i-1}y_{i+1}}{T_{i+1}^{\bs s}y_i} \bigg)\bigg(\partial-\ln' y_i\bigg)
$$
consists of polynomials. Then one shows that all tuples of the form $(y_1,\dots, \wt{y}_i,\dots, y_{M+N-1})$, where  $\wt{y}_i$ is any (generic) polynomial in the kernel of the differential operator, represent solutions of the BAE. This gives the bosonic reproduction procedure, which was described in \cite{MVpopulations}. 

If the simple root is odd then the BAE means that $y_i$ divides a certain explicit polynomial $\mc N$ and it turns out that the tuple $(y_1,\dots, \wt{y}_i,\dots, y_{M+N-1})$, $\wt{y}_i=\mc N/y_i$, again satisfies the BAE (if generic). This gives the fermionic reproduction procedure. Moreover, the fermionic reproduction can be rewritten as an equality of rational pseudodiffential operators (assuming $s_i=1$):
$$
\bigg(\partial-\ln'\frac{T_i^{\bs s}y_{i-1}}{y_i}\bigg)\bigg(\partial-\ln'\frac{y_{i+1}}{T_{i+1}^{\bs s}y_i}\bigg)^{-1}=
\bigg(\partial-\ln'\frac{\wt{y}_{i}}{T_i^{\wt {\bs s }} y_{i-1}}\bigg)^{-1}\bigg(\partial-\ln'\frac{T_{i+1}^{\wt{\bs s}} \wt{y}_i}{y_{i+1}}\bigg),
$$
where $\wt{\bs s}=(s_1,\dots, s_{i+1},s_i,\dots, s_{M+N})$.

The bosonic and fermionic procedures are very different in nature. The bosonic procedure describes a one-parameter family of solutions of the BAE. However, these solutions are not physical: $\deg \wt {y}_i$ is large and the corresponding Bethe vector is zero on weight grounds. The fermionic procedure produces only one new solution. Moreover, in contrast to the bosonic case, the new BAE corresponds to a new choice of the Borel subalgebra. If the original solution produced an eigenvector which was singular with respect to the original Borel subalgebra, the new solution produces the eigenvector in the same isotypical component but singular with respect to a new Borel subalgebra. The two eigenvectors are related by the diagonal action of $\gl_{M|N}$.\footnote{These features are reminiscent of trigonometric Gaudin models and Gaudin with quasi-periodic boundary conditions \cite{MVquasipolynomials}, in which the diagonal symmetry is broken. In those cases reproduction produces one new solution, which describes the same eigenvector (up to proportionality) but with respect to a different Borel subalgebra.}

The most important feature of the bosonic and fermionic procedures is the conservation of the eigenvalues of the Gaudin Hamiltonians written in terms of the Bethe roots (see Lemma \ref{gl11wf}). We call the set of all solutions obtained by repeated applications of the reproduction procedures a \emph{population}.

\medskip

We define a rational pseudodifferential operator $R$ (see \eqref{RPDO of Pop}). In the standard parity $\bs s_0=(1,\dots,1,-1,\dots,-1)$, it has the form:
$R=D_{\bar 0} (D_{\bar 1})^{-1}$, where $D_{\bar 0}$,  $D_{\bar 1}$ are scalar differential operators of orders $M$ and $N$ with rational coefficients, given by:
\begin{align*}
&D_{\bar 0}=\bigg(\partial-\ln'\frac{T_1^{\bs s_0}y_0}{y_1}\bigg)\bigg(\partial-\ln'\frac{T_2^{\bs s_0}y_1}{y_2}\bigg)
\dots\bigg(\partial-\ln'\frac{T_M^{\bs s_0}y_{M-1}}{y_M}\bigg), \\
&D_{\bar 1}=\bigg( \partial-\ln'\frac{y_{M+N}}{T_{M+N-1}^{\bs s_0}y_{M+N-1}}\bigg)\dots\bigg(\partial-\ln'\frac{y_{M+2}}{T_{M+2}^{\bs s_0}y_{M+1}}\bigg)
\bigg(\partial-\ln'\frac{y_{M+1}}{T_{M+1}^{\bs s_0}y_{M}}\bigg).
\end{align*}
(Here we set $y_0=y_{M+N}=1$.)
We show that $R$ does not change under reproduction procedures (see Theorem \ref{Diff of Popu}) and, moreover, if at least one weight is typical, then 
the population is identified with the set of all minimal factorizations of $R$ into linear factors (see Theorem \ref{operator to population}).

\medskip

Then we study the space $W=V\oplus U$, where $V=\ker D_{\bar 0}$, $U=\ker D_{\bar 1}$. We show that if at least one weight is typical, then $U\cap V=0$. We think of $W$ as a superspace of dimension $M+N$, with even part $V$ and odd part $U$. We identify the population with the space of all full superflags in $W$ (see Theorem \ref{operator to population}).

The operators $D_{\bar 0}$ and  $D_{\bar 1}$ up to a conjugation coincide with $\gl_M$ and $\gl_N$ operators. 
It follows that $W$ consists of rational functions. In other words, $W$ is given by a pair of spaces of polynomials with prescribed ramification conditions linked via polynomials $y_M, T_M,T_{M+1}$. This leads us to a definition of a \emph{$\gl_{M|N}$ space} (see Section \ref{glM|N space}). The Gaudin Hamiltonians acting in tensor products of polynomial modules belong to a natural commutative algebra $\mc B(\bs \la)$ of higher Gaudin Hamiltonians. We conjecture that the joint eigenvectors of this algebra $\mc B(\bs \la)$ are parametrized by $\gl_{M|N}$ spaces  (see Conjecture \ref{the conj}).

	\medskip
	
	The paper is constructed as follows. In Sections \ref{sec: glMN}--\ref{sec: ba} we recall various facts and set up our notation:
	Section \ref{sec: glMN} is devoted to the Lie superalgebra $\gl_{M|N}$, Section \ref{sec: rpdos} to rational pseudodifferential operators, and 
	Section \ref{sec: ba} to the Gaudin model and the Bethe ansatz. In Section \ref{sec: rep} we recall the $\gl_2$ (bosonic) reproduction procedure and define the $\gl_{1|1}$ (fermionic) reproduction procedure. In Section \ref{sec: repglmn} we define the $\gl_{M|N}$ reproduction procedure and the rational pseudodifferential operator of a population. In Section \ref{sec: spaces} we define $\gl_{M|N}$ spaces of rational functions, and give the identification of superflags, minimal factorizations and points of a population (see Theorem \ref{operator to population}). In Section \ref{sec: conj} we give various conjectures and examples.
	
	\medskip
	
	{\bf Acknowledgments.} The research of EM is
partially supported by a grant from the Simons Foundation \#353831. CY is grateful to the Department of Mathematical Sciences, IUPUI, for hospitality during his visit in September 2017 when part of this work was completed. 
	
	\section{Preliminaries on $\gl_{M|N}$}\label{sec: glMN}
	Fix $M,N\in\Z_{\geq 0}$. In this section, we will recall some facts about $\gl_{M|N}$. For details see, for example, \cite{ChengWang}.

	\subsection{Lie superalgebra $\gl_{M|N}$ }
	A \emph{vector superspace} $V=V_{\bar{0}}\oplus V_{\bar{1}}$ is a $\Z_2$-graded vector space. The \emph{parity} of a homogeneous vector $v$ is denoted by $|v|\in\Z/2\Z=\{\bar{0},\bar{1}\}$. We set $(-1)^{\bar{0}}=1$ and $(-1)^{\bar{1}}=-1$. An element $v$ in $V_{\bar{0}}$ (respectively $V_{\bar{1}}$) is called \emph{even} (respectively \emph{odd}), and we write $|v|=\bar{0}$ (respectively $|v|=\bar{1}$). Let $\C^{M|N}$ be a complex vector superspace, with $\dim(\C^{M|N})_{\bar{0}}=M$ and $\dim(\C^{M|N})_{\bar{1}}=N$. Choose a homogeneous basis $e_i$, $i=1,\dots,M+N$, of $\C^{M|N}$ such that $|e_i|=\bar{0}$, $i=1,\dots, M$, and $|e_i|=\bar{1}$, $i=M+1,\dots, M+N$. Set $|i|=|e_i|$.

	Let $\bs s=(s_1,\dots,s_{M+N})$, $s_i\in\{\pm1\}$, be a sequence such that $1$ occurs exactly $M$ times. We call such a sequence a \emph{parity sequence}. We call the parity sequence $\bs s_0=(1,\dots,1,-1,\dots,-1)$ \emph{standard}. Denote the set of all parity sequences by $S_{M|N}$. The order of $S_{M|N}$ is $\binom{M+N}{M}$.
	The set $S_{M|N}$ is identified with $\mathfrak{S}_{M+N}/(\mathfrak{S}_M\times\mathfrak{S}_N)$, where $\mathfrak{S}_k$ denotes the permutation group of $k$ letters. We fix a lifting $S_{M|N}=\mathfrak{S}_{M+N}/(\mathfrak{S}_M\times\mathfrak{S}_N)\to \mathfrak{S}_{M+N}$: for each $\bs s\in S_{M|N}$, we define $\sigma_{\bs s}\in\mathfrak{S}_{M+N}$ by
	\[\sigma_{\bs s}(i)=\begin{cases}
	\#\{j\;|\; j\leq i, \ s_j=1\} & \mbox{if } s_i=1,\\
	M+\#\{j\;|\;j\leq i,\ s_j=-1\} & \mbox{if } s_i=-1.
	\end{cases}\] Note that $\sigma_{\bs s_0}=\id$ and $(-1)^{|\sigma_{\bs s}(i)|}=s_i$. (The element $\sigma_{\bs s}$ is sometimes called an \emph{unshuffle}.)
	
	For a parity sequence $\bs s\in S_{M|N}$ and $i=1,\dots, M+N$, define numbers 
	$$
	\bs s_i^+=\#\{j\;|\; j>i, \ s_j=1\}, \qquad \bs s_i^-=\#\{j\;|\; j<i,\ s_j=-1\}.
	$$
	We have 
	$$
	\bs s_i^+=\begin{cases} M-\sigma_{\bs s}(i) & \mbox{if } s_i=1,\\  \sigma_{\bs s}(i)-i  & \mbox{if } s_i=-1, \end{cases}
	\qquad 
	\bs s_i^-=\begin{cases}i-\sigma_{\bs s}(i) & \mbox{if } s_i=1,\\  \sigma_{\bs s}(i)-M-1  & \mbox{if } s_i=-1. \end{cases}
	$$
	
	\medskip
	
	The \emph{Lie superalgebra $\gl_{M|N}$} is spanned by $e_{ij}$, $i,j=1,\dots, M+N$, with $|e_{ij}|=|i|+|j|$, and the superbracket is given by 
	\[[e_{ij},e_{kl}]=\delta_{jk}e_{il}-(-1)^{(|i|+|j|)(|k|+|l|)}\delta_{il}e_{kj}.\]
	The universal enveloping algebra of $\gl_{M|N}$ is denoted by $U\gl_{M|N}$. 
	
	There is a non-degenerate invariant bilinear form $(\ ,\ )$ on $\gl_{M|N}$, such that \[
	(e_{ab},e_{cd})=(-1)^{|a|}\delta_{ad}\delta_{bc}.
	\] 
	The \emph{Cartan subalgebra} $\mathfrak{h}$ of $\gl_{M|N}$ is spanned by $e_{ii}$, $i=1,\dots, M+N$. The \emph{weight space} $\mathfrak{h}^*$ is the dual space of $\mathfrak{h}$. Let $\epsilon_i$, $i=1,\dots, M+N$, be a basis of $\mathfrak{h}^*$, such that $\epsilon_i(e_{jj})=\delta_{ij}$. The bilinear form $(\ ,\ )$ is extended to $\mathfrak{h}^*$ such that $(\epsilon_i,\epsilon_j)=(-1)^{|i|}\delta_{ij}$. The \emph{root system} $\Phi$ is a subset of $\mathfrak{h}^*$ given by \[
	\Phi=\{\epsilon_i-\epsilon_j\;|\;i,j=1,\dots, M+N\mbox{ and } i\neq j\}.
	\] 
	A root $\epsilon_i-\epsilon_j$ is called \emph{even} (respectively \emph{odd}), if $|i|=|j|$ (respectively $|i|\neq|j|$). 
	\subsection{Root systems}
	For each parity sequence $\bs s\in S_{M|N}$, define the set of \emph{$\bs s$-positive roots} $\Phi^+_{\bs s}=\{\epsilon_{\sigma_{\bs s}(i)}-\epsilon_{\sigma_{\bs s}(j)}\;|\;i,j=1,\dots,M+N\mbox{ and }i< j\}$. Define the \emph{$\bs s$-positive simple roots}  $\alpha^{\bs s}_{i}=\epsilon_{\sigma_{\bs s}(i)}-\epsilon_{\sigma_{\bs s}(i+1)}$, $i=1,\dots, M+N-1$. Define 
	\[
	e^{\bs s}_{ij}=e_{\sigma_{\bs s}(i),\sigma_{\bs s}(j)},\; i,j=1,\dots,M+N.
	\] 
	The \emph{nilpotent subalgebra} $\mathfrak{n}^+_{\bs s}$ of ${\gl_{M|N}}$ (respectively $\mathfrak{n}^-_{\bs s}$) associated to $\bs s$, is generated by $\{e^{\bs s}_{i,i+1}\;|\;i=1,\dots,M+N-1\}$ (respectively $\{e^{\bs s}_{i+1,i}\;|\; i=1,\dots,M+N-1\}$). The algebra $\mathfrak{n}_{\bs s}^+$ (respectively $\mathfrak{n}_{\bs s}^-$) has a basis $\{e^{\bs s}_{ij}\;|\;i<j\}$ (respectively $\{e^{\bs s}_{ij}\;|\;i>j\}$). The Borel subalgebra associated to $\bs s$, is $\mathfrak{b}_{\bs s}=\mathfrak{h}\oplus\mathfrak{n}^+_{\bs s}$. We call the Borel subalgebra $\mathfrak{b}_{\bs s_0}$ \emph{standard}.
	
	In what follows, many objects depend on a parity sequence $\bs s$. If $\bs s$ is omitted from the notation, then it means the standard parity sequence. For example, we abbreviate $\mathfrak{n}_{\bs s_0}^+$, $\mathfrak{n}_{\bs s_0}^-$, and $\mathfrak{b}_{\bs s_0}$ to $\mathfrak{n}^+$, $\mathfrak{n}^-$, and $\mathfrak{b}$, respectively.
	
	\begin{exmp}
		Consider the case of $\gl_{3|3}$.  Two possible parity sequences from $S_{3|3}$ are: \newline $\bs s_1=(1,1,-1,-1,-1,1)$ and $\bs s_2=(1,-1,1,-1,1,-1)$. We have \[\sigma_{\bs s_1}=\bmx 1 & 2 & 3 & 4 & 5 & 6 \\ 1 & 2 & 4 & 5 & 6 & 3\emx,\quad\sigma_{\bs s_2}=\bmx 1 & 2 & 3 & 4 & 5 & 6 \\ 1 & 4 & 2 & 5 & 3 & 6\emx.\] The $\bs s_1$-positive simple roots and $\bs s_2$-positive simple roots are given respectively by \begin{align*}
		&(\alpha^{\bs s_1}_1,\alpha^{\bs s_1}_2,\alpha^{\bs s_1}_3,\alpha^{\bs s_1}_4,\alpha^{\bs s_1}_5)=(\epsilon_1-\epsilon_2,\epsilon_2-\epsilon_4,\epsilon_4-\epsilon_5,\epsilon_5-\epsilon_6,\epsilon_6-\epsilon_3),\\
		&(\alpha^{\bs s_2}_1,\alpha^{\bs s_2}_2,\alpha^{\bs s_2}_3,\alpha^{\bs s_2}_4,\alpha^{\bs s_2}_5)=(\epsilon_1-\epsilon_4,\epsilon_4-\epsilon_2,\epsilon_2-\epsilon_5,\epsilon_5-\epsilon_3,\epsilon_3-\epsilon_6).
		\end{align*}
	\end{exmp}
	
	We have \[(\alpha^{\bs s}_{i},\alpha^{\bs s}_{j})=(s_i+s_{i+1})\delta_{i,j}-s_i\delta_{i,j+1}-s_{i+1}\delta_{i+1,j}.\] The \emph{symmetrized Cartan matrix} associated to $\bs s$, $\left((\alpha^{\bs s}_{i},\alpha^{\bs s}_{j})\right)_{i,j=1}^{M+N-1}$, is described by the blocks 
	\be 
	\begin{matrix} 
		& \bmx (\alpha^{\bs s}_{i},\alpha^{\bs s}_{i}) & (\alpha^{\bs s}_{i},\alpha^{\bs s}_{i+1}) \\ (\alpha^{\bs s}_{i+1},\alpha^{\bs s}_{i}) & (\alpha^{\bs s}_{i+1},\alpha^{\bs s}_{i+1}) \emx= & \bmx s_i+s_{i+1} & -s_{i+1} \\ -s_{i+1} & s_{i+1}+s_{i+2} \emx.
		\nn 
	\end{matrix}
	\ee
	Explicitly, this block is one of the following cases depending on $(s_i,s_{i+1},s_{i+2})$:
	\be 
	\begin{matrix}  (1,1,1) & (1,1,-1) & (1,-1,1) & (-1,1,1)\\ 
		\bmx 2 & -1 \\ -1 & 2 \emx, & \bmx 2 & -1 \\ -1 & 0 \emx, & \bmx 0 & 1 \\ 1 & 0 \emx, & \bmx 0 & -1 \\ -1 & 2 \emx,\\ \\
		(-1,-1,-1) & (-1,-1,1) & (-1,1,-1) & (1,-1,-1)\\
		\bmx -2 & 1 \\ 1 & -2 \emx, & \bmx -2 & 1 \\ 1 & 0 \emx, & \bmx 0 & -1 \\ -1 & 0 \emx, &\;\; \bmx 0 & 1 \\ 1 & -2 \emx.
		\nn 
	\end{matrix}
	\ee
	
	\subsection{Representations of $\gl_{M|N}$}\label{representations}
	Let $V$ be a $\gl_{M|N}$ module. Given a parity sequence $\bs s\in S_{M|N}$ and a weight $\lambda\in\mathfrak{h}^*$, a non-zero vector $v^{\bs s}_{\lambda}\in V$ is called an \emph{$\bs s$-singular vector} of weight $\lambda$ if $\mathfrak{n}^+_{\bs s}v^{\bs s}_{\lambda}=0$ and $hv^{\bs s}_{\lambda}=\lambda(h)v^{\bs s}_{\lambda}$, for all $h\in\mathfrak{h}$. Denote the subspace of $\bs s$-singular vectors by $V^{\textup{$\bs s$ing}}$. Denote by $V_{\lambda}$ the subspace of vectors of weight $\lambda$, $V_{\lambda}=\{v\in V\;|\;hv=\lambda(h)v\mbox{, for all } h\in\mathfrak{h}\}$. Denote by $V^{\textup{$\bs s$ing}}_{\lambda}$ the subspace of $\bs s$-singular vectors of weight $\lambda$. Denote the subspaces of $\bs s_0$-singular vectors and of $\bs s_0$-singular vectors of weight $\la$ by $V^{sing}$ and $V^{sing}_\la$ respectively. Let $L^{\bs s}(\lambda)$ be the \emph{$\bs s$-highest weight irreducible module} of highest weight $\lambda$, generated by the $\bs s$-singular vector $v^{\bs s}_{\lambda}$. The $\bs s$-singular vector $v^{\bs s}_{\lambda}\in L^{\bs s}(\lambda)$ is called the \emph{$\bs s$-highest weight vector}. Denote by \[\lambda_{[\bs s]}=(\lambda_{[\bs s],1},\dots,\lambda_{[\bs s],M+N})=\left(\lambda(e^{\bs s}_{11}),\dots,\lambda(e^{\bs s}_{M+N,M+N})\right)\] the coordinate sequence of $\lambda$ associated to $\bs s$. We also use the notation $L^{\bs s}(\lambda_{[\bs s]})$ for $L^{\bs s}(\lambda)$.
	
	\begin{exmp}
		The superspace $\C^{M|N}$ is a $\gl_{M|N}$ module with the action given by $e_{ij}e_k=\delta_{j,k}e_i$. We have $\C^{M|N}\cong L^{\bs s}(1,0,\dots,0)=L^{\bs s}(\epsilon_{\sigma_{\bs s}(1)})$ for any $\bs s\in S_{M|N}$. 
		The $\bs s$-highest weight vector is $v^{\bs s}_{\epsilon_{\sigma_{\bs s}(1)}}=e_{\sigma_{\bs s}(1)}$. We call $\C^{M|N}$ the \emph{vector representation}.
	\end{exmp} 
	
	A module $V$ is called a \emph{polynomial module} if it is an irreducible submodule of $(\C^{M|N})^{\otimes n}$ for some $n\in \Z_{\geq 0}$.
	A highest weight module $L(\lambda)$ with respect to the standard Borel subalgebra $\mathfrak{b}$, is a polynomial module if 
and only if 
the weight $\lambda$ satisfies $\lambda_i\in\Z_{\geq 0}$ for all $i$, $\lambda_1\geq\dots\geq\lambda_M$, $\lambda_{M+1}\geq\dots\geq\lambda_{M+N}$, and $\lambda_M\geq \#\{i\;|\; \lambda_{M+i}\neq 0\;|\; i=1,\dots,N\}$. A weight $\lambda$ is called a \emph{polynomial weight} if $L(\lambda)$ is a polynomial module. It is known that the category of polynomial modules is a semisimple tensor category. 
	
	Let $\mu=(\mu_1\geq \mu_2\geq \dots)$ be a partition: $\mu_i\in\Z_{\geq 0}$ and $\mu_i=0$ if $i\gg 0$. The partition $\mu$ is called an \emph{$(M|N)$-hook partition} if $\mu_{M+1}\leq N$. Polynomial modules are parametrized by $(M|N)$-hook partitions.
	
	Let $L(\lambda)$ be a polynomial module with highest weight vector $v_\lambda$. Let $\bs s$ be a parity sequence. Then $L(\lambda)$ is isomorphic to an irreducible $\bs s$-highest weight module $L^{\bm{s}}(\lambda^{\bs s})$. The coordinate sequence $\lambda^{\bs s}_{[\bs s]}$ and the $\bs s$-highest weight vector $v^{\bs s}_\lambda$ can be found recursively as follows.
	
	Let $\bs s^{[i]}=(s_1,\dots,s_{i+1},s_i,\dots,s_{M+N})$ be the parity sequence obtained from $\bs s$ by switching the $i$-th and $(i+1)$-st coordinates. If $s_i\neq s_{i+1}$, then we have
\begin{equation}\label{si}
	\la_{[\bs s^{[i]}]}^{\bs s^{[i]}}=(\la_{[\bs s],1}^{\bs s},\dots,
	\la_{[\bs s],i-1}^{\bs s},  \la_{[\bs s],i+1}^{\bs s}+\delta,\la_{[\bs s],i}^{\bs s}-\delta, \la_{[\bs s],i+2}^{\bs s}, \dots, \la_{[\bs s],M+N}^{\bs s}),\qquad v^{\bs s^{[i]}}_{\la^{\bs s^{[i]}}}=(e^{\bs s}_{i+1,i})^\delta v^{\bs s}_{\la^{\bs s}},
\end{equation}
	where $\delta=1$ if $\la_{[\bs s],i}^{\bs s}+\la_{[\bs s],i+1}^{\bs s}\neq 0$ and $\delta=0$ otherwise.

	The following example illustrates how the coordinate sequence $\lambda^{\bs s}_{[\bs s]}$ can be found from an $(M|N)$-hook partition, and how the $\bs s$-highest weight vector $v^{\bs s}_\lambda$ is related to the highest weight vector $v_\lambda$.
	
	\begin{exmp}\label{ex1} Let $\mu=(7,6,4,3,3)$ be a $(3|3)$-hook partition. Choose some parity sequences:
		\be \bs s_0=(1,1,1,-1,-1,-1),\quad \bs s_1=(1,1,-1,-1,-1,1),\quad \bs s_2=(1,-1,1,-1,1,-1). \nn\ee
		The highest weights and the highest weight vectors for those choices can be read as:
		\be\begin{matrix}
			&\qquad \begin{tikzpicture}[baseline=-25pt,shorten >=-4pt,shorten <= - 4pt,scale=.4,yscale=-1,every node/.style={minimum size=.4cm,inner sep=0mm,draw,gray,rectangle}]
			\foreach \x in {1,2,3,4,5,6,7} {\node at (\x,1) {};}
			\foreach \x in {1,2,3,4,5,6} {\node at (\x,2) {};}
			\foreach \x in {1,2,3,4} {\node at (\x,3) {};}
			\foreach \x in {1,2,3} {\node at (\x,4) {};}
			\foreach \x in {1,2,3} {\node at (\x,5) {};}
			\draw[<->] (1,1) -- (7,1) ;
			\draw[<->] (1,2) -- (6,2) ;
			\draw[<->] (1,3) -- (4,3) ;
			\draw[<->] (1,4) -- (1,5) ;
			\draw[<->] (2,4) -- (2,5) ;
			\draw[<->] (3,4) -- (3,5) ;
			\end{tikzpicture}
			&\qquad \begin{tikzpicture}[baseline=-25pt,shorten >=-4pt,shorten <= - 4pt,
			scale=.4,yscale=-1,every node/.style={minimum size=.4cm,inner sep=0mm,draw,gray,rectangle}]
			\foreach \x in {1,2,3,4,5,6,7} {\node at (\x,1) {};}
			\foreach \x in {1,2,3,4,5,6} {\node at (\x,2) {};}
			\foreach \x in {1,2,3,4} {\node at (\x,3) {};}
			\foreach \x in {1,2,3} {\node at (\x,4) {};}
			\foreach \x in {1,2,3} {\node at (\x,5) {};}
			\draw[<->] (1,1) -- (7,1) ;
			\draw[<->] (1,2) -- (6,2) ;
			\draw[<->] (1,3) -- (1,5) ;
			\draw[<->] (2,3) -- (2,5) ;
			\draw[<->] (3,3) -- (3,5) ;
			\draw[<->] (3.99,3) -- (4.01,3);
			\end{tikzpicture}
			&\qquad \begin{tikzpicture}[baseline=-25pt,shorten >=-4pt,shorten <= - 4pt,scale=.4,yscale=-1,every node/.style={minimum size=.4cm,inner sep=0mm,draw,gray,rectangle}]
			\foreach \x in {1,2,3,4,5,6,7} {\node at (\x,1) {};}
			\foreach \x in {1,2,3,4,5,6} {\node at (\x,2) {};}
			\foreach \x in {1,2,3,4} {\node at (\x,3) {};}
			\foreach \x in {1,2,3} {\node at (\x,4) {};}
			\foreach \x in {1,2,3} {\node at (\x,5) {};}
			\draw[<->] (1,1) -- (7,1) ;
			\draw[<->] (1,2) -- (1,5) ;
			\draw[<->] (2,2) -- (6,2) ;
			\draw[<->] (2,3) -- (2,5) ;
			\draw[<->] (3,3) -- (4,3) ;
			\draw[<->] (3,4) -- (3,5) ;
			\end{tikzpicture}\\
			& \lambda^{\bs s_0}_{[\bs s_0]}=(7,6,4,2,2,2) &\lambda^{\bs s_1}_{[\bs s_1]}=(7,6,3,3,3,1) & \lambda^{\bs s_2}_{[\bs s_2]}=(7,4,5,3,2,2)\\
			& v^{\bs s_0}_{\lambda^{\bs s_0}}=v_\lambda  ,          & v^{\bs s_1}_{\lambda^{\bs s_1}}=e_{63}e_{53}e_{43}v_\lambda ,& v^{\bs s_2}_{\lambda^{\bs s_2}}=e_{53}e_{42}e_{43}v_\lambda.
			\nn
		\end{matrix} \ee
	\end{exmp}
	Another way to find $\lambda^{\bs s}_{[\bs s]}$ from $\la$ is given below in Theorem \ref{change of T}.

	Define the \emph{$\bs s$-Weyl weight} \[\rho^{\bs s}=\frac{1}{2}\sum_{\substack{\alpha\in\Phi^+_{\bs s}\\\alpha\mbox{ is even}}}\alpha-\frac{1}{2}\sum_{\substack{\beta\in\Phi^+_{\bs s}\\\beta\mbox{ is odd}}}\beta.\] 
	
	A weight $\lambda$ is called \emph{typical} if $(\lambda+\rho^{\bs s_0},\alpha)\neq0$, for any odd root $\alpha$. Otherwise $\lambda$ is called \emph{atypical}. The module $L(\lambda)$ is \emph{typical} if $\lambda$ is typical and \emph{atypical} otherwise. If $\lambda$ is a polynomial weight, then $\la$ is typical if and only if $\la(e_{MM})\geq N$. Let $\mu=(\mu_1,\mu_2,\dots)$
	be the $(M|N)$-hook partition that parametrizes $L(\lambda)$. Then $L(\lambda)$ is typical if and only if $\mu_M\geq N$. In Example \ref{ex1}, all weights are typical.
	
	\section{Rational pseudodifferential operators and flag varieties}\label{sec: rpdos}
	We establish some generalities about ratios of differential operators.
	
	\subsection{Rational pseudodifferential operators}
	We recall some results from \cite{CDSK12} and \cite{CDSK12b}.
	
	Let $\mc K$ be a differential field of characteristic zero, with the derivation $\partial$. The main example for this paper is the field of complex-valued rational functions $\mc K=\C(x)$.
	
	Consider the division ring of \emph{pseudodifferential operators} $\mc K\left((\partial^{-1})\right)$. An element $A\in\mc K\left((\partial^{-1})\right)$ has the form 
	\[A=\sum_{j=-\infty}^{M}a_{j}\partial^{j},\mbox{ }a_j\in\mc K,\mbox{ }M\in\Z.\] 
	One says that $A$ has \emph{order} $M$, $\ord A=M$, if $a_M\neq0$. One says that $A$ is \emph{monic} if $a_M=1$.
	
	We have the following relations in $\mc K\left((\partial^{-1})\right)$:
	\begin{align*}
	\partial\partial^{-1}=\partial^{-1}\partial=1,&\\
	\partial^{r}a=\sum_{j=0}^{\infty}\binom{r}{j}a^{(j)}\partial^{r-j},&\;a\in\mc K,\;r\in\Z,
	\end{align*} where $a^{(j)}$ is the $j$-th derivative of $a$ and $a^{(0)}=a$. 
	
	All nonzero elements in $\mc K\left((\partial^{-1})\right)$ are invertible. The inverse of $A$ is given by
	\[A^{-1}=\partial^{-M}\sum_{r=0}^{\infty}\Big(-\sum_{j=-\infty}^{-1}a_{M}^{-1}a_{j+M}\partial^{j}\Big)^{r}a_{M}^{-1}.\]
	
	The algebra of differential operators $\mc K[\partial]$ is a subring of $\mc K\left((\partial^{-1})\right)$. 
	
	Let $D\in\mc K[\partial]$ be a monic differential operator.  The differential operator $D$ is called \emph{completely factorable over $\mc K$} if $D=d_1\dots d_M$, where $d_i=\partial-a_i$, $a_i\in\mc K$, $i=1,\dots,M$. 
	
	Denote $\{u\in\mc K\;|\;Du=0\}$ by $\ker D$. Clearly, if $\dim\left(\ker D\right)=\ord D$, then $D$ is completely factorable over $\mc K$; see also Section \ref{factorization sec}.
	
	\medskip

	The division subring $\mc K(\partial)$ of $\mc K\left((\partial^{-1})\right)$, generated by $\mc K[\partial]$, is called the \emph{division ring of rational pseudodifferential operators} and elements in $\mc K(\partial)$ are called \emph{rational pseudodifferential operators}.

	Let $R$ be a rational pseudodifferential operator. If we can write $R=D_{\bar 0}D_{\bar 1}^{-1}$ for some $D_{\bar 0},D_{\bar 1}\in\mc K[\partial]$, then this is called a \emph{fractional factorization} of $R$. A fractional factorization $R=D_{\bar 0}D_{\bar 1}^{-1}$ is called \emph{minimal} if $D_{\bar 1}$ is monic and has the minimal possible order. 
	
	\begin{prop}{\cite{CDSK12b}}\label{CDSK12b} Let $R\in\mc K(\partial)$ be a rational pseudodifferential operator. Then the following is true.
		\begin{enumerate}
		\item There exists a unique minimal fractional factorization of $R$.
		\item Let $R=D_{\bar 0}D_{\bar 1}^{-1}$ be the minimal fractional factorization. If $R=\widetilde{D}_{\bar 0}\widetilde{D}_{\bar 1}^{-1}$ is a fractional factorization, then there exists $D\in\mc K[\partial]$ such that $\widetilde{D}_{\bar 0}=D_{\bar 0}D$ and $\widetilde{D}_{\bar 1}=D_{\bar 1}D$.
		\item Let $R=D_{\bar 0}D_{\bar 1}^{-1}$ be a fractional factorization such that $\dim\left(\ker D_{\bar 0}\right)=\ord D_{\bar 0} $ and $\dim\left(\ker D_{\bar 1} \right)=\ord D_{\bar 1} $. Then $R=D_{\bar 0}D_{\bar 1}^{-1}$ is the minimal fractional factorization of $R$ if and only if $\ker D_{\bar 0}\cap\ker D_{\bar 1}=0$.
		\end{enumerate}\qed
	\end{prop}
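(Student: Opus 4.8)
The plan is to exploit the fact that $\mc K[\partial]$ is a (left and right) principal ideal domain, Euclidean for both left and right division with respect to $\ord$; in particular it is a two-sided Ore domain whose skew field of fractions is exactly $\mc K(\partial)$. Consequently every $R\in\mc K(\partial)$ can be written as a right fraction $R=AB^{-1}$ with $A,B\in\mc K[\partial]$, which is what gives the existence of a fractional factorization in the first place.

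For (1) and (2) I would organize everything around the \emph{denominator ideal}
$$\mathcal I=\{B\in\mc K[\partial]\ |\ RB\in\mc K[\partial]\}.$$
One checks that $\mathcal I$ is a right ideal of $\mc K[\partial]$, and that it is nonzero since $R=AB^{-1}$ gives $RB=A\in\mc K[\partial]$, i.e. $B\in\mathcal I$. Right ideals of $\mc K[\partial]$ are principal: choosing $E\in\mathcal I$ of minimal order and using \emph{left} division $A=EQ+R'$ with $\ord R'<\ord E$ shows $R'=A-EQ\in\mathcal I$, whence $R'=0$ and $\mathcal I=E\mc K[\partial]$. Normalizing, there is a unique monic generator $D_{\bar 1}$, and setting $D_{\bar 0}:=RD_{\bar 1}\in\mc K[\partial]$ gives a fractional factorization $R=D_{\bar 0}D_{\bar 1}^{-1}$. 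If $R=\widetilde D_{\bar 0}\widetilde D_{\bar 1}^{-1}$ is any fractional factorization then $\widetilde D_{\bar 1}\in\mathcal I$, so $\widetilde D_{\bar 1}=D_{\bar 1}D$ for some $D\in\mc K[\partial]$ and hence $\widetilde D_{\bar 0}=R\widetilde D_{\bar 1}=D_{\bar 0}D$; this is precisely statement (2). Taking $\widetilde D_{\bar 1}$ monic gives $\ord\widetilde D_{\bar 1}=\ord D_{\bar 1}+\ord D\ge\ord D_{\bar 1}$, with equality iff $D\in\mc K^\times$, which forces $\widetilde D_{\bar 1}=D_{\bar 1}$; this yields both existence and uniqueness in (1).

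For (3) the point is to translate the algebraic statement ``$D_{\bar 0}$ and $D_{\bar 1}$ have a common right factor of positive order'' into the kernel condition, using the maximality hypotheses $\dim\ker D_{\bar 0}=\ord D_{\bar 0}$ and $\dim\ker D_{\bar 1}=\ord D_{\bar 1}$. By (2), the factorization is minimal iff the $D$ appearing in $\widetilde D_{\bar\bullet}=D_{\bar\bullet}^{\min}D$ is a unit, equivalently iff $D_{\bar 0},D_{\bar 1}$ share no common right factor of positive order. If they do share such a factor $E$, then $\ker E\subseteq\ker D_{\bar 0}\cap\ker D_{\bar 1}$; moreover a right factor of a full–kernel operator is again full–kernel, since from $D_{\bar 0}=AE$ and the exact sequence $0\to\ker E\to\ker D_{\bar 0}\xrightarrow{E}\ker A$ the inequalities $\ord D_{\bar 0}=\dim\ker D_{\bar 0}\le\dim\ker E+\dim\ker A\le\ord E+\ord A=\ord D_{\bar 0}$ are forced to be equalities, giving $\dim\ker E=\ord E>0$ and a nonzero intersection. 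Conversely, any $0\ne u\in\ker D_{\bar 0}\cap\ker D_{\bar 1}$ produces the order–one common right factor $\partial-\ln' u$ (left division of $D_{\bar\bullet}$ by $\partial-\ln' u$ has remainder $r\in\mc K$ with $ru=D_{\bar\bullet}u=0$, so $r=0$). Hence minimality is equivalent to $\ker D_{\bar 0}\cap\ker D_{\bar 1}=0$.

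The main obstacle is the noncommutative bookkeeping: one must consistently use \emph{left} Euclidean division to obtain principality of the \emph{right} denominator ideal $\mathcal I$, and keep careful track of the monic normalization when passing between a factorization and the generator of $\mathcal I$. The only genuinely ``differential'' input — that an order–$k$ operator has at most a $k$–dimensional kernel in $\mc K$, and that $u\in\ker D_{\bar\bullet}$ yields the right factor $\partial-\ln'u$ — is classical, and it is the sole place where the differential, rather than purely ring-theoretic, structure of $\mc K[\partial]$ is used.
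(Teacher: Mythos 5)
The paper does not prove this proposition at all: it is imported verbatim from the reference [CDSK12b] with a \verb|\qed|, so there is no internal argument to compare against. Your proof is correct and self-contained, and it is essentially the standard argument of the cited source: parts (1)--(2) via the principality of the right denominator ideal $\mathcal I=\{B\;|\;RB\in\mc K[\partial]\}$ (which rests on $\mc K[\partial]$ being Euclidean, hence a right PID and a two-sided Ore domain, so that right fractions $AB^{-1}$ exhaust $\mc K(\partial)$), and part (3) by translating minimality into the absence of a common right factor of positive order and then into the kernel condition. The two delicate points are both handled properly: the dimension count $\ord D_{\bar 0}=\dim\ker D_{\bar 0}\le\dim\ker E+\dim\ker A\le\ord E+\ord A$ correctly uses the full-kernel hypothesis to force $\dim\ker E=\ord E>0$ (without that hypothesis a positive-order common factor could have trivial kernel in $\mc K$, and the ``if'' direction of (3) would fail), and the converse correctly extracts the order-one common right factor $\partial-\ln'u$ from a nonzero $u$ in the intersection. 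The only implicit ingredients worth flagging are that dimensions of kernels are taken over the field of constants of $\mc K$, and that the bound $\dim\ker A\le\ord A$ is the classical Wronskian argument; both are standard and you acknowledge them.
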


	We call $R$ an \emph{$(M|N)$-rational pseudodifferential operator} if for the minimal fractional factorization
	$R=D_{\bar 0}D_{\bar 1}^{-1}$ we have $\ord(D_{\bar 0})=M$ and $\ord(D_{\bar 1})=N$.

	Let $R$ be a monic $(M|N)$-rational pseudodifferential operator. Let $\bs s\in S_{M|N}$ be a parity sequence. The form $R=d_1^{s_1}\dots d_{M+N}^{s_{M+N}}$, where $d_i=\partial-a_i$, $a_i\in\mc K$, $i=1,\dots,M+N$, is called the \emph{complete factorization with the parity sequence $\bs s$}. We denote the set of all complete factorizations of $R$ by $\mc F(R)$ and the set of all complete factorizations of $R$ with parity sequence $\bs s$ by $\mc F^{\bs s}(R)$.

	Let  $R_1=(\partial-a)(\partial-b)^{-1}$ and $R_2=(\partial-c)^{-1}(\partial-d)$ be two $(1|1)$-rational pseudodifferential operators. Here $a,b,c,d\in\mc K$, $a\neq b$, and $c\neq d$. Then  $R_1=R_2$ if and only if
	\begin{equation}\label{Ore}
	\begin{matrix}
		& \begin{cases}
		 c=b+\ln'(a-b),\\
		 d=a+\ln'(a-b),
		 \end{cases} 
		& \mbox{ or equivalently }
		& \begin{cases}
		a=d-\ln'(c-d),\\
		b=c-\ln'(c-d),
		\end{cases}
	\end{matrix}
	\end{equation}
	where $\ln'(f)=f'/f$ stands for the logarithmic derivative.
	
	Let $R$ be an $(M|N)$-rational pseudodifferential operator.
	Let $R=d_1^{s_1}\dots d_{M+N}^{s_{M+N}}$,  $d_i=\partial-a_i$, be a complete factorization.
	Suppose $s_i\neq s_{i+1}$. Then $d_i\neq d_{i+1}$. We use equation \eqref{Ore} to construct $\widetilde{d}_i$ and $\widetilde{d}_{i+1}$ such that $d_i^{s_i}d_{i+1}^{s_{i+1}}=\widetilde{d}_i^{\,s_{i+1}}\widetilde{d}_{i+1}^{\,s_i}$. That gives a complete factorization of $R=d_1^{s_1}\dots \widetilde{d}_i^{\,s_{i+1}}\widetilde{d}_{i+1}^{\,s_i}\dots d_{M+N}^{s_{M+N}}$ with the new parity sequence $\widetilde{\bs s}=\bs s^{[i]}=(s_1,\dots,s_{i+1},s_i,\dots,s_{M+N})$.  
	
	Repeating this procedure, we obtain a canonical identification of the set $\mc F^{\bs s}(R)$ of complete factorizations of $R$ with parity sequence $\bs s$ with the set $\mc F^{\bs s_0}(R)$ of complete factorizations of $R$ with parity sequence $\bs s_0$.

	\subsection{Complete factorizations of rational pseudodifferential operators and flag varieties}\label{factorization sec}
	Let $W=W_{\bar{0}}\bigoplus W_{\bar{1}}$ be a vector superspace such that $\dim(W_{\bar{0}})=M$ and $\dim(W_{\bar{1}})=N$. A full flag in $W$ is a chain of subspaces $\mc F=\{F_1\subset F_2\subset\dots\subset F_{M+N}=W\}$ such that $\dim F_i=i$. 
	 Any basis  $\{w_1,\dots,w_{M+N}\}$ of $W$ generates a full flag by the rule $F_i={\rm span}(w_1,\dots,w_i)$.
(By basis, we mean always ordered basis.)
	A full flag is called a \emph{full superflag} if it is generated by a homogeneous basis. We denote by $\mc F(W)$ the set of all full superflags. 
	
	If $M=0$ or $N=0$, then every full flag is a full superflag. Thus, in this case $\mc F(W)$ is the usual flag variety.
	
	To a given homogeneous basis $\{w_1,\dots,w_{M+N}\}$ of $W$, we associate a parity sequence $\bs s\in S_{M|N}$ by the rule $s_i=(-1)^{|w_i|}$, $i=1,\dots,M+N$. We say a full superflag $\mc F$ has parity sequence $\bs s$ if it is generated by a homogenous basis associated to $\bs s$. We denote by $\mc F^{\bs s}(W)$ the set of all full superflags of parity $\bs s$.
	
The following lemma is obvious. 
	
	\begin{lem} We have
	\[
\mc F(W)=	\bigsqcup_{\bm{s}\in S_{M|N}} \mc F^{\bs s}(W), \qquad 
\mc F^{\bs s}(W)=\mc{F}\left(W_{\bar{0}}\right)\times \mc{F}\left(W_{\bar{1}}\right).\] \qed
	\end{lem}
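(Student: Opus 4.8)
The plan is to show that a full superflag determines its parity sequence intrinsically, independently of the homogeneous basis used to generate it; both equalities then follow immediately. First I would observe that if $\mc F=\{F_1\subset\dots\subset F_{M+N}\}$ is generated by a homogeneous basis $\{w_1,\dots,w_{M+N}\}$, then each $F_i=\Span(w_1,\dots,w_i)$ is spanned by homogeneous vectors and is therefore a graded subspace, $F_i=(F_i)_{\bar 0}\oplus(F_i)_{\bar 1}$. In particular the integers $\dim(F_i)_{\bar 0}$ and $\dim(F_i)_{\bar 1}$ depend only on $\mc F$, not on the chosen basis.

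Next, since $\dim(F_i/F_{i-1})=1$ and the image of $w_i$ spans this quotient, exactly one of the two graded dimensions increases by one in passing from $i-1$ to $i$: the even one if $|w_i|=\bar 0$ and the odd one if $|w_i|=\bar 1$. Hence
\[
s_i=1\iff \dim(F_i)_{\bar 0}=\dim(F_{i-1})_{\bar 0}+1,\qquad s_i=-1\iff \dim(F_i)_{\bar 1}=\dim(F_{i-1})_{\bar 1}+1,
\]
which shows that the parity sequence $\bs s$ is an invariant of $\mc F$, independent of the generating homogeneous basis. Since $\dim(F_{M+N})_{\bar 0}=M$, the value $1$ occurs exactly $M$ times, so $\bs s\in S_{M|N}$. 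This well-definedness is the only point requiring an argument; it at once yields the first equality $\mc F(W)=\bigsqcup_{\bs s}\mc F^{\bs s}(W)$, the union being disjoint precisely because a single flag cannot carry two distinct parity sequences.

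For the second equality I would fix $\bs s$ and exhibit mutually inverse maps. Given $\mc F\in\mc F^{\bs s}(W)$, the chains $(F_i)_{\bar 0}$ and $(F_i)_{\bar 1}$, with repetitions deleted, form full flags in $W_{\bar 0}$ and $W_{\bar 1}$ respectively (of lengths $M$ and $N$, by the dimension count above), producing a pair in $\mc F(W_{\bar 0})\times\mc F(W_{\bar 1})$. Conversely, from full flags $\{G_1\subset\dots\subset G_M\}$ in $W_{\bar 0}$ and $\{H_1\subset\dots\subset H_N\}$ in $W_{\bar 1}$ (with $G_0=H_0=0$) I reconstruct a superflag of parity $\bs s$ by the prescribed interleaving $F_i=G_{a_i}\oplus H_{b_i}$, where $a_i=\#\{j\le i\mid s_j=1\}$ and $b_i=\#\{j\le i\mid s_j=-1\}$; here $a_i+b_i=i$ so $\dim F_i=i$, and $F_{M+N}=G_M\oplus H_N=W$. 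The fixed sequence $\bs s$ dictates which component grows at each step, so the two assignments are inverse to one another, giving $\mc F^{\bs s}(W)=\mc F(W_{\bar 0})\times\mc F(W_{\bar 1})$.

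There is no serious obstacle here: the entire content is the observation that homogeneous generation forces every $F_i$ to be a graded subspace, after which the parity sequence simply records the fixed pattern in which the even and odd dimensions increase, and $\bs s$ merely encodes how the two component flags are shuffled together. This is exactly why the lemma is stated as obvious.
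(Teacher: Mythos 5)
Your argument is correct and is precisely the elaboration of what the paper declares obvious and leaves unproved: the key observation that homogeneous generation makes each $F_i$ a graded subspace, so the parity sequence is an invariant of the flag, and the interleaving bijection with pairs of flags then follows. Nothing is missing.
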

	
Let $R$ be a monic $(M|N)$-rational pseudodifferential operator over $\mc K$. Let $R=D_{\bar 0}D_{\bar 1}^{-1}$ be the minimal fractional factorization of $R$. Assume that $\dim\left(\ker D_{\bar 0} \right)=M$, and $\dim\left(\ker D_{\bar 1} \right)=N$. 
	
	Let $V=W_{\bar 0}=\ker D_{\bar 0} $, $U=W_{\bar 1}=\ker D_{\bar 1} $, $W=W_{\bar 0}\oplus W_{\bar 1}$.
	
	Given a basis $\{v_1,\dots, v_M\}$ of $V$, a basis $\{u_1,\dots, u_N\}$ of $U$, and a parity sequence $\bs s\in S_{M|N}$, define a homogeneous basis $\{w_1,\dots, w_{M+N}\}$ of $W$ by the rule $w_i=v_{\bs s^+_i+1}$ if $s_i=1$ and 
	$w_i=u_{\bs s^-_i+1}$ if $s_i=-1$. Conversely, any homogeneous basis of $W$ gives a basis of $V$, a basis of $U$, and a parity sequence $\bs s$.
	
	\begin{exmp}
	If $\bs s=(1,-1,-1,1,1,-1,1,-1)$, then $\{w_1,\dots,w_8\}=\{v_4,u_1,u_2,v_3,v_2,u_3,v_1,u_4\}$.
	\end{exmp}
	
	Given a basis $\{v_1,\dots, v_M\}$ of $V$, a basis $\{u_1,\dots, u_N\}$ of $U$, and a parity sequence $\bs s\in S_{M|N}$, define  
	$d_i=d_i(\bs s,\{v_1,\dots, v_M\},\{u_1,\dots, u_N\})=\partial-a_i$ where
	\begin{align}
	 &a_i= \ln'  \frac{\Wr(v_1,v_2,\dots,v_{\bs s_i^++1},u_1,u_2,\dots,u_{\bs s_i^-})}{\Wr(v_1,v_2,\dots,v_{\bs s_i^+},u_1,u_2,\dots,u_{\bs s_i^-})} \qquad {\rm if}\ s_i=1, \label{wronskian of rdo1}\\ &a_i= \ln'  \frac{\Wr(v_1,v_2,\dots,v_{\bs s_i^+},u_1,u_2,\dots,u_{\bs s_i^-+1})}{\Wr(v_1,v_2,\dots,v_{\bs s_i^+},u_1,u_2,\dots,u_{\bs s_i^-})}  \qquad {\rm if}\ s_i=-1,\label{wronskian of rdo2}
\end{align}
where the Wronskian is given by the standard formula
$$
\Wr(f_1,\dots,f_r)=\det \left( f_j^{(i-1)}  \right)_{i,j=1}^r.
$$
If two bases $\{v_1,\dots, v_M\}$, $\{\wt{v}_1,\dots, \wt{v}_M\}$ generate the same full flag of $V$ and two bases $\{u_1,\dots, u_N\}$, $\{\wt{u}_1,\dots, \wt{u}_N\}$ generate the same full flag of $U$, then the coefficients $a_i$ computed from $v_j,u_j$ and from $\wt{v}_j,\wt{u}_j$ coincide.

\begin{prop}\label{factor}
We have a complete decomposition of $R$ with parity $\bs s$: $R=d_1^{s_1}\dots d_{M+N}^{s_{M+N}}$.
\end{prop}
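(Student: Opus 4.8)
The plan is to prove the identity $R = d_1^{s_1}\cdots d_{M+N}^{s_{M+N}}$ by induction on the number of inversions of $\bs s$ relative to the standard sequence $\bs s_0$, moving one adjacent transposition at a time. The base case $\bs s=\bs s_0$ will be the classical Wronskian factorization of a scalar differential operator, and each inductive step will be a purely local computation on two neighbouring factors, governed by the Ore relation \eqref{Ore}. Throughout, I note that minimality of $R=D_{\bar 0}D_{\bar 1}^{-1}$ forces $V\cap U=0$ by Proposition \ref{CDSK12b}(3); hence $\{v_1,\dots,v_M,u_1,\dots,u_N\}$ is linearly independent over the constants, every ``mixed'' Wronskian appearing in \eqref{wronskian of rdo1}--\eqref{wronskian of rdo2} is nonzero, and all the $a_i$ are well defined.

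For the base case $\bs s=\bs s_0$: for $i\le M$ one has $\bs s_i^-=0$ and $\bs s_i^+=M-i$, so \eqref{wronskian of rdo1} gives $a_i=\ln'\big(\Wr(v_1,\dots,v_{M-i+1})/\Wr(v_1,\dots,v_{M-i})\big)$, and reading $d_1\cdots d_M$ from left to right recovers exactly the standard Wronskian factorization of the monic operator with kernel $V$, so $d_1\cdots d_M=D_{\bar 0}$. Similarly, for $i>M$ one has $\bs s_i^+=0$ and $\bs s_i^-=i-M-1$, and \eqref{wronskian of rdo2} produces the Wronskian factorization of the monic operator with kernel $U$, but with the factors inverted and in the opposite order: $d_{M+1}^{-1}\cdots d_{M+N}^{-1}=\big(d_{M+N}\cdots d_{M+1}\big)^{-1}=D_{\bar 1}^{-1}$. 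Multiplying yields $d_1^{s_1}\cdots d_{M+N}^{s_{M+N}}=D_{\bar 0}D_{\bar 1}^{-1}=R$.

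For the inductive step, suppose $s_i\ne s_{i+1}$ and the claim holds for $\bs s^{[i]}$; I will show the Wronskian factorizations for $\bs s$ and for $\bs s^{[i]}$ give the same operator. First, for every $j\ne i,i+1$ the numbers $\bs s_j^{\pm}$ are unchanged under swapping positions $i,i+1$, since swapping two neighbours alters neither the multiset of signs to the left of $j$ nor that to the right; hence $a_j$ coincides for $\bs s$ and $\bs s^{[i]}$, and it suffices to treat the two middle factors. Taking $s_i=1$, $s_{i+1}=-1$ (the opposite case is symmetric via the second line of \eqref{Ore}), set $p=\bs s_i^+$, $q=\bs s_i^-$ and abbreviate
\[
A=\Wr(v_1,\dots,v_p,u_1,\dots,u_q),\qquad B=\Wr(v_1,\dots,v_{p+1},u_1,\dots,u_q),
\]
\[
C=\Wr(v_1,\dots,v_p,u_1,\dots,u_{q+1}),\qquad E=\Wr(v_1,\dots,v_{p+1},u_1,\dots,u_{q+1}).
\]
Reading off \eqref{wronskian of rdo1}--\eqref{wronskian of rdo2} gives $a_i=\ln'(B/A)$, $a_{i+1}=\ln'(C/A)$ for $\bs s$, and $\widetilde a_i=\ln'(E/B)$, $\widetilde a_{i+1}=\ln'(E/C)$ for $\bs s^{[i]}$. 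By \eqref{Ore} the two factorizations agree precisely when $\widetilde a_i=a_{i+1}+\ln'(a_i-a_{i+1})$ and $\widetilde a_{i+1}=a_i+\ln'(a_i-a_{i+1})$.

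The crux is to verify these two identities, and this is the step I expect to be the main obstacle; the rest is bookkeeping. Since $a_i-a_{i+1}=\ln'(B)-\ln'(C)=-\Wr(B,C)/(BC)$, the key input is the bilinear Wronskian identity
\[
\Wr\big(\Wr(\bs f,g),\Wr(\bs f,h)\big)=\Wr(\bs f)\,\Wr(\bs f,g,h),
\]
applied with $\bs f=(v_1,\dots,v_p,u_1,\dots,u_q)$, $g=v_{p+1}$, $h=u_{q+1}$; after reordering columns (which only introduces an overall constant sign) this reads $\Wr(B,C)=\pm AE$. Hence $a_i-a_{i+1}=\mp AE/(BC)$, so $\ln'(a_i-a_{i+1})=\ln'A+\ln'E-\ln'B-\ln'C$, and substituting into the two target identities verifies them by cancellation. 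Finally, since any $\bs s$ is connected to $\bs s_0$ by a chain of adjacent transpositions of unequal neighbours, induction along such a chain gives $d_1^{s_1}\cdots d_{M+N}^{s_{M+N}}=R$ for all $\bs s$.
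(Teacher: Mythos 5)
Your proposal is correct and follows essentially the same route as the paper: the base case is the classical Wronskian factorization for the standard parity sequence, and the inductive step reduces to the two adjacent factors and is settled by the bilinear Wronskian identity $\Wr(\Wr(\bs f,g),\Wr(\bs f,h))=\Wr(\bs f)\Wr(\bs f,g,h)$, which is exactly the identity invoked in the paper's proof. Your version merely spells out the details (the explicit computation of the $a_i$ in the base case and the verification of the Ore relation \eqref{Ore}) that the paper leaves implicit.
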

\begin{proof}
If $\bs s=\bs s_0$ is standard, then the statement of the proposition is well known: see for example the Appendix in \cite{MVpopulations}.

Let $\bs s$ and $\widetilde {\bs s}$ differ only in positions $i, i+1$: $s_j=\wt {s}_j$ for $j\neq i, i+1$ and $s_i=-s_{i+1}=-\wt {s}_i=\wt {s}_{i+1}$.
Then we have $d_j=\wt {d}_j$ for $j\neq i, i+1$. In addition $d_i^{s_i}d_{i+1}^{s_{i+1}}=\widetilde {d}_i^{\ \wt{s}_i}\wt{d}_{i+1}^{\ \wt{s}_{i+1}}$ follows from the Wronski identity
\begin{align*}
&\Wr\left(\Wr(v_1,v_2,\dots,v_{\bs s_i^++1},u_1,u_2,\dots,u_{\bs s_i^-}),\Wr(v_1,v_2,\dots,v_{\bs s_i^+},u_1,u_2,\dots,u_{\bs s_i^-+1})\right) \\
&=
\Wr(v_1,v_2,\dots,v_{\bs s_i^++1},u_1,u_2,\dots,u_{\bs s_i^-+1})\Wr(v_1,v_2,\dots,v_{\bs s_i^+},u_1,u_2,\dots,u_{\bs s_i^-}).
\end{align*}
\end{proof}

We identify full superflags in $W$ with complete factorizations of $R$. 
Namely, by Proposition \ref{factor} we have a map: $\rho: \mc F(W) \to \mc F(R)$ and $\rho^{\bs s}: \mc F^{\bs s}(W) \to \mc F^{\bs s}(R)$.
\begin{prop}\label{flag and factorization}
The maps $\rho$, $\rho^{\bs s}$ are bijections.
\end{prop}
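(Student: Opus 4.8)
The plan is to reduce everything to the standard parity sequence $\bs s_0$, where the problem splits into two independent classical statements about ordinary differential operators. Since a complete factorization of $R$ carries a well-defined parity sequence, we have $\mc F(R)=\bigsqcup_{\bs s}\mc F^{\bs s}(R)$, matching the decomposition of $\mc F(W)$ from the preceding Lemma; as $\rho$ sends $\mc F^{\bs s}(W)$ into $\mc F^{\bs s}(R)$, it is enough to prove that each $\rho^{\bs s}$ is a bijection.

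First I would treat the standard case $\bs s=\bs s_0$. A complete factorization with parity $\bs s_0$ has the form $R=(\partial-a_1)\cdots(\partial-a_M)\,\big((\partial-a_{M+N})\cdots(\partial-a_{M+1})\big)^{-1}$. Writing $A=(\partial-a_1)\cdots(\partial-a_M)$ and $B=(\partial-a_{M+N})\cdots(\partial-a_{M+1})$, this is a fractional factorization $R=AB^{-1}$ with $B$ monic of order $N$; by Proposition \ref{CDSK12b}(2) and minimality of $\ord D_{\bar 1}=N$ there is $D\in\mc K[\partial]$ with $A=D_{\bar 0}D$ and $B=D_{\bar 1}D$, and comparing orders forces $D=1$. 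Hence $A=D_{\bar 0}$ and $B=D_{\bar 1}$, so $\mc F^{\bs s_0}(R)$ is precisely the product of the sets of complete factorizations of the ordinary operators $D_{\bar 0}$ and $D_{\bar 1}$ into linear factors. For each of these the kernel is maximal ($M$, resp.\ $N$), so these factorizations correspond bijectively to full flags in $\ker D_{\bar 0}=V$, resp.\ $\ker D_{\bar 1}=U$ (the classical correspondence; see the Appendix of \cite{MVpopulations}). Specializing \eqref{wronskian of rdo1}--\eqref{wronskian of rdo2} to $\bs s_0$ one checks that the coefficients $a_i$ depend only on the $v$'s for $i\le M$ and only on the $u$'s for $i>M$, so $\rho^{\bs s_0}$ is exactly the product of these two flag-to-factorization bijections under the identification $\mc F^{\bs s_0}(W)\cong\mc F(V)\times\mc F(U)$; in particular it is a bijection onto $\mc F^{\bs s_0}(R)$.

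Next I would pass from $\bs s_0$ to an arbitrary $\bs s$ using the two canonical identifications already available: on the target, the identification $\mc F^{\bs s}(R)\cong\mc F^{\bs s_0}(R)$ built from the Ore moves \eqref{Ore}, and on the source the identification $\mc F^{\bs s}(W)\cong\mc F(V)\times\mc F(U)\cong\mc F^{\bs s_0}(W)$ from the preceding Lemma (every superflag is determined by its induced flags on $V$ and $U$, independently of $\bs s$). The crucial point is that $\rho^{\bs s}$ intertwines these identifications with $\rho^{\bs s_0}$, i.e.\ the square with horizontal maps $\rho^{\bs s}$, $\rho^{\bs s_0}$ and vertical identifications commutes. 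It suffices to check this for an adjacent transposition relating $\bs s$ and $\bs s^{[i]}$, and there it is exactly the computation in the proof of Proposition \ref{factor}: starting from one fixed pair of bases of $V$ and $U$, the factorizations obtained with parities $\bs s$ and $\bs s^{[i]}$ represent the same $R$, agree in all slots other than $i,i+1$, and in slots $i,i+1$ are related by the Wronski identity, which is precisely the Ore move \eqref{Ore} defining the identification on $\mc F(R)$. Granting commutativity, both vertical maps and $\rho^{\bs s_0}$ are bijections, so $\rho^{\bs s}$ is a bijection for every $\bs s$; assembling over $S_{M|N}$ shows that $\rho$ is a bijection.

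I expect the main obstacle to be the compatibility asserted in the last paragraph: one must verify that the move on the superflag side (which changes the parity of the superflag while keeping its induced $V$- and $U$-flags fixed, i.e.\ acts as the identity on $\mc F(V)\times\mc F(U)$) corresponds under $\rho$ to the Ore move on the factorization side, and that this is consistent along any path of adjacent transpositions from $\bs s$ to $\bs s_0$. The per-transposition statement is supplied by Proposition \ref{factor}, and path-independence follows because the target identification $\mc F^{\bs s}(R)\cong\mc F^{\bs s_0}(R)$ was already established to be canonical; the remaining work is purely bookkeeping of the Wronskian coefficients.
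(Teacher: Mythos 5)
Your proof is correct and follows essentially the same route as the paper: establish that $\rho^{\bs s_0}$ is a bijection (the classical flag--factorization correspondence applied separately to $D_{\bar 0}$ and $D_{\bar 1}$), then transfer to arbitrary $\bs s$ via the canonical identifications $\mc F^{\bs s}(W)\cong\mc F^{\bs s_0}(W)$ and $\mc F^{\bs s}(R)\cong\mc F^{\bs s_0}(R)$, checking compatibility one adjacent transposition at a time using the Wronski identity from Proposition \ref{factor}. The paper's own proof is a four-line sketch of exactly this argument; you have merely supplied the details (notably the use of Proposition \ref{CDSK12b} to identify the $\bs s_0$-factorization with the minimal fractional factorization) that it leaves implicit.
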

\begin{proof}
Clearly, $\rho^{\bs s_0}$ is a bijiection.  We have a canonical bijection between $\mc F^{\bs s}(W)$ and $\mc F^{\bs s_0}(W)$.
We have a canonical bijection between $\mc F^{\bs s}(R)$ and $\mc F^{\bs s_0}(R)$. These two bijections are compatible with $\rho^{\bs s}$ and
$\rho^{\bs s_0}$. The proposition follows.
\end{proof}

	\section{Bethe ansatz}\label{sec: ba}
	We recall some facts about the Gaudin model associated to $\gl_{M|N}$; see, for example, \cite{MVY14}.
	\subsection{Gaudin Hamiltonians}
	Let ($V_1$,\dots,$V_n$) be a sequence of $\gl_{M|N}$ modules. Let $\bs z=(z_1,\dots,z_n)$ be a sequence of pairwise distinct complex numbers. Consider the tensor product $V=\bigotimes_{k=1}^n V_k$.  The \emph{Gaudin Hamiltonians} $\HH_r\in\End(V)$, $r=1,\dots,n$, are given by \[\HH_r=\sum_{\substack{k=1\\k\neq r}}^{n}\frac{\sum_{a,b=1}^{M+N}e_{ab}^{(r)}e_{ba}^{(k)}(-1)^{|b|}}{z_r-z_k}, \]
	where $e_{ab}^{(k)}=\underset{k-1}{\underbrace{ 1\otimes \dots \otimes 1}}\otimes e_{ab} \otimes \underset{n-k}{\underbrace{ 1\otimes \dots \otimes 1}}$, $k=1,\dots,n$.
	
The proof of the following properties (which are well-known in the case of $\gl_M$) can be found in \cite{MVY14}.
	\begin{lem}
		We have:
		\begin{enumerate}
			\item the Gaudin Hamiltonians mutually commute, $[\HH_r,\HH_k]=0$, for all $r,k$;
			\item the Gaudin Hamiltonians commute with the diagonal $\gl_{M|N}$ action, $[\HH_k,X]=0$, for all $k$ and all $X\in \gl_{M|N}$;
			\item the sum of the Gaudin Hamiltonians is zero, $\sum_{k=1}^{n}\HH_k=0$;
			\item if $V_k,k=1,\dots,n$, are polynomial modules, then for generic $z_k,k=1,\dots,n$, the Gaudin Hamiltonians are diagonalizable;
			\item if $V_k,k=1,\dots,n$, are vector representations, then the joint spectrum of the Gaudin Hamiltonians is simple for generic $\bs z$.
		\end{enumerate}
		\qed
	\end{lem}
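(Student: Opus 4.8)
The plan is to treat the algebraic statements (1)--(3) and the spectral statements (4)--(5) by entirely different means. Write $\Omega_{rk}=\sum_{a,b=1}^{M+N}(-1)^{|b|}e_{ab}^{(r)}e_{ba}^{(k)}\in\End(V)$, so that $\HH_r=\sum_{k\neq r}\Omega_{rk}/(z_r-z_k)$. The element $\Omega_{rk}$ is the image, in the $r$th and $k$th factors, of the even Casimir $\Omega=\sum_{a,b}(-1)^{|b|}e_{ab}\otimes e_{ba}$ dual to the invariant form $(\ ,\ )$. The two structural properties I would record first are: (i) \emph{invariance}, $[\Omega_{rk},\,X^{(r)}+X^{(k)}]=0$ for every $X\in\gl_{M|N}$, which is just the statement that $\Omega$ is $\gl_{M|N}$-invariant; and (ii) \emph{supersymmetry}, $\Omega_{rk}=\Omega_{kr}$ as operators on $V$, which follows from a short check that $\Omega$ is fixed by the graded flip.

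Given (i) and (ii), statements (2) and (3) are immediate. For (2), note $\Omega_{rk}$ commutes with $X^{(l)}$ for $l\notin\{r,k\}$ and with $X^{(r)}+X^{(k)}$ by (i), hence with the diagonal action $X=\sum_l X^{(l)}$; summing over $k$ gives $[\HH_r,X]=0$. For (3), pairing the $(r,k)$ and $(k,r)$ terms and using $\Omega_{rk}=\Omega_{kr}$ from (ii) makes $\sum_r\HH_r=\sum_{r\neq k}\Omega_{rk}/(z_r-z_k)$ collapse to zero. For (1), I would first derive from (i) the classical relation $[\Omega_{rk},\,\Omega_{rl}+\Omega_{kl}]=0$ for distinct $r,k,l$: writing $\Omega_{rl}+\Omega_{kl}=\sum_{a,b}(-1)^{|b|}(e_{ab}^{(r)}+e_{ab}^{(k)})e_{ba}^{(l)}$ and pushing the inert, even factor-$l$ operators through $\Omega_{rk}$, the bracket reduces to $\sum_{a,b}(-1)^{|b|}[\Omega_{rk},\,e_{ab}^{(r)}+e_{ab}^{(k)}]e_{ba}^{(l)}=0$. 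Expanding $[\HH_r,\HH_s]$ for $r\neq s$, only commutators $[\Omega_{rk},\Omega_{sl}]$ with overlapping index sets survive; collecting these over the third index and combining the classical relation with the partial-fraction identities among the weights $1/((z_r-z_k)(z_s-z_l))$ shows the sum cancels. This is the standard Gaudin computation, the only super-specific input being the sign $(-1)^{|b|}$ and the graded commutation of operators on distinct factors.

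For (4) I would reduce diagonalizability to a genericity statement. Since the category of polynomial modules is semisimple and, by (2), the $\HH_r$ preserve each isotypic component, simultaneous diagonalizability of $\{\HH_r\}$ on $V$ is equivalent to simultaneous diagonalizability of their restrictions to the finite-dimensional singular spaces $V^{sing}_\mu$. These restrictions depend rationally on $\bs z$ (with poles only along $z_r=z_k$), so the set of $\bs z$ at which a generic combination $\sum_r c_r\HH_r$ fails to have simple spectrum on $V^{sing}_\mu$ lies in the zero locus of a discriminant, a proper subvariety \emph{provided} the discriminant is not identically zero. Thus it suffices to exhibit a single configuration with simple joint spectrum. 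I would obtain one either from a contravariant Hermitian form --- polynomial modules carry a nondegenerate contravariant form for which $\Omega$, and hence each $\HH_r$ at real $\bs z$, is self-adjoint, so the $\HH_r$ are simultaneously diagonalizable on a Zariski-dense real locus --- or from the asymptotic expansion of $\HH_r$ in the separation limit $z_1\ll\dots\ll z_n$, where the leading term is built from Cartan and lower Casimir operators and the problem reduces to the known $\gl_M$ and $\gl_N$ statements.

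Statement (5) is the same genericity argument specialized to vector representations: in the separation limit the leading eigenvalues of $\HH_r$ on each weight space are sums of Cartan and Casimir contributions that are pairwise distinct for vector factors, so the joint spectrum is simple there; simplicity being Zariski-open in $\bs z$, it persists generically, and in particular forces diagonalizability, giving (4) in this case as well. I expect the main obstacle to lie precisely here, in (4)--(5): parts (1)--(3) are formal consequences of invariance and supersymmetry, whereas producing a provably simple-spectrum configuration --- whether through the positivity of a contravariant form in the super setting or through careful control of the separation-limit asymptotics --- is the genuine technical content, and is where I would spend most of the effort, ultimately leaning on the $\gl_M$ results as in \cite{MVY14}.
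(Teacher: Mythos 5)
The paper does not actually prove this lemma: it says the proofs ``can be found in \cite{MVY14}'' and closes the statement with a qed symbol, so there is no internal argument to compare against and the benchmark is the cited reference. Your treatment of (1)--(3) is correct and is the standard one: $\Omega=\sum_{a,b}(-1)^{|b|}e_{ab}\otimes e_{ba}$ is the Casimir tensor dual to the given invariant form, it is $\gl_{M|N}$-invariant and fixed by the graded flip (your identity $\Omega_{rk}=\Omega_{kr}$ uses that $e_{ab}^{(r)}$ and $e_{ba}^{(k)}$ \emph{super}commute, which after reindexing reproduces the same sign $(-1)^{|b|}$), and since each $\Omega_{rk}$ is an \emph{even} operator, the invariance relation $[\Omega_{rk},\Omega_{rl}+\Omega_{kl}]=0$ and the partial-fraction cancellation in $[\HH_r,\HH_s]$ go through verbatim as in the even case.

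For (4)--(5) there is a genuine gap. First, diagonalizability is not a Zariski-open condition in $\bs z$ (a family of diagonalizable matrices can degenerate without the characteristic polynomial changing), so ``diagonalizable on the Zariski-dense real locus, hence for generic complex $\bs z$'' does not follow as stated; what is Zariski-open is ``at least $k$ distinct eigenvalues'' of a fixed operator, not diagonalizability. Second, your proposed reduction --- exhibit one configuration where a generic combination $\sum_r c_r\HH_r$ has \emph{simple} spectrum on each singular weight space --- asks for strictly more than diagonalizability; simplicity of the joint spectrum on singular spaces is precisely the kind of statement the authors only \emph{conjecture} (Conjecture \ref{the conj}), and the contravariant-form argument you invoke to produce such a configuration delivers diagonalizability, not simplicity, so the two halves of your argument do not connect. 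The clean route, and the one in \cite{MVY14}, is that polynomial modules are unitarizable, the $\HH_r$ are self-adjoint for the induced positive-definite contravariant form on $L(\bs\la)$ when $\bs z$ is real and distinct, hence simultaneously diagonalizable there --- with ``generic'' in (4) read accordingly, or supplemented by an explicit argument that the non-diagonalizable locus lies in a proper closed subvariety. Part (5) is also not a soft degeneration statement: in \cite{MVY14} it rests on constructing an explicit eigenbasis of Bethe vectors for $(\C^{M|N})^{\otimes n}$ with pairwise distinct eigenvalue data, i.e.\ on completeness of the Bethe ansatz for the vector representation, which is the real technical content you would need to supply.
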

	
	\subsection{Bethe ansatz equation} \label{sec: BAE}
	We fix a parity sequence $\bs s\in S_{M|N}$, a sequence $\bs \lambda=(\lambda^{(1)},\dots,\lambda^{(n)})$ of $\gl_{M|N}$ weights, and a sequence $\bs z=(z_1,\dots,z_n)$ of pairwise distinct complex numbers. We call $(\lambda^{(k)})^{\bm{s}}$ the \emph{weight at the point $z_k$ with respect to $\bm{s}$}.
	
	Let $\bs l=(l_1,\dots,l_{M+N-1})$ be a sequence of non-negative integers. Define $l=\sum_{i=1}^{M+N-1}l_i$. Let $c:\{1,\dots,l\}\rightarrow\{1,\dots,M+N-1\}$ be the \emph{colour function}, \[c(j)=r,\;\mbox{if}\;\sum_{i=1}^{r-1}l_i<j\leq\sum_{i=1}^{r}l_i.\] Let $\bm{t}=(t_1,\dots,t_l)$ be a collection of variables.  We say that $t_j$ has \emph{colour $c(j)$}. Define the \emph{weight at $\infty$ with respect to $\bm{s}$, $\bm{\lambda}$, and $\bm{l}$} by 
	\[
	\lambda^{(\bs s,\infty)}=\sum_{k=1}^{n}(\lambda^{(k)})^{\bm{s}}-\sum_{i=1}^{M+N-1}\alpha^{\bs s}_{i} l_i.
	\]
	
	The \emph{Bethe ansatz equation} (BAE) associated to $\bs s$, $\bs z$, $\bm\lambda$, and $\bm{l}$, is a system of algebraic equations on variables $\bs t$:
	\begin{equation}\label{BAE}
	-\sum_{k=1}^n\frac{((\lambda^{(k)})^{\bm{s}},\alpha^{\bs s}_{c(j)})}{t_{j}-z_k}+\sum_{\substack{r=1\\r\neq j}}^{l}\frac{(\alpha^{\bs s}_{c(r)},\alpha^{\bs s}_{c(j)})}{t_j-t_r}=0,\;j=1,\dots,l.
	\end{equation}
	The BAE is a system of equations for $\bs t$ and we call the single equation \eqref{BAE} \emph{the Bethe ansatz equation for $\bs t$ related to $t_j$}.
	
	Note that if $\bs t$ is a solution of the BAE and $(\alpha^{\bs s}_{c(r)},\alpha^{\bs s}_{c(j)})\neq 0$ for some $j\neq r$, then $t_{j}\neq t_{r}$. Also if $((\lambda^{(k)})^{\bm{s}},\alpha^{\bs s}_{c(j)})\neq 0$ for some $k$ and $j$, then $t_j\neq z_k$. 

In addition, we impose the following condition. Suppose $(\alpha^{\bs s}_{i},\alpha^{\bs s}_{i})=0$. Choose $j$ such that $c(j)=i$ and
consider the equation related to $t_j$ as an equation for one variable when all variables $t_r$ with $c(r)\neq i$ are fixed. This equation does not depend on the choice of $j$. Suppose $t$ is a solution of this equation of multiplicity $a$. Then we require that the number of $t_j$ such that $c(j)=i$ and $t_j=t$ is at most $a$. This condition will be important in what follows; cf. especially Lemma \ref{gl1|1 rplemma}, Theorem \ref{GRP}, and Conjecture \ref{conj1}.
	
	The group $\mathfrak{S}_{\bs l}=\mathfrak{S}_{l_1}\times\dots\times\mathfrak{S}_{l_{M+N-1}}$ acts on $\bs t$ by permuting the variables of the same colour. 

	We do not distinguish between solutions of the BAE in the same $\mathfrak{S}_{\bs l}$-orbit.

	\subsection{Weight function} Let $\lambda^{(k)}$, $k=1,\dots,n$, be polynomial $\gl_{M|N}$ weights. Let $v_k^{\bs s}=v^{\bs s}_{(\lambda^{(k)})^{\bm{s}}}$ be an $\bs s$-highest weight vector in the irreducible $\gl_{M|N}$ module $L(\lambda^{(k)})$. Consider the tensor product $L(\bm\lambda)=\bigotimes_{k=1}^n L(\lambda^{(k)})$. The \emph{weight function} is a vector $w^{\bs s}(\bs z,\bs t)$ in $L(\bm\lambda)$ depending on parameters $\bs z=(z_1,\dots,z_n)$ and variables $\bs t=(t_1,\dots,t_l)$. The weight function $w^{\bs s}(\bs z,\bs t)$ is constructed as follows (see \cite{MVY14}).
	
	Let an \emph{ordered partition} of $\{1,\dots,l\}$ into $n$ parts be a sequence $\bs I=(i^1_1,\dots,i^1_{p_1};\dots;i^n_1,\dots,i^n_{p_n})$, where $p_1+\dots+p_n=l$ and $\bs I$ is a permutation of $(1,\dots,l)$. Let $P(l,n)$ be the set of all such ordered partitions.
	
	Denote $F^{\bs s}_{c(r)}=e^{\bs s}_{c(r)+1,c(r)}$. To each ordered partition $\bs I\in P(l,n)$,  associate  a vector $F^{\bs s}_{\bs I}v\in L(\bm\lambda)$ and a rational function $w_{\bs I}(\bs z, \bs t)$,
	\begin{align*}
	& F^{\bs s}_{\bs I}v=F^{\bs s}_{c(i^1_1)}\dots F^{\bs s}_{c(i^1_{p_1})}v^{\bs s}_1\otimes\dots\otimes F^{\bs s}_{c(i^n_1)}\dots F^{\bs s}_{c(i^n_{p_n})}v^{\bs s}_{n},\\
	& w_{\bs I}(\bs z,\bs t)=w_{\{i^1_1,\dots,i^1_{p_1}\}}(z_1,\bs t)\dots w_{\{i^n_1,\dots,i^n_{p_n}\}}(z_n,\bs t),
	\end{align*}
	where for $\{i_1,\dots,i_r\}\subset\{1,\dots,l\}$, \[w_{\{i_1,\dots,i_r\}}(z,\bs t)=\frac{1}{(t_{i_1}-t_{i_2})\dots(t_{i_{r-1}}-t_{i_r})(t_{i_r}-z)}.\]
	Define \[(-1)^{|\bs I|}=\prod_{r=1}^l\prod_{\substack{j>r\\{\bs I}(j)<{\bs I}(r)}}(-1)^{|F^{\bs s}_{c(r)}|\cdot|F^{\bs s}_{c(j)}|}.\] 
	
	Then the weight function $w^{\bs s}(\bs z,\bs t)$ is  \begin{equation}
	w^{\bs s}(\bs z,\bs t)=\sum_{\bs I\in P(l,n)}(-1)^{|\bs I|}w_{\bs I}(\bs z,\bs t)F^{\bs s}_{\bs I}v.
	\end{equation}
	
	We have the following theorem.
	\begin{thm}{\cite{MVY14}}\label{MVY14}
		If $\bm{\lambda}$ is a sequence of polynomial weights and $\bs t$ is a solution of the BAE associated to $\bs s$, $\bs z$, $\bm\lambda$, and $\bm{l}$, then the vector $w^{\bs s}(\bs z,\bs t)\in L(\bs \lambda)$ is a joint eigenvector of the Gaudin Hamiltonians, $\HH_k w^{\bs s}(\bs z,\bs t)=E_kw^{\bs s}(\bs z,\bs t)$, $k=1,\dots,n$, where the eigenvalues $E_k$ are given by
		\begin{equation}\label{hamiltonian}
		E_k=\sum_{\substack{r=1\\r\neq k}}^{n}\frac{\left((\lambda^{(k)})^{\bm{s}},(\lambda^{(r)})^{\bm{s}}\right)}{z_k-z_r}+\sum_{j=1}^l\frac{\left((\lambda^{(k)})^{\bm{s}},\alpha^{\bs s}_{c(j)}\right)}{t_j-z_k}.
		\end{equation}
		Moreover, the vector $w^{\bs s}(\bs z,\bs t)$ belongs to $\left(L(\bm\lambda)\right)^{\textup{$\bs s$ing}}_{\lambda^{(\bs s,\infty)}}$. 
		\qed
	\end{thm}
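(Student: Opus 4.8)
The plan is to prove three assertions in turn: that $w^{\bs s}(\bs z,\bs t)$ lies in the weight space of weight $\lambda^{(\bs s,\infty)}$, that it is $\bs s$-singular (annihilated by $\mathfrak n^+_{\bs s}$), and that it is a joint eigenvector of the $\HH_k$ with eigenvalues $E_k$. The weight statement is immediate from the construction: every summand $F^{\bs s}_{\bs I}v$ is obtained from the tensor product $v^{\bs s}_1\otimes\dots\otimes v^{\bs s}_n$ of $\bs s$-highest weight vectors by applying, in total, $l_i$ copies of the root vector $F^{\bs s}_i=e^{\bs s}_{i+1,i}$ of weight $-\alpha^{\bs s}_i$. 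Hence each summand, and so $w^{\bs s}(\bs z,\bs t)$, has weight $\sum_k(\lambda^{(k)})^{\bs s}-\sum_i l_i\alpha^{\bs s}_i=\lambda^{(\bs s,\infty)}$.

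For singularity I would apply each simple raising operator $e^{\bs s}_{i,i+1}$ (acting diagonally on the tensor product) and show the result vanishes precisely on the BAE locus. Commuting $e^{\bs s}_{i,i+1}$ to the right through the string of lowering operators and using $e^{\bs s}_{i,i+1}v^{\bs s}_k=0$, the key simplification is that the super-commutator $[e^{\bs s}_{i,i+1},F^{\bs s}_j]$ vanishes unless $j=i$, in which case it returns the Cartan element $e^{\bs s}_{ii}\mp e^{\bs s}_{i+1,i+1}$ (the sign depending on the parity of $\alpha^{\bs s}_i$); moving $e^{\bs s}_{i,i+1}$ past a lowering operator of colour $\neq i$ only introduces a sign $(-1)^{|\cdot||\cdot|}$. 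Thus the only surviving contributions come from ``using up'' the raising operator on a colour-$i$ lowering operator, which amounts to deleting one Bethe root of colour $i$ and inserting the corresponding Cartan action. Collecting these contributions, one finds that $e^{\bs s}_{i,i+1}w^{\bs s}(\bs z,\bs t)$ is a sum, over the colour-$i$ variables $t_j$, of the weight function for the system with $t_j$ removed, each weighted by exactly the left-hand side of the Bethe ansatz equation \eqref{BAE} related to $t_j$. The BAE then forces the result to vanish. When $(\alpha^{\bs s}_i,\alpha^{\bs s}_i)=0$, i.e.\ for an isotropic odd simple root, the rational cofactor degenerates and the cancellation relies exactly on the multiplicity condition imposed in Section \ref{sec: BAE}.

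For the eigenvalue I would run a parallel but longer computation, now acting with $\HH_k=\sum_{l\neq k}\Omega^{(kl)}/(z_k-z_l)$, where $\Omega=\sum_{a,b}(-1)^{|b|}e_{ab}\otimes e_{ba}$ is the split quadratic Casimir. Decomposing $\Omega$ with respect to $\bs s$ into its Cartan part $\Omega^0=\sum_a(-1)^{|a|}e_{aa}\otimes e_{aa}$ and its $\bs s$-raising$\otimes$lowering and lowering$\otimes$raising pieces, and acting on $w^{\bs s}(\bs z,\bs t)$, the $\bs s$-raising part in the $k$-th slot is moved to the right: it either annihilates a highest weight vector or produces, through a super-commutator, a simple pole in some $t_j-z_k$. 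Summing the residues of these cross terms yields the Bethe-root sum $\sum_j((\lambda^{(k)})^{\bs s},\alpha^{\bs s}_{c(j)})/(t_j-z_k)$, while the Cartan two-site interaction $\Omega^0$ contributes the first sum $\sum_{l\neq k}((\lambda^{(k)})^{\bs s},(\lambda^{(l)})^{\bs s})/(z_k-z_l)$; all remaining terms are proportional to $\mathfrak n^+_{\bs s}$ applied to $w^{\bs s}$ and vanish by the singularity already established. This gives $\HH_k w^{\bs s}=E_k w^{\bs s}$.

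The main obstacle is the sign bookkeeping in the super setting: each transposition of homogeneous operators produces a factor $(-1)^{|\cdot||\cdot|}$, and one must verify that these assemble into the global sign $(-1)^{|\bs I|}$ so that the reduced weight functions re-emerge with the correct coefficients. The delicate case is again the isotropic odd simple roots, where the naive residue extraction degenerates and the supplementary multiplicity condition becomes essential. A clean way to organize both the singularity and the eigenvalue computations is to observe that the action along a fixed simple root $i$ only involves the rank-one subalgebra spanned by $e^{\bs s}_{i,i+1}$, $e^{\bs s}_{i+1,i}$ and their commutator, so the verification reduces root by root to the $\gl_2$ and $\gl_{1|1}$ cases; this is essentially the mechanism underlying the reproduction procedures of Sections \ref{sec: rep} and \ref{sec: repglmn}.
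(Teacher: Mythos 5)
There is an important mismatch of expectations here: the paper does not prove Theorem \ref{MVY14} at all. It is imported verbatim from \cite{MVY14} and stated with an immediate \qed, so there is no internal argument to compare yours against; the only honest comparison is with the proof in the cited reference, which does proceed by the kind of direct computation you outline. Your strategy is the standard one and, at the level of structure, it is right: the weight statement is indeed immediate; the singularity of $w^{\bs s}(\bs z,\bs t)$ is obtained by commuting each $e^{\bs s}_{i,i+1}$ through the strings of lowering operators (your observation that $[e^{\bs s}_{i,i+1},F^{\bs s}_j]$ vanishes for $j\neq i$ and yields $e^{\bs s}_{ii}-(-1)^{|\alpha^{\bs s}_i|}e^{\bs s}_{i+1,i+1}$ for $j=i$ is correct); and the eigenvalue follows from splitting the Casimir into Cartan and raising/lowering parts modulo the already-established singularity.

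That said, as a proof your proposal is an outline rather than an argument: everything that makes this theorem nontrivial is deferred. Concretely, (i) after the raising operator is ``used up'' on a colour-$i$ factor, the resulting Cartan element sits in the middle of a product of lowering operators and must itself be commuted to the right before it can act on the highest weight vectors; this generates the extra terms whose resummation (via partial-fraction identities among the $w_{\{i_1,\dots,i_r\}}$) is precisely what produces the BAE as the coefficient of each reduced weight function -- you assert this output without performing the resummation; (ii) the claim that the super-signs assemble into $(-1)^{|\bs I|}$ is the crux of the whole super generalization and is stated as ``the main obstacle'' rather than verified; (iii) the degenerate case $(\alpha^{\bs s}_i,\alpha^{\bs s}_i)=0$, where the naive residue extraction fails and the multiplicity convention of Section \ref{sec: BAE} must be invoked, is flagged but not treated. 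Finally, the closing remark that the eigenvalue computation ``reduces root by root to the $\gl_2$ and $\gl_{1|1}$ cases'' overreaches: the Casimir $\Omega$ involves all root vectors $e_{ab}$, not only the simple ones, so the two-site computation does not literally localize to rank-one subalgebras the way the singularity computation does. None of this makes the approach wrong -- it is the approach of \cite{MVY14} -- but the proposal as written does not yet constitute a proof.
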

	If $\bs t$ is a solution of the BAE associated to $\bs s$, $\bs z$, $\bm\lambda$, and $\bm{l}$, then the value of the weight function $w^{\bs s}(\bs z,\bs t)$ is called a \emph{Bethe vector}.

	\subsection{Polynomials representing solutions of the BAE}
	
	Fix a parity sequence $\bs s\in S_{M|N}$. Let $\bm\lambda=(\lambda^{(1)},\dots,\lambda^{(n)})$ be a sequence of polynomial $\gl_{M|N}$ weights. Let $\bs z=(z_1,\dots,z_n)$ be a sequence of pairwise distinct complex numbers.
	
	Define a sequence of polynomials $\bm{T}^{\bm{s}}=(T_1^{\bm{s}},\dots,T_{M+N}^{\bm{s}})$ associated to $\bm{s}$, $\bm{\lambda}$ and $\bm{z}$, 
	\begin{equation}\label{T fun}
	T_i^{\bm{s}}(x)=\prod_{k=1}^{n}(x-z_k)^{(\lambda^{(k)})^{\bm{s}}(e^{\bs s}_{ii})},\;i=1,\dots,M+N.
	\end{equation}
	Note that  $T^{\bs s}_i(T^{\bm{s}}_{i+1})^{-s_is_{i+1}}$ is a polynomial for all $i=1,\dots,M+N$.

	Let $\bs l=(l_1,\dots,l_{M+N-1})$ be a sequence of non-negative integers. Let $\bm{t}=(t_1,\dots,t_l)$ be a solution of the BAE associated to $\bm{s}$, $\bm{z}$, $\bm{\lambda}$, and $\bm{l}$. Define a sequence of polynomials $\bm{y}=(y_1,\dots,y_{M+N-1})$ by 
\be y_i(x)=\prod_{j,\ c(j)=i}(x-t_j),\;i=1,\dots,M+N-1\label{y fun}.\ee	 
We say the \emph{sequence of polynomials $\bm{y}$ represents $\bm{t}$}.
	
We consider each polynomial $y_i(x)$ up to a multiplication by a non-zero number. We also do not consider zero polynomials $y_i(x)$.
Thus, the sequence $\bs y$ defines a point in the direct product $\mathbb P(\C[x])^{M+N-1}$ of $M+N-1$ copies of the projective space associated to the vector space of polynomials in $x$. We also have $\deg y_i=l_i$. 
	
	\medskip
	
	A sequence of polynomials $\bs y$ is \emph{generic with respect to $\bs s$, $\bm\lambda$, and $\bs z$}, if it satisfies the following conditions:
	\begin{enumerate}
		\item if $s_is_{i+1}=1$, then $y_i(x)$ has only simple roots;
		\item if $(\alpha^{\bs s}_{i},\alpha^{\bs s}_{j})\neq 0$ and $i\neq j$, then $y_i(x)$ and $y_j(x)$ have no common roots;
		\item all roots of $y_i(x)$ are different from the roots of $T^{\bs s}_i(x)(T^{\bs s}_{i+1}(x))^{-s_is_{i+1}}$.
	\end{enumerate}
	If $\bs y$ represents a solution of the BAE associated to $\bs s$, $\bs z$, $\bm{\lambda}$, and $\bs l$, then $\bs y$ is generic with respect to $\bs s$, $\bm\lambda$, and $\bs z$.
	
	
	\section{Reproduction procedure for $\gl_2$ and $\gl_{1|1}$}\label{sec: rep}
	We recall the reproduction procedure for $\gl_2$, see \cite{MVpopulations}, and define its analogue for $\gl_{1|1}$.
	
	\subsection{Reproduction procedure for $\gl_2$}
	Consider the case of $M=2$ and $N=0$. We write $\gl_{2|0}\cong\gl_{0|2}\cong\gl_2$. Let $\bm\lambda=(\lambda^{(1)},\dots,\lambda^{(n)})=\left((p_1,q_1),\dots,(p_n,q_n)\right)$ be a sequence of polynomial $\gl_2$ weights: $p_k, q_k \in\Z$,  $p_k\geq q_k\geq 0$, $k=1,\dots,n$.  Let $\bs z=(z_1,\dots,z_n)$ be a sequence of pairwise distinct complex numbers.  We have \[
	T_1=\prod_{k=1}^n(x-z_k)^{p_k},\qquad\qquad T_2=\prod_{k=1}^n(x-z_k)^{q_k}.
	\]
	Let $p=\deg T_1$ and $q=\deg T_2$.

	Let $l$ be a non-negative integer. Let $\bs t=(t_1,\dots,t_l)$ be a collection of variables. The Bethe ansatz equation associated to $\bm\lambda$, $\bs z$ and $l$, is given by \begin{equation}\label{sl2BAE}
	-\sum_{k=1}^n\frac{p_k-q_k}{t_j-z_k}+\sum_{\substack{r=1\\r\neq j}}^{l}\frac{2}{t_j-t_r}=0,\;j=1,\dots,l.
	\end{equation}

	One can reformulate the BAE \eqref{sl2BAE} and construct a family of new solutions of the BAE as follows. 
	\begin{lem}{\cite{MVpopulations}}\label{MVpopulations} Let $y$ be a degree $l$ polynomial generic with respect to $\bm{\lambda}$ and $\bs z$.
		\begin{enumerate}
			\item The polynomial $y\in\C[x]$ represents a solution of the BAE \eqref{sl2BAE} associated to $\bm\lambda$, $\bs z$ and $l$, if and only if there exists a polynomial $\widetilde{y}\in\C[x]$, such that 
			\begin{equation}\label{sl2wr}
			\Wr(y,\widetilde{y})=T_1T_2^{-1}.
			\end{equation}
			\item  If $\widetilde{y}$ is generic, then $\widetilde{y}$ represents a solution of the BAE associated to $\bm\lambda$, $\bs z$ and $\widetilde{l}$, where $\widetilde{l}=\deg \widetilde{y}$.
		\end{enumerate}		
		\qed
	\end{lem}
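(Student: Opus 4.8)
The plan is to reduce both parts to a single residue computation that realizes the differential-operator reformulation mentioned in the introduction: the operator $D=\big(\partial-\ln'\frac{T_1}{T_2y}\big)\big(\partial-\ln' y\big)$ has $y$ in its kernel, and a second kernel element is polynomial exactly when the Wronskian \eqref{sl2wr} holds. Write $h:=T_1T_2^{-1}=\prod_k(x-z_k)^{p_k-q_k}$, which is a genuine polynomial because $p_k\geq q_k$. First I would compute, for a generic $y=\prod_j(x-t_j)$ with simple roots, the residue of the rational function $h/y^2$ at a root $t_j$. Since the roots of $y$ are simple and avoid the roots of $h$ (genericity conditions (1) and (3)), the pole at $t_j$ has order two and $h(t_j)\neq 0$; a direct Laurent expansion gives
\[
\operatorname{Res}_{x=t_j}\frac{h}{y^2}=\frac{h(t_j)}{\big(\prod_{r\neq j}(t_j-t_r)\big)^{2}}\Big(\ln' h(t_j)-2\sum_{r\neq j}\frac1{t_j-t_r}\Big).
\]
Because $\ln' h=\sum_k (p_k-q_k)/(x-z_k)$, the bracketed factor is exactly minus the left-hand side of \eqref{sl2BAE} evaluated at $t_j$, while the prefactor is nonzero. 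Hence the residues of $h/y^2$ at all roots of $y$ vanish if and only if $y$ represents a solution of the BAE.

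For the forward direction of part (1) I would argue that when these residues vanish the function $h/y^2$ — whose only poles are at roots of $y$ — admits a rational antiderivative $\phi$, and that $\phi$ has at worst simple poles at the $t_j$ (the order-two part integrates to a simple pole, the residue part being absent). Setting $\wt y:=y\phi$ then cancels these poles, so $\wt y$ is a polynomial, and the one-line check $\Wr(y,\wt y)=y^2\phi'=h=T_1T_2^{-1}$ yields \eqref{sl2wr}. Conversely, given a polynomial $\wt y$ with $\Wr(y,\wt y)=T_1T_2^{-1}$, I would put $\phi:=\wt y/y$, so that $\phi'=\Wr(y,\wt y)/y^2=h/y^2$; since $\phi$ is rational, the residue of $\phi'$ vanishes at every point, and by the computation above this forces the BAE at each $t_j$. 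This establishes the equivalence in part (1).

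Part (2) then follows from the antisymmetry of the Wronskian: $\Wr(\wt y,y)=-T_1T_2^{-1}$, so after absorbing the sign into the normalization (we work projectively, identifying each polynomial up to a nonzero scalar) the pair $(\wt y,y)$ satisfies the relation \eqref{sl2wr} with the roles reversed. Applying the converse direction of part (1) to $\wt y$ — legitimate precisely because $\wt y$ is assumed generic with respect to $\bs\lambda$ and $\bs z$ — shows that $\wt y$ represents a solution of the BAE, with $\wt l=\deg\wt y$.

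The main obstacle I anticipate is not the residue identity itself, which is a routine expansion, but the pole-order bookkeeping in the forward direction: one must verify both that the vanishing of residues makes the antiderivative $\phi$ globally rational (rather than merely locally defined up to logarithms) and that the product $\wt y=y\phi$ is free of poles, hence an honest polynomial. The genericity hypotheses — simple roots of $y$, disjoint from the roots of $T_1T_2^{-1}$ — are exactly what guarantee the clean double-pole structure that makes this control possible; without them the correspondence between residues and the BAE would degenerate.
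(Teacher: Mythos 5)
Your argument is correct and is essentially the paper's own: the paper cites \cite{MVpopulations} and records the same mechanism in \eqref{c1c2}, namely that $\widetilde{y}=c_1y\int T_1T_2^{-1}y^{-2}\,dx+c_2y$ is a polynomial precisely because the BAE forces the integrand $T_1T_2^{-1}y^{-2}$ to have vanishing residues at the roots of $y$. Your residue expansion, the identification of the bracket with the left-hand side of \eqref{sl2BAE}, and the use of antisymmetry of the Wronskian for part (2) all match that route.
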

	Explicitly, the polynomial $\widetilde{y}$ in Lemma \ref{MVpopulations} is given by \begin{equation}\label{c1c2}
	\widetilde{y}(x)=c_1y(x)\int T_1(x)T_2^{-1}(x)y^{-2}(x)dx+c_2y(x),
	\end{equation} where $c_1$ is some non-zero complex number and $c_2\in\C$ is arbitrary. The BAE \eqref{sl2BAE} guarantees that the integrand has no residues and therefore $\widetilde{y}$ is a polynomial. All but finitely many $\widetilde{y}$ are generic with respect to $\bm{\lambda}$ and $\bs z$, and therefore represent solutions of the BAE \eqref{sl2BAE}. 
	
	Thus, from the polynomial $y$, we construct a family of polynomials $\widetilde{y}$. Following \cite{MVpopulations}, we call this construction the \emph{$\gl_2$ reproduction procedure}.
	
	Let $P_y$ be the closure of the set containing $y$ and all $\widetilde{y}$ in $\mathbb P(\C[x])$. We call $P_y$ the \emph{$\gl_2$ population originated at $y$}. The set $P_y$ is identified with the projective line $\C P^1$ with projective coordinates $(c_1:c_2)$.
	
	The weight at infinity associated to $\bs \lambda ,l$,  is $\lambda^{(\infty)}=(p-l,q+l)$. 
	Assume the weight $\lambda^{(\infty)}$ is dominant, meaning $2l\leq p-q$.
	Then the weight at infinity associated to $\bs \lambda ,\widetilde{l}$, is
	\[\widetilde{\lambda}^{(\infty)}=(p-\widetilde{l},q+\widetilde{l})=(q+l-1,p-l+1)=s\cdot \lambda^{(\infty)},\]
where $s\in\mathfrak S_2$ is the non-trivial $\gl_2$ Weyl group element, and the dot denotes the shifted action.
	
	Let $\widetilde y=\prod_{r=1}^{\widetilde{l}}(x- \widetilde{t}_r)$ and $\bm{\widetilde{t}}=(\widetilde{t}_1,\dots,\widetilde{t}_{\widetilde{l}})$. If $y$ is generic, then by Lemma \ref{MVpopulations}, $\widetilde {\bs t}$ is a solution of the BAE \eqref{sl2BAE}. By Theorem \ref{MVY14}, the value of the weight function $w(\bs z,\bm{\widetilde{t}})$ is a singular vector. However, $\widetilde{\lambda}^{(\infty)}$ is not dominant and therefore  $w(\bs z,\bm{\widetilde{t}})=0$ in $L(\bs \lambda)$. So, in a $\gl_2$ population only the unique smallest degree polynomial corresponds to an actual eigenvector in $L(\bs\lambda)$.
   
Consider formula \eqref{hamiltonian} for the eigenvalues $E_k$ of the Gaudin Hamiltonians. Clearly,
\[
\ln'{y}(z_k)=\ln'{\widetilde{y}}(z_k), \; k=1,\dots, n,
\]
which implies that the eigenvalues $E_k$ for the solution $\bs t$ of the BAE  are equal to those for the solution $\widetilde{\bs t}$.
That fact can be reformulated in the following form.
  
	Define a differential operator
	$$
	D(y)=\left(\partial - \ln'\frac{T_1}{y}\right)(\partial-\ln' T_2\,y).
	$$
	The operator $D(y)$ does not depend on a choice of polynomial $y$ in a population, $D(y)=D(\wt y)$.
	
	\subsection{Reproduction procedure for $\gl_{1|1}$}\label{Reproduction procedure for gl{1|1}} Consider the case of $M=N=1$.
	We have $ S_{1|1}=\{(1,-1),(-1,1)\}$. Let $\bs s$ and $\widetilde{\bs s}=\bs s^{[1]}$ be two different parity sequences. Let $\bm\lambda=(\lambda^{(1)},\dots,\lambda^{(n)})$ be a sequence of polynomial $\gl_{1|1}$ weights. For each $k=1,\dots,n$, let us write $(\lambda^{(k)})^{\bs s}_{[\bs s]}=(p_k,q_k)$, where $p_k,q_k\in\Z_{\geq 0}$ and if $p_k=0$ then $q_k=0$. Note that $\lambda^{(k)}$ is atypical if and only if it is zero, $p_k=q_k=0$, which happens if and only if $p_k+q_k = 0$. 
Let $\bs z=(z_1,\dots,z_n)$ be a sequence of pairwise distinct complex numbers.
	
	Let $$\widetilde{p}_k=\begin{cases} q_k+1 \ \ &{\rm if}\ \ p_k+q_k\neq 0,
	\\ 0\ &{\rm if}\ \ p_k+q_k=0,\end{cases} \qquad  \widetilde{q}_k=\begin{cases} p_k-1 \ \ &{\rm if}\ \ p_k+q_k\neq 0,
	\\ 0\ &{\rm if}\ \ p_k+q_k=0.\end{cases}  $$

	Equation \eqref{T fun} becomes
	\begin{align*}
	 &T_1^{\bm{s}}=\prod_{k=1}^n(x-z_k)^{p_k},\quad T_2^{\bm{s}}=\prod_{k=1}^n(x-z_k)^{q_k},\\
	 T_1^{\widetilde{\bm{s}}}=\prod_{\substack{k=1 \\ p_k+q_k\neq 0}}^n(x-z_k)^{q_k+1}&=\prod_{k=1}^n(x-z_k)^{\widetilde{p}_k}, \quad 
	T_2^{\widetilde{\bm{s}}}=\prod_{\substack{k=1 \\ p_k+q_k\neq 0}}^n(x-z_k)^{p_k-1}=\prod_{k=1}^n(x-z_k)^{\widetilde{q}_k}.
	\end{align*}
	Let $p=\deg T_1^{\bs s}$, $q=\deg T_2^{\bs s}$. Similarly, let $\widetilde{p}=\deg T_1^{\widetilde{\bs s}}$, $\widetilde{q}=\deg T_2^{\widetilde{\bs s}}$.
	
	Let $m=\#\{k\;|\; p_k+q_k\neq 0\}$ be the number of typical modules. Then $\widetilde{p}=q+m$ and $\widetilde{q}=p-m$.

	Let $l$ be a non-negative integer. Let $\bs t=(t_1,\dots,t_l)$ be a collection of variables. The Bethe ansatz equation associated to $\bs s$, $\bm\lambda$, $\bs z$, and $l$, takes the form:
	\be\label{gl(1|1)BAE}
	\sum_{k=1}^n\frac{p_k+q_k}{t_j-z_k}=0, \; j=1,\dots,l.
	\ee
	
	 The Bethe ansatz equation \eqref{gl(1|1)BAE} can be written in the form		
	\[\ln'\left(T_1^{\bm{s}}T_2^{\bm{s}}\right)(t_j)=0.\] 

	Note that $T_1^{\bm{s}}T_2^{\bm{s}}=T_1^{\widetilde{\bm{s}}}T_2^{\widetilde{\bm{s}}}$. Thus, in the case of $\gl_{1|1}$, the BAEs \eqref{gl(1|1)BAE} associated to $\bs s$ and $\widetilde{\bs s}$ coincide. 
	
	\medskip
	
	Define a map $\pi$ from non-zero rational functions $\C(x)$ to monic polynomials in $\C[x]$ with distinct roots. For any nonzero rational function $f(x)$, $\pi(f)(z)=0$ if and only if $f(z)=0$ or $(1/f)(z)=0$. 

	\begin{exmp}
	We have $\pi\left(x^5(x-1)^4(x-3)^{-1}(x+6)^{-2}\right)=x(x-1)(x-3)(x+6)$.
	\end{exmp}
	
	The polynomial $\pi(f)$ is the minimal monic denominator of the rational function $\ln'(f)$ of smallest possible degree.
		
		\medskip
		
	We call the sequence of polynomial $\gl_{1|1}$ weights $\bs \lambda$ \emph{typical} if at least one of the weights $\lambda^{(k)}$ is typical. 
	Then $\bs \lambda$ is typical if and only if $p+q\neq 0$. Also $\bs \lambda$ is not typical if and only if $T_1^{\bm{s}}T_2^{\bm{s}}=1$.
	
	We reformulate the BAE \eqref{gl(1|1)BAE} and construct a new solution as follows.
	\begin{lem}\label{gl1|1 rplemma} Let $y$ be a polynomial of degree $l$. Let $\bs \lambda$ be typical.
		\begin{enumerate}
			\item The polynomial $y$ represents a solution of the BAE \eqref{gl(1|1)BAE} associated to $\bs s$, $\bs z$, $\bm\lambda$, and $l$, if and only if there exists a polynomial $\widetilde{y}$, such that 
			\begin{equation}\label{gl1|1 rp}
			y\cdot\widetilde{y}=\ln'\left(T_1^{\bm{s}}T_2^{\bm{s}}\right)\pi(T^{\bs s}_1T^{\bs s}_2).
			\end{equation}
			\item  The polynomial $\widetilde{y}$ represents a solution of the BAE \eqref{gl(1|1)BAE} associated to $\widetilde{\bm{s}}$, $\bs z$, $\bm{\lambda}$, and $\widetilde{l}$, where $\widetilde{l}=\deg \widetilde{y}=m-1-l$.
			\qed
		\end{enumerate}
	\end{lem}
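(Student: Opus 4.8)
The plan is to introduce the polynomial
$$\mc N = \ln'\!\left(T_1^{\bs s}T_2^{\bs s}\right)\pi\!\left(T_1^{\bs s}T_2^{\bs s}\right),$$
to show that a degree-$l$ polynomial $y$ represents a solution of the BAE \eqref{gl(1|1)BAE} if and only if $y$ divides $\mc N$, and then to read off both parts. First I would record the explicit shape of $\mc N$. Writing $a_k = p_k+q_k$, we have $T_1^{\bs s}T_2^{\bs s}=\prod_{k=1}^n(x-z_k)^{a_k}$ with $a_k\geq0$, so $\pi(T_1^{\bs s}T_2^{\bs s})=\prod_{k:\,a_k\neq0}(x-z_k)$ has degree $m$ and
$$\mc N=\sum_{k:\,a_k\neq0}a_k\prod_{j:\,a_j\neq0,\,j\neq k}(x-z_j).$$
This is manifestly a polynomial; its coefficient of $x^{m-1}$ equals $\sum_k a_k=p+q$, which is nonzero precisely because $\bs\lambda$ is typical, so $\deg\mc N=m-1$. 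Moreover $\mc N(z_k)=a_k\prod_{j\neq k,\,a_j\neq0}(z_k-z_j)\neq0$ for every typical $k$, so no root of $\mc N$ coincides with a $z_k$; and for $t\notin\{z_k\}$ one has $\mc N(t)=0$ if and only if $\sum_k a_k/(t-z_k)=0$, i.e.\ if and only if $t$ solves \eqref{gl(1|1)BAE}.

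Next I would translate ``$y$ represents a solution'' into divisibility. Since $(\alpha^{\bs s}_1,\alpha^{\bs s}_1)=s_1+s_2=0$, every Bethe root is governed by the same single-variable equation $\sum_k a_k/(t-z_k)=0$ with no dependence on the other roots, and the multiplicity condition imposed in Section \ref{sec: BAE} says exactly that the number of roots of $y$ equal to a given value $t$ may not exceed the multiplicity of $t$ as a root of $\mc N$ (the two multiplicities agree away from the $z_k$, since $\pi$ contributes only simple factors there). Combined with the previous paragraph, this shows that the multiset of roots of $y$ forms a solution of \eqref{gl(1|1)BAE} if and only if $y\mid\mc N$. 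Genericity is automatic: for $\gl_{1|1}$ conditions (1),(2) in the definition of a generic sequence are vacuous, while (3), that the roots of $y$ avoid the roots of $T_1^{\bs s}(T_2^{\bs s})^{-s_1s_2}=T_1^{\bs s}T_2^{\bs s}$, holds for any divisor of $\mc N$ because $\mc N$ has no root at any $z_k$. Thus $y$ represents a solution if and only if $\widetilde y := \mc N/y$ is a polynomial, which is \eqref{gl1|1 rp}, proving part (1).

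Finally, for part (2) I would exploit that the whole construction is symmetric under $\bs s\leftrightarrow\widetilde{\bs s}=\bs s^{[1]}$. Indeed $T_1^{\bs s}T_2^{\bs s}=T_1^{\widetilde{\bs s}}T_2^{\widetilde{\bs s}}$ and $\widetilde p_k+\widetilde q_k=p_k+q_k=a_k$, so both $\mc N$ and the equation \eqref{gl(1|1)BAE} are literally unchanged when $\bs s$ is replaced by $\widetilde{\bs s}$. Since $\widetilde y=\mc N/y$ divides $\mc N$, the criterion of part (1) applied with $\widetilde{\bs s}$ in place of $\bs s$ shows that $\widetilde y$ represents a solution of the BAE associated to $\widetilde{\bs s}$, and $\deg\widetilde y=\deg\mc N-\deg y=m-1-l=\widetilde l$. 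The only delicate point in the argument is the middle step: checking that the multiplicity bookkeeping in the definition of a solution corresponds \emph{exactly} to divisibility by $\mc N$, and that the genericity requirements are automatically met, so that the stated equivalence is an honest ``if and only if''; everything else is a direct computation with the explicit form of $\mc N$.
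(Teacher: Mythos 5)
Your proof is correct and follows exactly the route the paper intends: the paper states this lemma without proof (marking it \qed as immediate from the preceding reformulation $\ln'(T_1^{\bs s}T_2^{\bs s})(t_j)=0$, the multiplicity convention of Section \ref{sec: BAE}, and the invariance $T_1^{\bs s}T_2^{\bs s}=T_1^{\wt{\bs s}}T_2^{\wt{\bs s}}$), and your argument simply fills in those details, including the key points that typicality forces $\deg\mc N=m-1$ and that divisibility of $\mc N$ encodes both the single-variable equation and the multiplicity bound. The only cosmetic quibble is that $\mc N$ is guaranteed nonvanishing only at the $z_k$ with $p_k+q_k\neq 0$, but since those are precisely the roots of $T_1^{\bs s}T_2^{\bs s}$ this does not affect your verification of genericity.
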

	From the polynomial $y$, we construct a unique polynomial $\widetilde{y}$. We call this construction the \emph{$\gl_{1|1}$ reproduction procedure}.
	
	Let $P_y$ be the set containing $y$ and $\widetilde{y}$. The set $P_y$ is called the \emph{$\gl_{1|1}$ population originated at $y$}.
	
	The weight at infinity associated to $\bs s, \bs \lambda$, and $l$ is $\lambda^{(\bs s,\infty)}_{[\bs s]}=(p-l,q+l)$. The weight at infinity associated to $\widetilde{\bs s}, \bs \lambda$ and $\widetilde{l}$ is $\widetilde{\lambda}^{(\widetilde{\bs s},\infty)}_{[\widetilde{\bm{s}}]}=(\widetilde{p}-\widetilde{l},\widetilde{q}+\widetilde{l})= (q+l+1,p-l-1)$. Thus we have $\lambda^{(\bs s,\infty)}=\widetilde{\lambda}^{(\widetilde{\bs s},\infty)}+\alpha^{\bs s}$.
	In particular, both $y$ and $\widetilde{y}$ correspond to actual eigenvectors of the Gaudin Hamiltonians.
	
	\begin{rem}\label{gl11rem}
	If $\bs \la$ is not typical, then all participating representations are one-dimensional and the situation is trivial. In particular, we have 
	$y(x)=1$. In this case we can define $\wt{y}=1$. We do not discuss this case any further. 
	\end{rem}
	
	\subsection{Motivation for $\gl_{1|1}$-reproduction procedure} 
	We show that in parallel to the $\gl_2$ reproduction procedure, the eigenvalues of the Gaudin Hamiltonians corresponding to polynomials in the same $\gl_{1|1}$ population are the same.
	
	Let $y=\prod_{r=1}^l(x-t_r)$,  $\widetilde{y}=\prod_{r=1}^{\widetilde{l}}(x-\widetilde{t}_r)$. Let 
	 $\bm{t}=(t_1,\dots,t_l)$, $\widetilde{\bm{t}}=(\widetilde{t}_1,\dots,\widetilde{t}_{\widetilde{l}})$. 
	
	Let $h_k=p_k+q_k$, $k=1,\dots, n$. Let $\mc N(T)$ be the monic polynomial proportional to $\ln'\left(T_1^{\bm{s}}T_2^{\bm{s}}\right)\pi(T_1^{\bm{s}}T_2^{\bm{s}})$.
	
	From Theorem \ref{MVY14}, we have $\HH_kw^{\bs s}(\bs z,\bs t)=E_kw^{\bs s}(\bs z,\bs t)$ and 
	$\HH_kw^{\widetilde{\bs s}}(\bs z,\widetilde{\bs t})=\wt{E}_kw^{\widetilde{\bs s}}(\bs z,\widetilde{\bs t})$, where
	\be\label{**}
	E_k=s_1\sum_{\substack{r=1\\r\neq k}}^{n}\frac{p_kp_r-q_kq_r}{z_k-z_r}+s_1\sum_{j=1}^{l}\frac{h_k}{t_j-z_k}, \qquad 
	\wt{E}_k=\widetilde{s}_1\sum_{\substack{r=1\\r\neq k}}^{n}\frac{\widetilde{p}_k\widetilde{p}_r-\widetilde{q}_k\widetilde{q}_r}{z_k-z_r}+\widetilde{s}_1\sum_{j=1}^{\wt l}\frac{h_k}{\widetilde{t}_j-z_k}.
	\ee

	\begin{lem}\label{gl11wf}
			The eigenvalues $E_k$ and $\widetilde{E}_k$, $k=1,\dots,n$, of the Gaudin Hamiltonians are the same.
	\end{lem}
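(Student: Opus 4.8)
The plan is to show directly that the difference $E_k-\wt E_k$ vanishes for each $k$, working from the explicit formulas \eqref{**}. Two preliminary facts set the stage: since $\wt{\bs s}=\bs s^{[1]}$ interchanges the single even and odd indices, one has $\wt s_1=-s_1$; and for every typical module $\wt p_k+\wt q_k=(q_k+1)+(p_k-1)=p_k+q_k=h_k$, so the residue $h_k$ appearing in the ``$t$--$z$'' sums is the same for $\bs t$ and $\wt{\bs t}$. I would split $E_k$ and $\wt E_k$ each into a ``$z$--$z$'' part and a ``$t$--$z$'' part and compare the two parts separately.

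For the ``$z$--$z$'' parts the key input is the elementary identity
$$\wt p_k\wt p_r-\wt q_k\wt q_r=-(p_kp_r-q_kq_r)+h_k+h_r,$$
valid whenever both $\lambda^{(k)}$ and $\lambda^{(r)}$ are typical. For an index $r$ (or $k$) corresponding to an atypical module one has $p_r=q_r=\wt p_r=\wt q_r=h_r=0$, so all the relevant summands vanish on both sides and such terms may be discarded; in particular if $\lambda^{(k)}$ itself is atypical then $E_k=\wt E_k=0$ trivially and there is nothing to prove. Assuming $\lambda^{(k)}$ typical and using $\wt s_1=-s_1$ together with the identity, the difference of the ``$z$--$z$'' parts of $E_k$ and $\wt E_k$ collapses to $s_1\sum_r'(h_k+h_r)/(z_k-z_r)$, where the primed sum runs over typical $r\neq k$.

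For the ``$t$--$z$'' parts I would invoke the reproduction relation \eqref{gl1|1 rp}: the product $y\,\wt y$ is proportional to $\mc N(T)=\ln'(T_1^{\bs s}T_2^{\bs s})\,\pi(T_1^{\bs s}T_2^{\bs s})$, so the multiset of roots of $y\,\wt y$ is exactly $\{t_1,\dots,t_l\}\sqcup\{\wt t_1,\dots,\wt t_{\wt l}\}$. Hence
$$\sum_{j=1}^l\frac{1}{t_j-z_k}+\sum_{j=1}^{\wt l}\frac{1}{\wt t_j-z_k}=\sum_w\frac{1}{w-z_k}=-\ln'(y\,\wt y)(z_k),$$
where $w$ runs over the roots, and using $\wt s_1=-s_1$ the difference of the ``$t$--$z$'' parts becomes $-s_1h_k\,\ln'(y\,\wt y)(z_k)$. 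It then remains to evaluate $\ln'(y\,\wt y)(z_k)=\ln'(\mc N(T))(z_k)$. Writing $\mc N(T)\propto F\Pi$ with $F=\sum_{k'}h_{k'}/(x-z_{k'})$ and $\Pi=\prod_{k':\,h_{k'}\neq0}(x-z_{k'})$, and factoring out the pole--zero at $z_k$ via $F\Pi=(h_k+(x-z_k)F_{\neq k})\,\Pi_{\neq k}$ near $z_k$, a short logarithmic-derivative computation gives
$$h_k\,\ln'(y\,\wt y)(z_k)=\sum_r'\frac{h_k+h_r}{z_k-z_r},$$
the same primed sum as before. Combining the two parts, the residual terms cancel and $E_k=\wt E_k$.

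The routine algebra is all elementary; the one genuinely delicate step is the last one, the evaluation of $\ln'(y\,\wt y)(z_k)$, where $z_k$ is simultaneously a pole of $F$ and a zero of $\Pi$. The care needed there is to extract the regular factor correctly (equivalently, to compute the relevant residue) and to keep track of the typical-versus-atypical dichotomy, so that the poles of $F$ match the zeros of $\Pi$ exactly. Once this local computation is carried out, the two primed sums are manifestly equal and the lemma follows.
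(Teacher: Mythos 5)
Your proof is correct and follows essentially the same route as the paper's: both split off the atypical factors, reduce the ``$z$--$z$'' difference to $s_1\sum_r(h_k+h_r)/(z_k-z_r)$, combine the roots of $y$ and $\widetilde y$ into the full root set of $\mc N(T)$ via \eqref{gl1|1 rp}, and cancel against $h_k\ln'(\mc N(T))(z_k)$. The only cosmetic difference is how that logarithmic derivative is evaluated at $z_k$ (you factor the local behaviour of $F\Pi$; the paper expands $\mc N(T)$ as $(h_1+\dots+h_m)^{-1}\sum_k h_k\prod_{j\neq k}(x-z_j)$ and evaluates), and both computations yield the same identity.
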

	\begin{proof}
	Set $t_{l+r}=\widetilde t_r$, $r=1,\dots,\widetilde{l}$.
	
	If $p_k+q_k=0$, then $E_k=\widetilde{E}_k=0$. Without loss of generality, assume $p_k+q_k\neq 0$, $k=1,\dots, m$, $m>0$, and $p_k+q_k=0$, $k=m+1,\dots, n$, and consider $E_1-\widetilde{E}_1$. We have 
			\begin{equation}\label{*}
			s_1(E_1-\widetilde{E}_1)=\sum_{k=2}^{m}\frac{h_1+h_k}{z_1-z_k}+\sum_{r=1}^{m-1}\frac{h_1}{t_r-z_1}.
			\end{equation}
			The polynomial $\mc N(T)(x)$ is \[\mc N(T)(x)=\prod_{k=1}^{m-1}(x-t_k)=(h_1+\dots+h_m)^{-1}\sum_{k=1}^{m}h_k(x-z_1)\dots\widehat{(x-z_k)}\dots(x-z_m).\] Evaluate the function $\ln'(\mc N(T))$ at $z_1$ and we have \[\ln'(\mc N(T))(z_1)=\sum_{r=1}^{m-1}\frac{1}{z_1-t_r}=\sum_{k=2}^{m}\frac{h_1+h_k}{h_1(z_1-z_k)}.\] 
			Thus, the right-hand side of \eqref{*} is zero.
	\end{proof}

	\begin{cor}\label{e_{21}v}
			We have $e^{\bm{s}}_{21}w^{\bm{s}}(\bm{z},\bm{t})=cw^{\widetilde{\bm{s}}}(\bm{z},\widetilde{\bm{t}})$, for some non-zero constant $c$.
	\end{cor}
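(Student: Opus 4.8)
The plan is to combine the diagonal symmetry of the Gaudin Hamiltonians with the matching of eigenvalues already established in Lemma \ref{gl11wf}. The operator $e^{\bs s}_{21}=e^{\bs s}_{2,1}$ is the odd negative simple root vector for $\bs s$, so it lowers the weight by $\alpha^{\bs s}$ and, being part of the diagonal $\gl_{1|1}$ action, commutes with every $\HH_k$. Hence $e^{\bs s}_{21}w^{\bs s}(\bs z,\bs t)$ is again a joint eigenvector of the $\HH_k$, with the same eigenvalues $E_k$ as $w^{\bs s}(\bs z,\bs t)$ (Theorem \ref{MVY14}), and of weight $\lambda^{(\bs s,\infty)}-\alpha^{\bs s}=\wt\lambda^{(\wt{\bs s},\infty)}$.

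First I would show that $e^{\bs s}_{21}w^{\bs s}(\bs z,\bs t)$ is $\wt{\bs s}$-singular. The key observation, to be verified from $\sigma_{\wt{\bs s}}(1)=\sigma_{\bs s}(2)$, $\sigma_{\wt{\bs s}}(2)=\sigma_{\bs s}(1)$, is that in $\gl_{1|1}$ one has $e^{\wt{\bs s}}_{12}=e^{\bs s}_{21}$, so that $\mathfrak n^+_{\wt{\bs s}}$ is generated by $e^{\bs s}_{21}$ itself. Since $e^{\bs s}_{21}$ is odd, its square vanishes on the tensor product, whence $e^{\wt{\bs s}}_{12}\big(e^{\bs s}_{21}w^{\bs s}\big)=(e^{\bs s}_{21})^2 w^{\bs s}=0$. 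Next I would check non-vanishing: because $w^{\bs s}$ is $\bs s$-singular we have $e^{\bs s}_{12}w^{\bs s}=0$, so the superbracket relation $[e^{\bs s}_{12},e^{\bs s}_{21}]=e^{\bs s}_{11}+e^{\bs s}_{22}$ gives $e^{\bs s}_{12}e^{\bs s}_{21}w^{\bs s}=(p+q)\,w^{\bs s}$. As $\bs\lambda$ is typical we have $p+q\neq0$, and $w^{\bs s}\neq0$, so $e^{\bs s}_{21}w^{\bs s}\neq0$.

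At this point $e^{\bs s}_{21}w^{\bs s}(\bs z,\bs t)$ and $w^{\wt{\bs s}}(\bs z,\wt{\bs t})$ are both non-zero $\wt{\bs s}$-singular vectors of weight $\wt\lambda^{(\wt{\bs s},\infty)}$, and both are joint eigenvectors of the $\HH_k$ with the same eigenvalues, using $\wt E_k=E_k$ from Lemma \ref{gl11wf}. To conclude that they are proportional, with necessarily non-zero constant $c$, I would invoke that for generic $\bs z$ the joint spectrum of the Gaudin Hamiltonians on the $\wt{\bs s}$-singular weight space is simple, so that each joint eigenspace is one-dimensional and spanned by the Bethe vector $w^{\wt{\bs s}}(\bs z,\wt{\bs t})$.

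The main obstacle is exactly this last step: justifying that the relevant joint eigenspace is one-dimensional, which I expect to require genericity of $\bs z$ together with the $\gl_{1|1}$ completeness statement that Bethe vectors form a basis of the singular subspace. A genericity-free alternative would be a direct computation: each typical factor $L(p_k,q_k)$ is two-dimensional, spanned by $v_k$ and $e^{\bs s}_{21}v_k$, while atypical factors are one-dimensional, so both $e^{\bs s}_{21}w^{\bs s}$ and $w^{\wt{\bs s}}$ are supported on the configurations with exactly $l+1$ typical tensor factors in the lowered state; one would then match the coefficient of each such configuration through a residue identity for the rational functions $w_{\bs I}$. This route is more laborious but avoids any appeal to simplicity of the spectrum.
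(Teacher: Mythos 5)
Your proposal is correct and follows essentially the same route as the paper: both arguments rest on the eigenvalue matching of Lemma \ref{gl11wf} together with, for generic $\bs z$, the simplicity of the joint spectrum of the Gaudin Hamiltonians on the singular subspace and the non-vanishing of the Bethe vectors, concluding proportionality from equality of eigenvalues. The only cosmetic difference is that you verify directly that $e^{\bs s}_{21}w^{\bs s}$ is a non-zero $\wt{\bs s}$-singular vector (via $e^{\wt{\bs s}}_{12}=e^{\bs s}_{21}$, $(e^{\bs s}_{21})^2=0$, and the bracket relation giving the factor $p+q\neq 0$), whereas the paper packages the same facts by placing $w^{\bs s}$ and $w^{\wt{\bs s}}$ in a common irreducible two-dimensional submodule whose weights differ by $\alpha^{\bs s}$.
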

	\begin{proof}
	It follows from the results of \cite{MVY14} that for generic $\bs z$, the Gaudin Hamiltonians $\HH_k$ acting in $(L(\bs \lambda))^{{\bs s}{\rm ing}}=(\otimes_k L(\lambda^{k}))^{{\bs s}{\rm ing}}$	have joint simple spectrum. Moreover, for generic $\bs z$, $w^{\bm{s}}(\bm{z},\bm{t})\neq 0$ and
	$w^{\widetilde{\bm{s}}}(\bm{z},\widetilde{\bm{t}})\neq 0$. 
	
	Therefore, $w^{\bm{s}}(\bm{z},\bm{t})$ and $w^{\widetilde{\bm{s}}}(\bm{z},\widetilde{\bm{t}})$ belong to the same irreducible two-dimensional submodule of  $L(\bs \lambda)$. Moreover, their weights are related by $\lambda^{(\bs s,\infty)}=\wt{\lambda}^{(\widetilde{\bs s},\infty)}+\alpha^{\bs s}$. The corollary follows.
	\end{proof}
		
	Define a rational pseudodifferential operator:
	$$
	R^{\bs s}(y)=\left(\partial-s_1\ln'\frac{T^{\bs s}_1}{y}\right)^{s_1}\left(\partial-s_2\ln'(T^{\bs s}_2y)\right)^{s_2}.
	$$

	\begin{lem}\label{typlem} If $\bs\la$ is typical, then  $R^{\bs s}(y)$ is a $(1|1)$-rational pseudodifferential operator. If $\bs\la$ is not typical, then $R^{\bs s}(y)=1$.
	
	Let $\bs\la$ be typical. The rational pseudodifferential operator does not depend on a choice of a polynomial in a population:
	$R^{\bs s}(y)=R^{\wt{\bs s}}(\wt{y})$. 
	\end{lem}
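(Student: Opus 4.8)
The plan is to treat the two assertions separately, using Proposition \ref{CDSK12b} for the first and the Ore identity \eqref{Ore} for the second. Throughout write $P:=T_1^{\bs s}T_2^{\bs s}$, and recall that $\bs\la$ is typical precisely when $P$ is non-constant.

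For the first assertion I would take $\bs s=(1,-1)$, the ordering $\bs s=(-1,1)$ being handled identically after one application of \eqref{Ore}. Then $R^{\bs s}(y)=(\partial-\ln'(T_1^{\bs s}/y))(\partial-\ln'(1/(T_2^{\bs s}y)))^{-1}=:D_{\bar 0}D_{\bar 1}^{-1}$ is a fractional factorization into monic first-order operators whose kernels are the one-dimensional rational spaces $\ker D_{\bar 0}=\C\,T_1^{\bs s}/y$ and $\ker D_{\bar 1}=\C/(T_2^{\bs s}y)$. These two lines coincide exactly when $P$ is constant, i.e. when $\bs\la$ is not typical. Hence for typical $\bs\la$ one has $\ker D_{\bar 0}\cap\ker D_{\bar 1}=0$, and since each of $D_{\bar 0},D_{\bar 1}$ is a first-order operator with one-dimensional kernel, Proposition \ref{CDSK12b}(3) identifies $D_{\bar 0}D_{\bar 1}^{-1}$ as the minimal fractional factorization, of orders $(1,1)$; thus $R^{\bs s}(y)$ is a $(1|1)$-rational pseudodifferential operator. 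When $\bs\la$ is not typical all $p_k=q_k=0$, so $T_1^{\bs s}=T_2^{\bs s}=1$ and $y=1$ by Remark \ref{gl11rem}, giving $R^{\bs s}(y)=\partial^{s_1}\partial^{s_2}=\partial^{s_1+s_2}=1$.

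For the invariance $R^{\bs s}(y)=R^{\wt{\bs s}}(\wt y)$ I would again fix $\bs s=(1,-1)$, so $\wt{\bs s}=(-1,1)$ and the two operators take exactly the shapes $(\partial-a)(\partial-b)^{-1}$ and $(\partial-c)^{-1}(\partial-d)$ governed by \eqref{Ore}, with $a=\ln'(T_1^{\bs s}/y)$, $b=\ln'(1/(T_2^{\bs s}y))$, $c=\ln'(\wt y/T_1^{\wt{\bs s}})$ and $d=\ln'(T_2^{\wt{\bs s}}\wt y)$. By \eqref{Ore} the equality is equivalent to the two conditions $d=a+\ln'(a-b)$ and $c=b+\ln'(a-b)$. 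A short computation gives $a-b=\ln'P$, so $\ln'(a-b)=\ln'(\ln'P)$. From the definitions of this subsection I would read off the key bookkeeping identities $T_1^{\wt{\bs s}}=T_2^{\bs s}\,\pi(P)$ and $T_2^{\wt{\bs s}}=T_1^{\bs s}/\pi(P)$. Substituting them, both Ore conditions collapse to the single relation $\ln'(y\wt y)=\ln'(\ln'(P)\,\pi(P))$, which is precisely the $\gl_{1|1}$ reproduction relation $y\wt y\propto\ln'(T_1^{\bs s}T_2^{\bs s})\,\pi(T_1^{\bs s}T_2^{\bs s})$ of Lemma \ref{gl1|1 rplemma}. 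This establishes the equality; the opposite parity ordering is the same equation read from right to left, and requires no separate argument since the reproduction is involutive.

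The computations here are elementary, so I do not expect a genuine obstacle; the only care needed is in the parity-dependent placement of the exponents $s_i$ in the definition of $R^{\bs s}(y)$ and in correctly distributing the radical $\pi(P)$ between the $T_i^{\bs s}$ and the $T_i^{\wt{\bs s}}$. The one step I would verify most carefully is that both Ore conditions reduce to the same reproduction identity: this is exactly what upgrades the expected covariance of $R$ under a change of Borel subalgebra to honest invariance along the population.
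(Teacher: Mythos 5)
Your proposal is correct, and it is essentially the computation the paper has in mind: the paper's own proof of Lemma \ref{typlem} consists of the single line ``proved by a direct computation,'' and your argument supplies exactly that computation — Proposition \ref{CDSK12b}(3) applied to the two one-dimensional kernels $\C\,T_1^{\bs s}/y$ and $\C/(T_2^{\bs s}y)$ for the $(1|1)$ claim, and the Ore identity \eqref{Ore} combined with $T_1^{\wt{\bs s}}=T_2^{\bs s}\pi(P)$, $T_2^{\wt{\bs s}}=T_1^{\bs s}/\pi(P)$ to reduce both Ore conditions to the reproduction relation \eqref{gl1|1 rp}. I verified the bookkeeping (including $a-b=\ln'P$ and the collapse of both conditions to $\ln'(y\wt y)=\ln'(\ln'(P)\pi(P))$) and found no gaps.
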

	\begin{proof}
	The lemma is proved by a direct computation.
	\end{proof}

	\section{Reproduction procedure for $\gl_{M|N}$}\label{sec: repglmn}
	We define the reproduction procedure and populations in the general case.
	
	\subsection{Reproduction procedure}\label{6.1}
	Let $\bm{s}\in S_{M|N}$ be a parity sequence. Let $\bm{\lambda}=(\lambda^{(1)},\dots,\lambda^{(n)})$ be a sequence of polynomial $\gl_{M|N}$ weights. Let $\bm{z}=(z_1,\dots,z_n)$ be a sequence of pairwise distinct complex numbers. Let $\bm{T}^{\bm{s}}$ be a sequence of polynomials associated to $\bm{s}$, $\bm{\lambda}$, and $\bm{z}$, see \eqref{T fun}. Denote $\pi\left(T^{\bs s}_i(T^{\bs s}_{i+1})^{-s_is_{i+1}}\right)$ by $\pi^{\bs s}_i$.
	
	 For $i\in \{1,\dots, M+N-1\}$, set 
$\bm{s}^{[i]}=(s_1,\dots,s_{i+1},s_{i},\dots,s_{M+N})$. 
	
	\begin{lem}\label{change in T} If $s_i=s_{i+1}$, then $\bm{T}^{\bm{s}^{[i]}}=\bm{T}^{\bm{s}}$ and if $s_i\neq s_{i+1}$, then 
	\[\bm{T}^{\bm{s}^{[i]}}=(T_1^{\bm{s}},\dots, T^{\bm{s}}_{i+1}\pi_i^{\bs s},T^{\bm{s}}_{i}(\pi_i^{\bs s})^{-1}, \dots,T^{\bm{s}}_{M+N}).\]
	\end{lem}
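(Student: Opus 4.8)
The plan is to deduce the lemma directly from the single-module weight-transformation rule \eqref{si}, applied separately to each polynomial module $L(\lambda^{(k)})$, and then to match exponents against the definition \eqref{T fun} of $\bm{T}^{\bs s}$. Throughout I fix $k$ and abbreviate the two relevant $\bs s$-coordinates by $a_k=(\lambda^{(k)})^{\bs s}(e^{\bs s}_{ii})$ and $b_k=(\lambda^{(k)})^{\bs s}(e^{\bs s}_{i+1,i+1})$, so that $T_i^{\bs s}=\prod_k(x-z_k)^{a_k}$ and $T_{i+1}^{\bs s}=\prod_k(x-z_k)^{b_k}$.

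The case $s_i=s_{i+1}$ is immediate: swapping two equal entries leaves the sequence unchanged, $\bs s^{[i]}=\bs s$, whence $\bm{T}^{\bs s^{[i]}}=\bm{T}^{\bs s}$ and there is nothing to prove. So I focus on $s_i\neq s_{i+1}$. Applying \eqref{si} to each $L(\lambda^{(k)})$, all coordinates other than the $i$-th and $(i+1)$-st are unchanged, while the $i$-th becomes $b_k+\delta_k$ and the $(i+1)$-st becomes $a_k-\delta_k$, where $\delta_k=1$ if $a_k+b_k\neq0$ and $\delta_k=0$ otherwise. Substituting these exponents into \eqref{T fun} gives $T_j^{\bs s^{[i]}}=T_j^{\bs s}$ for $j\neq i,i+1$, together with
\[
T_i^{\bs s^{[i]}}=T_{i+1}^{\bs s}\prod_{k=1}^n(x-z_k)^{\delta_k},\qquad
T_{i+1}^{\bs s^{[i]}}=T_i^{\bs s}\Big(\prod_{k=1}^n(x-z_k)^{\delta_k}\Big)^{-1}.
\]

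The final step is to identify $\prod_k(x-z_k)^{\delta_k}$ with $\pi_i^{\bs s}$. Since $s_i\neq s_{i+1}$ we have $-s_is_{i+1}=1$, so $T_i^{\bs s}(T_{i+1}^{\bs s})^{-s_is_{i+1}}=T_i^{\bs s}T_{i+1}^{\bs s}=\prod_k(x-z_k)^{a_k+b_k}$. By the defining property of $\pi$, the polynomial $\pi(T_i^{\bs s}T_{i+1}^{\bs s})$ is monic and vanishes simply at exactly those $z_k$ with $a_k+b_k\neq0$; the sign of $a_k+b_k$ is immaterial, since $\pi(f)$ vanishes at a point whenever either $f$ or $1/f$ does. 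Hence $\pi_i^{\bs s}=\prod_{k:\,a_k+b_k\neq0}(x-z_k)=\prod_k(x-z_k)^{\delta_k}$, which is precisely the claimed form. I expect the only delicate point to be this bookkeeping of $\pi$ on factors of possibly negative exponent; there is no genuine obstacle, as \eqref{si} supplies the coordinate change module-by-module and everything else is substitution into \eqref{T fun}.
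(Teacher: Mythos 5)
Your proof is correct and follows exactly the route the paper intends: the paper's own proof is the one-line remark that the lemma ``follows from \eqref{si}'', and your argument is precisely the module-by-module application of \eqref{si} followed by substitution into \eqref{T fun} and the identification of $\prod_k(x-z_k)^{\delta_k}$ with $\pi_i^{\bs s}$ via the definition of $\pi$. The bookkeeping, including the observation that $\pi$ is insensitive to the sign of the exponent $a_k+b_k$, is handled correctly.
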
 
\begin{proof} This follows from \eqref{si}. \end{proof}
	Let $\bm{l}=(l_1,\dots,l_{M+N-1})$ be a sequence of nonnegative integers.
	
	We reformulate the BAE \eqref{BAE} and construct a family of new solutions as follows. By convention, we set $y_0=y_{M+N}=1$. 
	\begin{thm}\label{GRP}  Let $\bm{y}=(y_1,\dots,y_{M+N-1})$ be a sequence of polynomials generic with respect to $\bm{s}$, $\bm{\lambda}$, and $\bm{z}$, such that $\deg y_k=l_k$, $k=1,\dots,M+N-1$.
		\begin{enumerate}
			\item The sequence $\bm{y}$ represents a solution of the BAE \eqref{BAE} associated to $\bm{s}$, $\bm{z}$, $\bm{\lambda}$, and $\bm{l}$, if and only if for each $i=1,\dots,M+N-1$, there exists a polynomial $\widetilde{y}_i$, such that
			\begin{align}
			\Wr\left(y_i,\widetilde{y}_i\right)=T^{\bm{s}}_i\left(T^{\bm{s}}_{i+1}\right)^{-1}y_{i-1}y_{i+1} &\quad\mbox{if}\;s_i=s_{i+1},\label{brp}\\
			y_i\,\widetilde{y}_i=\ln'\left(\frac{T^{\bm{s}}_iT^{\bm{s}}_{i+1}y_{i-1}}{y_{i+1}}\right)\pi^{\bs s}_iy_{i-1}y_{i+1} &\quad\mbox{if}\;s_i\neq s_{i+1}.\label{frp}
			\end{align}
			\item Let $i\in \{1,\dots, M+N-1\}$ be such that $\wt y_i\neq 0$. Then if $\bm{y}^{[i]}=(y_1,\dots,\widetilde{y}_i,\dots,y_{M+N-1})$ 
	 is  generic with respect to $\bm{s}^{[i]}$, $\bm{\lambda}$, and $\bm{z}$, then $\bm{y}^{[i]}$ represents a solution of the BAE associated to $\bm{s}^{[i]}$, $\bm{\lambda}$, $\bm{z}$, and $\bm{l}^{[i]}$, where
			$\bm{l}^{[i]}=(l_1,\dots,\widetilde{l}_i,\dots,l_{M+N-1})$, $\widetilde{l}_i=\deg \widetilde{y}_i$.
		\end{enumerate}
	\end{thm}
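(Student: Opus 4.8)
The plan is to reduce the $\gl_{M|N}$ statement to the rank-one results already established in Lemma \ref{MVpopulations} and Lemma \ref{gl1|1 rplemma}, using that the symmetrized Cartan matrix of $\bs s$ is tridiagonal. The first step is to express the $\bs z$-part of \eqref{BAE} through the polynomials $T^{\bs s}_i$: a direct computation with the form $(\ ,\ )$ gives $((\lambda^{(k)})^{\bs s},\alpha^{\bs s}_i)=s_i(\lambda^{(k)})^{\bs s}(e^{\bs s}_{ii})-s_{i+1}(\lambda^{(k)})^{\bs s}(e^{\bs s}_{i+1,i+1})$, so that $\sum_k((\lambda^{(k)})^{\bs s},\alpha^{\bs s}_i)/(x-z_k)=s_i\ln' T^{\bs s}_i-s_{i+1}\ln' T^{\bs s}_{i+1}$. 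Combined with the explicit entries $(\alpha^{\bs s}_i,\alpha^{\bs s}_i)=s_i+s_{i+1}$, $(\alpha^{\bs s}_{i-1},\alpha^{\bs s}_i)=-s_i$, $(\alpha^{\bs s}_{i+1},\alpha^{\bs s}_i)=-s_{i+1}$, this shows that the equations \eqref{BAE} for the variables of colour $i$ involve only $y_{i-1},y_i,y_{i+1}$ and $T^{\bs s}_i,T^{\bs s}_{i+1}$, the neighbours entering solely through $\ln' y_{i-1}$ and $\ln' y_{i+1}$ evaluated at the colour-$i$ roots. Treating $y_{i\pm1}$ as fixed, the colour-$i$ subsystem is an effective rank-one BAE.

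For part (1) I then split on the parity of $\alpha^{\bs s}_i$. If $s_i=s_{i+1}$, multiplying through by $s_i$ turns the colour-$i$ subsystem into the $\gl_2$ equation \eqref{sl2BAE} for the data $T_1=T^{\bs s}_iy_{i-1}y_{i+1}$, $T_2=T^{\bs s}_{i+1}$, so Lemma \ref{MVpopulations}(1) supplies $\widetilde y_i$ with $\Wr(y_i,\widetilde y_i)=T_1T_2^{-1}$, which is \eqref{brp}. If $s_i\neq s_{i+1}$ the self-interaction term vanishes and the subsystem becomes $\ln'(f)(t_j)=0$ with $f=T^{\bs s}_iT^{\bs s}_{i+1}y_{i-1}/y_{i+1}$. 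The right-hand side $\mc N_i$ of \eqref{frp} equals $\ln'(f)\,\pi^{\bs s}_iy_{i-1}y_{i+1}$, which is a polynomial since $\pi^{\bs s}_iy_{i-1}y_{i+1}$ clears the simple poles of $\ln'(f)$; as $y_i$ is coprime to $\pi^{\bs s}_iy_{i-1}y_{i+1}$ by the genericity conditions, $y_i\mid\mc N_i$ is equivalent to $\ln'(f)$ vanishing at each root of $y_i$ to at least its multiplicity. The latter is precisely the colour-$i$ equation together with the multiplicity condition of Section \ref{sec: BAE}, and gives \eqref{frp} with $\widetilde y_i=\mc N_i/y_i$, mirroring Lemma \ref{gl1|1 rplemma}(1).

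For part (2) I work under the standing hypothesis of part (1) that $\bs y$ solves the BAE for $\bs s$, and verify the BAE for $\bs s^{[i]}$ and $\bs y^{[i]}$ one colour at a time. The colours $r\le i-2$ and $r\ge i+2$ are untouched: their Cartan entries, their polynomials $T^{\bs s^{[i]}}_r,T^{\bs s^{[i]}}_{r+1}$ (by Lemma \ref{change in T}) and all of $y_{r-1},y_r,y_{r+1}$ are unchanged, so their equations persist verbatim. For the reproduced colour $r=i$, Lemma \ref{MVpopulations}(2) (if $s_i=s_{i+1}$) or Lemma \ref{gl1|1 rplemma}(2) (if $s_i\neq s_{i+1}$) shows $\widetilde y_i$ solves the effective subsystem; in the odd case one uses $T^{\bs s^{[i]}}_iT^{\bs s^{[i]}}_{i+1}=T^{\bs s}_iT^{\bs s}_{i+1}$ (Lemma \ref{change in T}) to see that the effective equation is literally the same for $\bs s$ and $\bs s^{[i]}$. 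The crux is the neighbouring colours $r=i\pm1$: there, passing to $\bs s^{[i]}$ both replaces $y_i$ by $\widetilde y_i$ and, in the odd case, changes $T^{\bs s}_i,T^{\bs s}_{i+1}$ and flips the relevant off-diagonal Cartan entries. Subtracting the $\bs s$-equation from the $\bs s^{[i]}$-equation at a root $t_j$ of $y_{i\pm1}$, the difference should collapse to one local identity: in the even case $\ln' y_i(t_j)=\ln'\widetilde y_i(t_j)$, which follows from \eqref{brp} because $\Wr(y_i,\widetilde y_i)$ carries the factor $y_{i\pm1}$ and hence vanishes at $t_j$ (here $y_i(t_j),\widetilde y_i(t_j)\neq0$ by genericity of $\bs y$ and of $\bs y^{[i]}$); in the odd case $\ln'(y_i\widetilde y_i)(t_j)=\ln'(T^{\bs s}_iT^{\bs s}_{i+1}\pi^{\bs s}_i)(t_j)+y_{i\pm 1}''(t_j)/y_{i\pm 1}'(t_j)$, which follows from \eqref{frp} by a short computation using $y_i\widetilde y_i=\mc N_i$, valid because $t_j$ is a simple root of $y_{i\pm1}$.

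I expect the neighbouring-colour computation for odd $i$ to be the main obstacle. It is the only place where the transformation rule of Lemma \ref{change in T}, the sign changes in the Cartan matrix, and the exact shape of the right-hand side of \eqref{frp} must all conspire, and where the genericity hypotheses enter essentially — in particular the simplicity of $y_{i\pm1}$ at $t_j$, which in some sub-cases is forced not by genericity of $\bs y$ with respect to $\bs s$ but by genericity of $\bs y^{[i]}$ with respect to $\bs s^{[i]}$ (the root $i\pm1$ being even for exactly one of the two parity sequences). Everything else is a direct appeal to the rank-one lemmas or a routine check that the unaffected data does not change.
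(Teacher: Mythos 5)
Your proposal is correct and follows essentially the same route as the paper: part (1) by reduction of each colour-$i$ subsystem to Lemma \ref{MVpopulations} or Lemma \ref{gl1|1 rplemma}, and part (2) by checking the BAE colour by colour, with the neighbouring colours $i\pm1$ handled exactly as in the paper — dividing \eqref{brp} by $y_i\widetilde y_i$ in the even case, and taking the logarithmic derivative of \eqref{frp} at a root of $y_{i\pm1}$ in the odd case, yielding the same local identity $\ln'(y_i\widetilde y_i)(t_j)=\ln'(T^{\bs s}_iT^{\bs s}_{i+1}\pi^{\bs s}_i)(t_j)+y_{i\pm1}''(t_j)/y_{i\pm1}'(t_j)$. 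Your observation about which genericity hypothesis forces simplicity of the roots of $y_{i\pm1}$ matches the parenthetical remark in the paper's proof.
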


	\begin{proof} 
		Part {\it (1)} follows from Lemma \ref{MVpopulations} and Lemma \ref{gl1|1 rplemma}.
		
		We prove Part {\it (2)}. Let $y_r=\prod_{j=1}^{l_r}(x-t^{(r)}_j)$, $r=1,\dots,M+N-1$, and $\widetilde{y}_i= \prod_{j=1}^{\wt{l}_i}(x-\widetilde{t}^{\;(i)}_j)$. Let $\bs t=(t_j^{(r)})_{r=1,\dots, M+N-1}^{j=1,\dots,l_r}$ and $\wt{\bs t}=(\wt {t}_j^{\;(r)})_{r=1,\dots, M+N-1}^{j=1,\dots,\wt{l}_r}$, where we set $l_r=\wt{l}_r, t_j^{(r)}=\wt {t}_j^{\;(r)}$ if $r\neq i$. The tuple ${\bs t}$ satisfies the BAE associated to $\bm{s}$, $\bm{\lambda}$, $\bm{z}$, and $\bm{l}$. We prove the Bethe ansatz equation for $\wt{\bs t}$ associated to $\bm{s}^{[i]}$, $\bm{\lambda}$, $\bm{z}$, and $\bm{l}^{[i]}$. The BAE for $\wt{\bs t}$ related to $\widetilde{t}^{\;(i)}_j$  holds by Lemma \ref{MVpopulations} and Lemma \ref{gl1|1 rplemma}. The BAEs for $\wt {\bs t}$ and ${\bs t}$ related to $t^{(r)}_j$, $|r-i|>1$, are the same. We treat the non-trivial cases.

		Consider the case of $s_{i}=s_{i+1}$. Dividing \eqref{brp} by $y_i\wt{y}_i$ and evaluating at $x=t^{(i\pm1)}_j$, we obtain \[\sum_{a=1}^{l_i}\frac{1}{t^{(i\pm1)}_j-t^{(i)}_a}=\sum_{a=1}^{\widetilde{l}_i}\frac{1}{t^{(i\pm1)}_j-\widetilde{t}^{\;(i)}_a}.\] Thus, the BAE for $\wt{\bs t}$ related to $t^{(i\pm1)}_j$ follows from the BAE for $\bs t$ related to $t^{(i\pm1)}_j$.
			
		Consider the case of $s_{i}=-s_{i+1}=1$. The argument depends on $s_{i-1}$, $s_{i+2}$. 
		Consider for example the case of $s_{i-1}=-s_{i+2}=1$.
		
		We prove the BAE for $\wt{\bs t}$ related to $t^{(i-1)}_j$:
		\begin{equation}
			-\sum_{k=1}^{n}\frac{(\lambda^{(k)})^{\bm{s}}_{[\bs s],i-1}+(\lambda^{(k)})^{\bm{s}}_{[\bs  s],i+1}+\delta}{t^{(i-1)}_j-z_k}+\sum_{r=1}^{l_{i-2}}\frac{-1}{t^{(i-1)}_j-t^{(i-2)}_r}+\sum_{r=1}^{\widetilde{l}_{i}}\frac{1}{t^{(i-1)}_j-\widetilde{t}^{\;(i)}_r}=0,\label{t_i}
		\end{equation}
		where	$\delta=1$	if $(\lambda^{(k)})^{\bm{s}}_{[\bs s],i}+(\lambda^{(k)})^{\bm{s}}_{[\bs s],i+1}\neq0$ and $\delta=0$ otherwise.
		
		The BAE for $\bs t$ related to $t^{(i-1)}_j$ is 
		\begin{equation}
			-\sum_{k=1}^{n}\frac{(\lambda^{(k)})^{\bm{s}}_{[\bs s],i-1}-(\lambda^{(k)})^{\bm{s}}_{[\bs s,]i}}{t^{(i-1)}_j-z_k}+\sum_{r=1}^{l_{i-2}}\frac{-1}{t^{(i-1)}_j-t^{(i-2)}_r}+\sum_{r=1}^{l_{i}}\frac{-1}{t^{(i-1)}_j-t^{(i)}_r}+\sum_{\substack{r=1\\r\neq j}}^{l_{i-1}}\frac{2}{t^{(i-1)}_j-t^{(i-1)}_r}=0.\label{t_i-1}\end{equation}

		Take the logarithmic derivative of equation \eqref{frp} for $y_i$ and evaluate it at $t^{(i-1)}_j$. The left-hand side is 
		\begin{align*}
			& \ln'(y_i\widetilde{y}_i)\Big |_{x=t^{(i-1)}_j}=\sum_{r=1}^{l_{i}}\frac{1}{t^{(i-1)}_j-t^{(i)}_r}+\sum_{r=1}^{\widetilde{l}_{i}}\frac{1}{t^{(i-1)}_j-\widetilde{t}^{\;(i)}_r}&
		\end{align*}
		and the right-hand side is
		\begin{align*}
		\ln'\big(\ln' &\left(T^{\bm{s}}_iT^{\bm{s}}_{i+1}  y_{i-1}y_{i+1}^{-1}\right)\pi^{\bs s}_iy_{i-1}y_{i+1}\big) \Big|_{x=t^{(i-1)}_j}\\
		&\hspace{30pt}=\left(\ln'( T^{\bm{s}}_iT^{\bm{s}}_{i+1}) \pi^{\bs s}_i y_{i-1}'y_{i+1}+(\pi^{\bs s}_i y_{i-1}'y_{i+1})'-
		\pi^{\bs s}_i y_{i-1}'y_{i+1}'\right)/(\pi^{\bs s}_i y_{i-1}'y_{i+1})\Big|_{x=t_j^{i-1}}\\
		&\hspace{30pt}=\sum_{k=1}^{n}\frac{(\lambda^{(k)})^{\bs s}_{[\bs s],i}+(\lambda^{(k)})^{\bs s}_{[\bs s],i+1}+\delta}{t^{(i-1)}_j-z_k}+\sum_{\substack{r=1\\r\neq j}}^{l_{i-1}}\frac{2}{t^{(i-1)}_j-t^{(i-1)}_r}.
		\end{align*}
(Note here that the $t^{(i-1)}_j$ are all distinct, by the assumption that $\bs y^{[i]}$ is generic.) 
	The difference of the right-hand side and the left-hand side is exactly the difference between \eqref{t_i} and \eqref{t_i-1}. 
	
	The BAE for $\wt{\bs t}$ related to  $t^{(i+1)}_j$ is proved by a similar computation. 
		
	All other cases are similar, we omit further details. 
	\end{proof}
	If $s_i=s_{i+1}$, then starting from $\bs y$ we construct a family of new sequences $\bm{y}^{[i]}$, isomorphic to $\C$, by using \eqref{brp}. We call this construction the \emph{bosonic reproduction procedure in $i$-th direction}. 
	If $s_i\neq s_{i+1}$, and $T_i^{\bs s}T_{i+1}^{\bs s}y_{i-1}\neq cy_{i+1}$, $c\in\C^\times$, 
	then starting from $\bs y$ we construct a single new sequence $\bm{y}^{[i]}$  by using \eqref{frp}. We call this construction the \emph{fermionic reproduction procedure in $i$-th direction}. From the definition of fermionic reproduction procedure, $(\bm{y}^{[i]})^{[i]}=\bm{y}$. 
	
	If $\bm{y}^{[i]}$ is  generic with respect to $\bm{s}^{[i]}$, $\bm{\lambda}^{[i]}$, and $\bm{z}$, then by Theorem \ref{GRP}, we can apply the reproduction procedure again.

	Bosonic reproduction procedures fix parity sequences, while fermionic reproductions procedures change parity sequences. Denote by 
\be P_{(\bm{y},\bs{s} )}\subset (\mathbb{P}(\C[x]))^{M+N-1}\times S_{M|N}\nn\ee 
the closure of the set of all pairs $(\tilde{\bs y}, \tilde{\bs s})$ obtained from the initial pair $(\bs y,\bs s)$ by repeatedly applying all possible reproductions. We call $P_{(\bs y,\bs s)}$  the \emph{$\gl_{M|N}$ population of solutions of the BAE associated to $\bm{s}$, $\bm{z}$, $\bm{\lambda}$, and $\bm{l}$, originated at $\bm{y}$}.
By definition, $P_{(\bs y,\bs s)}$ decomposes as a disjoint union over parity sequences, 
\[P_{(\bs y,\bs s)}=\bigsqcup_{\wt{\bm{s}}\in S_{M|N}}P^{\tilde{\bs s}}_{(\bs y,\bs s)},  \qquad
P^{\wt{\bs s}}_{(\bs y,\bs s)}  = P_{(\bm{y},\bs{s} )} \cap \left((\mathbb{P}(\C[x]))^{M+N-1}\times \{\wt{\bs s}\}\right). \]  

\subsection{Rational pseudodifferential operator associated to population}
We define a rational pseudodifferential operator  which does not change under the reproduction procedure.

Let $\bm{s}\in S_{M|N}$ be a parity sequence. Let $\bm{z}=(z_1,\dots,z_n)$ be a sequence of pairwise distinct complex numbers. Let $\bm{\lambda}=(\lambda^{(1)},\dots,\lambda^{(n)})$ be a sequence of polynomial $\gl_{M|N}$ weights. The sequence $\bm{T}^{\bm{s}}=(T^{\bm{s}}_1,\dots,T^{\bm{s}}_{M+N})$ is given by \eqref{T fun}.

Let $\bm{y}=(y_1,\dots,y_{M+N-1})$ be a sequence of polynomials. Recall our convention that $y_0=y_{M+N}=1$. Define a rational pseudodifferential operator $R$ over $\C(x)$, \begin{equation}\label{RPDO of Pop}R^{\bm{s}}(\bs y)=\left(\partial-s_1\ln'\frac{T_1^{\bs s}y_0}{y_1}\right)^{s_1}\left(\partial-s_2\ln'\frac{T_2^{\bs s}y_1}{y_2}\right)^{s_2}\,\dots\,\left(\partial-s_{M+N}\ln'\frac{T_{M+N}^{\bs s}y_{M+N-1}}{y_{M+N}}\right)^{s_{M+N}}.
\end{equation}

The following theorem is the main result of this section. 
\begin{thm}\label{Diff of Popu}
	Let $P$ be a $\gl_{M|N}$ population. Then the rational pseudodifferential operator $R^{\bm{s}}(\bs y)$ does not depend on the choice of ${\bs y}$ in $P$.
\end{thm}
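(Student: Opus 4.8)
The plan is to reduce the global statement to the invariance of $R^{\bm s}(\bs y)$ under a single elementary reproduction step, and then treat the bosonic and fermionic steps separately. A population is generated from $(\bs y,\bs s)$ by repeatedly applying the procedures of Theorem \ref{GRP}, so it is enough to show that $R$ is unchanged by one step. A step in direction $i$ alters only the polynomials in positions $i-1,i,i+1$ and, in the fermionic case, the polynomials $T_i^{\bs s},T_{i+1}^{\bs s}$ in positions $i,i+1$ via Lemma \ref{change in T}. Consequently every factor of the product \eqref{RPDO of Pop} other than the two in positions $i$ and $i+1$ is literally unchanged, and it suffices to prove that the product of those two factors is invariant; the remaining factors then match term by term.

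\emph{Bosonic step} ($s_i=s_{i+1}$). Here the two factors have the same parity, so their product is a monic second-order differential operator (when $s_i=1$) or the inverse of one (when $s_i=-1$); in either case it is determined by its kernel, which I would compute explicitly. For $s_i=1$ the product is $\big(\partial-\ln'\frac{T_i^{\bs s}y_{i-1}}{y_i}\big)\big(\partial-\ln'\frac{T_{i+1}^{\bs s}y_i}{y_{i+1}}\big)$. Writing $f=T_i^{\bs s}y_{i-1}/y_i$ and $g=T_{i+1}^{\bs s}y_i/y_{i+1}$, the inner factor has kernel $\Span(g)$, and solving $(\partial-\ln'g)h=f$ gives a second kernel element $h=g\int(f/g)$. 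The Wronskian relation \eqref{brp} yields $f/g=\Wr(y_i,\widetilde y_i)/y_i^2=(\widetilde y_i/y_i)'$, hence $h=T_{i+1}^{\bs s}\widetilde y_i/y_{i+1}$, and the kernel is $\frac{T_{i+1}^{\bs s}}{y_{i+1}}\Span(y_i,\widetilde y_i)$. Along the bosonic family the two-dimensional space $\Span(y_i,\widetilde y_i)$ is fixed (any two generic members have Wronskian proportional to $\frac{T_i^{\bs s}}{T_{i+1}^{\bs s}}y_{i-1}y_{i+1}$ by Lemma \ref{MVpopulations}), so the kernel, and therefore the operator, does not depend on the chosen representative. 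The case $s_i=s_{i+1}=-1$ is identical after passing to inverses.

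\emph{Fermionic step} ($s_i\neq s_{i+1}$). Now the factors have opposite parity, so their product is a $(1|1)$-rational pseudodifferential operator $(\partial-a)(\partial-b)^{-1}$, taking $s_i=1$ for definiteness (the case $s_i=-1$ follows by inverting). With $a=\ln'\frac{T_i^{\bs s}y_{i-1}}{y_i}$ and $b=\ln'\frac{y_{i+1}}{T_{i+1}^{\bs s}y_i}$ one computes $a-b=\ln'(\widehat T_1\widehat T_2)$, where $\widehat T_1=T_i^{\bs s}y_{i-1}$ and $\widehat T_2=T_{i+1}^{\bs s}y_{i+1}^{-1}$. These two factors are exactly the $\gl_{1|1}$ operator $R^{\bs s}$ of Lemma \ref{typlem} with $(T_1,T_2,y)$ replaced by $(\widehat T_1,\widehat T_2,y_i)$; the only novelty is that $\widehat T_2$ is rational rather than polynomial, but the Ore relation \eqref{Ore} holds over any differential field, so this is harmless. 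I would then verify that the $\widetilde y_i$ produced by \eqref{frp} is precisely the one demanded by \eqref{Ore}, reproducing the swapped factorization $\big(\partial-\ln'\frac{\widetilde y_i}{T_i^{\tilde{\bs s}}y_{i-1}}\big)^{-1}\big(\partial-\ln'\frac{T_{i+1}^{\tilde{\bs s}}\widetilde y_i}{y_{i+1}}\big)$ displayed in the Introduction, after using Lemma \ref{change in T} to rewrite $T_i^{\tilde{\bs s}}=T_{i+1}^{\bs s}\pi_i^{\bs s}$ and $T_{i+1}^{\tilde{\bs s}}=T_i^{\bs s}(\pi_i^{\bs s})^{-1}$.

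I expect the fermionic bookkeeping to be the main obstacle: unlike the bosonic case, both the parity sequence and the polynomials $T_i^{\bs s},T_{i+1}^{\bs s}$ change, so one must track the $\pi$-factors carefully. The crux is the multiplicative identity $\pi(\widehat T_1\widehat T_2)=\pi_i^{\bs s}\,y_{i-1}y_{i+1}$ (up to a nonzero constant), which converts the $\gl_{1|1}$ reproduction rule into \eqref{frp}; it holds because $\pi_i^{\bs s}=\pi(T_i^{\bs s}T_{i+1}^{\bs s})$ when $s_i=-s_{i+1}$, and $\pi$ collects the roots of both numerator and denominator. Verifying this identity requires precisely that $y_{i-1},y_{i+1}$ have simple roots, are coprime to each other, and share no roots with $T_i^{\bs s}T_{i+1}^{\bs s}$, so the delicate point is checking that the genericity of $\bs y$ and $\bs y^{[i]}$ supplied by Theorem \ref{GRP} furnishes exactly these hypotheses. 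By contrast the bosonic invariance is essentially the $\gl_2$ statement and is routine once the kernel has been identified.
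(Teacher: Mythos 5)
Your proposal is correct and follows essentially the same route as the paper: reduce to the invariance of the product of the $i$-th and $(i{+}1)$-st factors under a single reproduction step, handle the bosonic case by the $\gl_2$ kernel argument (which the paper simply cites from \cite{MVpopulations}), and handle the fermionic case by combining the Ore relation \eqref{Ore} with \eqref{frp} and Lemma \ref{change in T}, noting that $a-b=\ln'\bigl(T_i^{\bs s}T_{i+1}^{\bs s}y_{i-1}y_{i+1}^{-1}\bigr)$ must be nonzero for the step to be defined. The details you flag as delicate (the rationality of $\widehat T_2$, the bookkeeping of the $\pi$-factors) check out exactly as you describe.
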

\begin{proof}
	We want to show \begin{align*}\label{localfactor}
	\bigg(\partial-s_i\ln'\frac{T^{\bm{s}}_iy_{i-1}}{y_i}\bigg)^{s_i}&\bigg(\partial-s_{i+1}\ln'\frac{T^{\bm{s}}_{i+1}y_{i}}{y_{i+1}}\bigg)^{s_{i+1}}
	=&\bigg(\partial-s_{i+1}\ln'\frac{T^{\bm{s}^{[i]}}_iy_{i-1}}{\wt{y}_i}\bigg)^{s_{i+1}}\bigg(\partial-s_{i}\ln'\frac{T^{\bm{s}^{[i]}}_{i+1}\wt{y}_{i}}{y_{i+1}}\bigg)^{s_{i}}.
	\end{align*} We have four cases, $(s_i,s_{i+1})=(\pm1,\pm1)$. The cases of $s_i=s_{i+1}$ are proved in \cite{MVpopulations}.
	
		Consider the case of $s_i=-s_{i+1}=1$. We want to show
		\begin{align*}
		\bigg(\partial-\ln'\frac{T^{\bs s}_iy_{i-1}}{y_i}\bigg)\bigg(\partial-\ln'\frac{y_{i+1}}{T^{\bs s}_{i+1}y_{i}}\bigg)^{-1}=\bigg(\partial-\ln'\frac{\wt{y}_{i}}{T^{\bs s}_{i+1}\pi^{\bs s}_i y_{i-1}}\bigg)^{-1}\bigg(\partial-\ln'\frac{T^{\bs s}_{i}(\pi^{\bs s}_i)^{-1}\wt{y}_{i}}{y_{i+1}}\bigg).
		\end{align*} 
		This equation is proved by a direct computation using  \eqref{Ore} and \eqref{frp}. We only note that the rational function $T^{\bm{s}}_iT^{\bm{s}}_{i+1}y_{i-1}y_{i+1}^{-1}$ is not constant by the assumption that the reproduction is possible.

		The case of $s_i=-s_{i+1}=-1$ is similar.
\end{proof}
We denote the rational pseudodifferential operator corresponding to a population $P$ by $R_P$. 

\medskip

It is known that the Gaudin Hamiltonians acting in $L(\bs \la)$  can be included in a natural commutative algebra 
$\mc B(\bs \la)$ of higher Gaudin Hamiltonians, see \cite{MR14}. We expect that similar to the even case, the rational pseudodifferential operator $R^{\bm{s}}(\bs y)$ encodes the eigenvalues of the algebra $\mc B(\bs \la)$ acting on the Bethe vector corresponding to $\bs y$. Then, Theorem \ref{Diff of Popu} would assert that the formulas for the eigenvalues of $\mc B(\bs \la)$ do not depend on the choice of $\bs y$ in the population.

Here we show that the eigenvalues \eqref{hamiltonian} of the (quadratic) Gaudin Hamiltonians do not change under the $\gl_{M|N}$ reproduction procedure. Denote the eigenvalues of the Gaudin Hamiltonians given in \eqref{hamiltonian} by $E_k(\bs y)$, $k=1,\dots,n$. Note that $E_k(\bs y)$ is defined only if $y_i(z_k)\neq 0$ whenever $T_i^{\bs s}(T_{i+1}^{\bs s})^{-s_is_{i+1}}$ vanishes at $x=z_k$. We call such sequences $\bs y$  \emph{$k$-admissible}.

\begin{lem}
Let $\bs y=(y_1,\dots,y_{M+N-1})$ be a sequence of polynomials such that there exists a sequence of polynomials $\wt{\bs y}$
satisfying \eqref{brp} if $s_i=s_{i+1}$ or \eqref{frp} if $s_i=-s_{i+1}$.  Suppose that $\bs y$ and $\wt{\bs y}$ are $k$-admissible. Then $E_k(\bs y)=E_k(\wt{\bs y})$.  
\end{lem}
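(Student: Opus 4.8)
The plan is to reduce to the single reproduction step described in the hypotheses: $\wt{\bs y}$ differs from $\bs y$ only in the coordinate $\wt y_i$, and either $s_i=s_{i+1}$ (bosonic, \eqref{brp}) or $s_i=-s_{i+1}$ (fermionic, \eqref{frp}). First I would recast the eigenvalue formula \eqref{hamiltonian} in logarithmic form. Grouping the Bethe roots by colour through \eqref{y fun} and using $\sum_{j:\,c(j)=i}(t_j-z_k)^{-1}=-\ln' y_i(z_k)$ gives
\[
E_k(\bs y)=\sum_{r\neq k}\frac{((\lambda^{(k)})^{\bs s},(\lambda^{(r)})^{\bs s})}{z_k-z_r}-\sum_{i=1}^{M+N-1}\big((\lambda^{(k)})^{\bs s},\alpha^{\bs s}_i\big)\,\ln' y_i(z_k).
\]
Here $k$-admissibility is exactly the statement that $y_i(z_k)\neq 0$ whenever $\big((\lambda^{(k)})^{\bs s},\alpha^{\bs s}_i\big)\neq 0$, so every surviving summand is finite.

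In the bosonic case the parity sequence, the weights $(\lambda^{(k)})^{\bs s}$ and all simple roots $\alpha^{\bs s}_j$ are unchanged, and only $y_i$ is replaced by $\wt y_i$; hence $E_k(\bs y)-E_k(\wt{\bs y})=-\big((\lambda^{(k)})^{\bs s},\alpha^{\bs s}_i\big)\,(\ln' y_i-\ln'\wt y_i)(z_k)$. If the coefficient vanishes there is nothing to prove. Otherwise $T^{\bs s}_i(T^{\bs s}_{i+1})^{-1}$ vanishes at $z_k$, so admissibility of $\bs y$ and $\wt{\bs y}$ forces $y_i(z_k),\wt y_i(z_k)\neq 0$; dividing \eqref{brp} by $y_i\wt y_i$ then shows $(\ln'\wt y_i-\ln' y_i)(z_k)=\Wr(y_i,\wt y_i)/(y_i\wt y_i)\big|_{z_k}=0$, since the numerator $T^{\bs s}_i(T^{\bs s}_{i+1})^{-1}y_{i-1}y_{i+1}$ vanishes at $z_k$ while the denominator does not. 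This is the logarithmic reformulation of the $\gl_2$ observation made after Lemma \ref{MVpopulations}.

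In the fermionic case I would first record the local effect of the swap $\bs s\mapsto\bs s^{[i]}$. Since $\sigma_{\bs s^{[i]}}$ is obtained from $\sigma_{\bs s}$ by interchanging its values at $i$ and $i+1$, one gets $\alpha^{\bs s^{[i]}}_{i-1}=\alpha^{\bs s}_{i-1}+\alpha^{\bs s}_i$, $\alpha^{\bs s^{[i]}}_i=-\alpha^{\bs s}_i$, $\alpha^{\bs s^{[i]}}_{i+1}=\alpha^{\bs s}_i+\alpha^{\bs s}_{i+1}$, all other simple roots being unchanged; and \eqref{si} reads $(\lambda^{(k)})^{\bs s^{[i]}}=(\lambda^{(k)})^{\bs s}-\delta_k\alpha^{\bs s}_i$ with $\delta_k\in\{0,1\}$. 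Combined with $(\alpha^{\bs s}_i,\alpha^{\bs s}_i)=0$ and $\big((\lambda^{(k)})^{\bs s},\alpha^{\bs s}_i\big)=p_k+q_k=:h_k$, these identities confine every change to the colours $i-1,i,i+1$ and to the pairwise $z$-sum, which contributes $\sum_{r\neq k}(\delta_r h_k+\delta_k h_r)/(z_k-z_r)$. Writing the difference of the colour-$i$ terms as $h_k\,\ln'(y_i\wt y_i)(z_k)$ and substituting \eqref{frp}, the regular part at $z_k$ of $h_k\,\ln'\big(\ln'(T^{\bs s}_iT^{\bs s}_{i+1}y_{i-1}/y_{i+1})\big)$ carries precisely $\ln' y_{i-1}(z_k)-\ln' y_{i+1}(z_k)$, which cancels the neighbouring contributions coming from the altered root coefficients at $i-1$ and $i+1$. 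What then remains is exactly the identity proved in Lemma \ref{gl11wf}, now with $T^{\bs s}_iT^{\bs s}_{i+1}=\prod_k(x-z_k)^{h_k}$ in place of $T_1T_2$ and $\pi^{\bs s}_i$ in place of $\pi(T_1T_2)$.

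The hard part will be the fermionic bookkeeping near $x=z_k$. When $h_k\neq 0$ the factor $\pi^{\bs s}_i$ contributes a simple pole of residue $h_k$ to $h_k\ln'\pi^{\bs s}_i$, which must cancel the simple pole of residue $-h_k$ coming from $h_k\,\ln'\big(\ln'(T^{\bs s}_iT^{\bs s}_{i+1}y_{i-1}/y_{i+1})\big)$; this is exactly the $\gl_{1|1}$ cancellation underlying Lemma \ref{gl11wf}. Admissibility is what makes each step legitimate: reading the order of vanishing at $z_k$ off \eqref{frp}, together with the fact that $\ln'(T^{\bs s}_iT^{\bs s}_{i+1}y_{i-1}/y_{i+1})$ has at most a simple pole there, forces $y_{i-1}(z_k),y_{i+1}(z_k),y_i(z_k),\wt y_i(z_k)$ all to be nonzero when $h_k\neq 0$, so that $h_k\,\ln'(y_i\wt y_i)(z_k)$ is finite and the telescoping of the neighbouring logarithmic derivatives is valid; when $h_k=0$ both differences vanish term by term. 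The remaining subcase $(s_i,s_{i+1})=(-1,1)$ is entirely analogous, with the signs interchanged, exactly as in the proof of Theorem \ref{GRP}.
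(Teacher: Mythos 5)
Your proposal is correct and follows essentially the same route as the paper: in the bosonic case one shows $\ln' y_i(z_k)=\ln'\wt y_i(z_k)$ whenever the coefficient $((\lambda^{(k)})^{\bs s},\alpha^{\bs s}_i)$ is nonzero, and in the fermionic case one takes the logarithmic derivative of \eqref{frp} and evaluates at $z_k$, reducing to the cancellation already carried out in the proof of Lemma \ref{gl11wf}. You simply supply more of the bookkeeping (the shifts $\alpha^{\bs s^{[i]}}_{i\pm1}=\alpha^{\bs s}_{i\pm1}+\alpha^{\bs s}_i$, the $\delta_k$-corrections, and the pole cancellation between $h_k\ln'\pi^{\bs s}_i$ and $h_k\ln'\bigl(\ln'(T^{\bs s}_iT^{\bs s}_{i+1}y_{i-1}/y_{i+1})\bigr)$) than the paper's two-line argument makes explicit, and that bookkeeping is consistent.
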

\begin{proof}
	In the case of $s_i=s_{i+1}$, the lemma follows from $\ln'y_i(z_k)=\ln'\wt{y}_i(z_k)$, $k=1,\dots,n$.
	
	In the case of $s_i\neq s_{i+1}$, the lemma follows from taking logarithmic derivative of the equation \eqref{frp} for $y_i$ and evaluating at $x=z_k$, $k=1,\dots,n$, cf. proof of Lemma \ref{gl11wf}. We only note that by \eqref{frp} the polynomial $y_{i-1}y_{i+1}$ does not vanish at $z_k$ 
	if $T_iT_{i+1}$ does and $y_i$,  $\wt{y}_i$ do not.
\end{proof}

\subsection{Example of a population}\label{Pop}
	In what follows, we study the structure of a population. 
	
	Consider $\gl_{2|1}$. We have three parity sequences, $\bm{s}_0=(1,1,-1)$, $\bm{s}_1=(1,-1,1)$, and $\bm{s}_2=(-1,1,1)$. 
		
	Let $\bm{\lambda}=(\lambda^{(1)},\lambda^{(2)},\lambda^{(3)})$, where $\lambda^{(i)}=(1,1,0)$, for $i=1,2,3$. Let $\bm{z}=(1,\omega,\omega^2)$, where $\omega$ is a primitive cubic root of unity. We have $\bm{T}=\bm{T}^{\bm{s}_0}=(x^3-1,x^3-1,1)$. 
		Let $\bm{y}=(y_1,y_2)=(1,1)$. 
		\begin{enumerate}
			\item First, apply the bosonic reproduction procedure in the first direction to $\bm{y}$. We have $\bm{s}_0^{[1]}=\bm{s}_0$, $\bm{T}^{\bs s_0}=\bm{T}$, and $\bm{y}^{[1]}=(y_1^{[1]},y_2^{[1]})=(x-c,1)$, where $c\in\C P^1$. At $c=\infty$, $\bm{y}^{[1]}=(1,1)=\bm{y}$.
			\item Second, apply the fermionic reproduction procedure in the second direction to $\bm{y}^{[1]}$. We have $(\bm{s}_0)^{[2]}=\bm{s}_1$ and  $\bs T^{\bs s_1}=(x^3-1,x^3-1,1)$.  We have $(\bm{y}^{[1]})^{[2]}=(x-c,4x^3-3cx^2-1)$.
			\item Third, apply the fermionic reproduction procedure in the first direction to $(\bm{y}^{[1]})^{[2]}$. We have  $(\bm{s}_1)^{[1]}=\bm{s}_2$ and $\bs T^{\bs s_2}=\left((x^3-1)^{2},1,1\right)$. We have $((\bm{y}^{[1]})^{[2]})^{[1]}=(2x^4+x,4x^3-3cx^2-1)$.
		\end{enumerate}

		It is easy to check that all further reproduction procedures cannot create a new sequence. Therefore the $\gl_{2|1}$-population $P_{(1,1)}$ is the union of three $\C P^1$, $P_{(1,1)}^{\bm{s}_0}=\{(x-c,1)\;|\;c\in\C P^1\}$, $P_{(1,1)}^{\bm{s}_1}=\{(x-c,4x^3-3cx^2-1)\;|\;c\in\C P^1\}$, and $P_{(1,1)}^{\bm{s}_2}=\{(2x^4+x,4x^3-3cx^2-1)\;|\;c\in\C P^1\}$.
		
		We have the following representations for the rational pseudodifferential operator of the population: $R_P=R^{\bs s_0}=R^{\bs s_1}=R^{\bs s_2}$:
		\begin{align*}
			R_P=\bigg(\partial-\frac{3x^2}{x^3-1}\bigg)\bigg(\partial-\frac{3x^2}{x^3-1}\bigg)\partial^{-1}=\bigg(\partial-\frac{3x^2}{x^3-1}\bigg)\bigg(\partial-\frac{2x^3-3cx^2+1}{x^4-cx^3-x+c}\bigg)\partial^{-1}\\
			=\bigg(\partial-\ln'\frac{x^3-1}{x-c}\bigg)\bigg(\partial-\ln'\frac{4x^3-3cx^2-1}{(x^3-1)(x-c)}\bigg)^{-1}\bigg(\partial-\ln'(4x^3-3cx^2-1)\bigg)\\
			=\bigg(\partial-\ln'\frac{2x^4+x}{(x^3-1)^2}\bigg)^{-1}\bigg(\partial-\ln'\frac{2x^4+x}{4x^3-3cx^2-1}\bigg)\bigg(\partial-\ln'(4x^3-3cx^2-1)\bigg).
		\end{align*}


\section{Populations and flag varieties}\label{sec: spaces}
We call a sequence $\bm{\lambda}=(\lambda^{(1)},\dots,\lambda^{(n)})$ of polynomial $\gl_{M|N}$ weights \emph{typical} if at least one of the $\lambda^{(k)}$, $k=1,\dots,n$, is typical.  
In this section, we show that $\gl_{M|N}$ populations associated to typical $\bs \la$ are isomorphic to the variety of the full superflags.

\subsection{Polynomials $\pi_{a,b}$}
Let $\bs M=(m_1\leq m_2\leq \dots \leq m_r)$, $\bs N=(n_1\leq n_2\leq \dots \leq n_r)$, $m_i,n_i\in\Z$, be two generalized partitions with $r$ parts. We say $\bs N$ \emph{dominates} $\bs M$ if $n_i\geq m_i$ for $i=1,\dots, r$. This gives a partial ordering on the set of 
generalized partitions with $r$ parts.

For a generalized partition with $r$ parts $\bs M$, there exists a unique generalized partition $\bar {\bs M}$ with $r$ parts such that:
\begin{enumerate}
\item all parts of $\bar {\bs M}$ are distinct;
\item $\bar {\bs M}$ dominates $ {\bs M}$; and
\item if a generalized partition with $r$ distinct parts $\bs M'$ dominates $\bs M$, then $\bs M'$ dominates $\bar{\bs M}$.
\end{enumerate}
 We call $\bar {\bs M}$ the \emph{dominant of $\bs M$}. 

We identify multisets of integers with generalized partitions (by putting their elements into weakly increasing order).
\begin{exmp}
Let $\bs M=\{-3,-3,-3,-1,0,5,5,6\}$. Then $\bar {\bs M}=\{-3,-2,-1,0,1,5,6,7\}$.

\end{exmp}
This definition is motivated by the following observation.

Let $V$ be a $d$-dimensional space of functions of $x$ meromorphic around $x=z$ for some $z\in\C$. 
Then $m\in\Z$ is an \emph{exponent of $V$ at $z$} if there is a function $f(x)\in V$ such that the order of the zero at $x=z$ is $m$: 
$f(x)=(x-z)^m(c+o(x-z))$, $c\in\C^\times$. 
Then $V$ has $d$ distinct exponents. We denote $e(V,z)$ the set of exponents of $V$ at $z$.

Let $V_1,\dots,V_k$ be spaces of functions of $x$ meromorphic around $x=z$, $\dim V_i=d_i$. Let $\bs M=\sqcup_{i=1}^k e(V_i,z)$.
Let $V=\oplus_{i=1}^kV_i$. Assume that $\dim V=\sum_{i=1}^k d_i$. Then $e(V,z)$ dominates $\bar {\bs M}$. Moreover, generically, $e(V,z)=\bar {\bs M}$.

\medskip

Let $T_1,\dots ,T_{M+N}\in \C(x)$ be rational functions such that $T_i/T_{i+1}\in\C[x]$ is a polynomial for all $i=1,\dots,M+N-1$, $i\neq M$.
Let $\tau_i(z)$ be the order of the zero of $T_i$ at $x=z$. Set 
$$m_{i}(z)=\tau_{M-i+1}(z)+i-1, \ i=1,\dots,M,\qquad n_i(z)=-\tau_{M+i}(z)+i-1,\ i=1,\dots,N.$$ 
We have $m_1(z)<m_2(z)<\dots <m_M(z)$, $n_1(z)<n_2(z)<\dots <n_N(z)$.

Let $a\in\{0,\dots,M\}$, $b\in\{0,\dots,N\}$. Let $\bar M_{a,b}=\{c_1(z)<\dots<c_{a+b}(z)\}$ be the dominant of $\{m_1(z),\dots,m_a(z),n_1(z),\dots,n_b(z)\}$.
Define
$$d_{a,b}(z)=ab-\sum_{i=1}^{a+b}c_i(z)+\sum_{i=1}^a m_{i}(z)+\sum_{i=1}^b n_{i}(z)=\binom{a+b}{2}-\sum_{i=1}^{a+b}c_i(z)+\sum_{i=1}^a \tau_{M+1-i}(z)-\sum_{i=1}^b \tau_{M+i}(z). $$
Note that $d_{a,b}(z)\geq 0$. Moreover, for all but finitely many $z$ we have $m_i=i-1$, $n_i=i-1$, $c_i=i-1$, and $d_{a,b}(z)=0$. 

We set
\be\label{pi}
\pi_{a,b}=\prod_{z\in\C}(x-z)^{d_{a,b}(z)}.
\ee
Note that $\pi_{a,b}\in \C[x]$ is a polynomial. 

Note that for any non-zero rational function $f(x)$, the polynomials $\pi_{a,b}$ computed from $T_i$ and $fT_i$ are the same.

\subsection{Properties of $\pi_{a,b}$}

Let $\bm{\lambda}$ be a sequence of polynomial $\gl_{M|N}$ weights. Let $T_i=T_i^{\bs s_0}$ be the corresponding polynomials, see 
\eqref{T fun}. Let $\pi_{a,b}$ be the polynomials given by \eqref{pi}.

Let $\bs s$ be a parity sequence. Using $\pi_{a,b}$, the polynomials $T_i^{\bs s}$ can be written in terms of the $T_i$.

\begin{thm}\label{change of T}
	We have
	\[T^{\bs s}_i=
	T_{\sigma_{\bs s}(i)}\ \frac{\pi_{\bs s^+_i,\bs s^-_i}}{\pi_{\bs s^+_i+1,\bs s^-_i}},\; {\rm if }\ s_i=1\quad {\rm and } \quad 
	T^{\bs s}_i=T_{\sigma_{\bs s}(i)}\ \frac{\pi_{\bs s^+_i,\bs s^-_i+1}}{\pi_{\bs s^+_i,\bs s^-_i}},\;  {\rm if }\ s_i=-1.
	\]
\end{thm}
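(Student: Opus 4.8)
The plan is to fix a point $z\in\C$ and to prove both asserted formulas as an equality of orders of vanishing $\ord_z$ at $z$; since all factors are monic up to scalar and agree at all but finitely many $z$, matching $\ord_z$ for every $z$ forces the polynomial identity. I would argue by induction on the length $\ell(\bs s)$ of $\sigma_{\bs s}$ (its number of inversions), using that every $\bs s$ is joined to $\bs s_0$ by a chain of elementary moves $\bs s\mapsto \bs s^{[i]}$ with $s_i\neq s_{i+1}$, and that by Lemma \ref{change in T} such a move alters only the $i$-th and $(i+1)$-st entries of $\bs T^{\bs s}$. The key bookkeeping tool at fixed $z$ is the \emph{particle picture} of $d_{a,b}(z)$: viewing integers as slots, the dominant $\bar{\bs M}_{a,b}$ of $\{m_1(z),\dots,m_a(z),n_1(z),\dots,n_b(z)\}$ is the occupied-slot set obtained by inserting these values one at a time, each value $x$ settling into the first free slot $\ge x$. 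Inserting $x$ into an occupied set $C$ produces $C\cup\{g(x,C)\}$ with $g(x,C)=\min\{k\ge x:\ k\notin C\}$, and raises $\sum_i c_i(z)$ by $g(x,C)$; hence $d_{a,b}(z)=ab-(\text{total push})$.

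For the base case $\bs s=\bs s_0$ one has $\sigma_{\bs s_0}=\id$, together with $\bs s^-_i=0$ for $i\le M$ and $\bs s^+_i=0$ for $i>M$, so the two formulas collapse to the statements $\pi_{a,0}=1$ and $\pi_{0,b}=1$. These hold because for polynomial weights the sequences $m_1(z)<\dots<m_M(z)$ and $n_1(z)<\dots<n_N(z)$ are \emph{strictly} increasing, so when only even (respectively only odd) particles are inserted there are no collisions, every push is zero, and $d_{a,0}(z)=d_{0,b}(z)=0$.

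For the inductive step take $\bs s$ with $s_i=1,\ s_{i+1}=-1$ (the case $s_i=-1,\ s_{i+1}=1$ is symmetric), and set $p=\bs s^+_i,\ q=\bs s^-_i$. A direct count gives $\bs s^+_{i+1}=p$, $\bs s^-_{i+1}=q$, $\sigma_{\bs s}(i)=M-p$, $\sigma_{\bs s}(i+1)=M+q+1$, while after the swap $(\bs s^{[i]})^+_i=p+1$, $(\bs s^{[i]})^-_i=q$, $(\bs s^{[i]})^+_{i+1}=p$, $(\bs s^{[i]})^-_{i+1}=q+1$, and $\sigma_{\bs s^{[i]}}$ interchanges the two values. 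Substituting the inductive formulas for $T^{\bs s}_i$ and $T^{\bs s}_{i+1}$ into the two relations $T^{\bs s^{[i]}}_i=T^{\bs s}_{i+1}\pi^{\bs s}_i$ and $T^{\bs s^{[i]}}_{i+1}=T^{\bs s}_i(\pi^{\bs s}_i)^{-1}$ of Lemma \ref{change in T}, \emph{both} relations reduce to the single polynomial identity
\[\pi^{\bs s}_i=\frac{\pi_{p+1,q+1}\,\pi_{p,q}}{\pi_{p+1,q}\,\pi_{p,q+1}},\]
a pleasant consistency check that the same identity governs the two entries.

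The crux is to prove this last identity by comparing $\ord_z$. On the right, $\ord_z$ equals the mixed second difference $\Delta:=d_{p+1,q+1}(z)+d_{p,q}(z)-d_{p+1,q}(z)-d_{p,q+1}(z)$. Writing $C=\bar{\bs M}_{p,q}$ and $G_m=g(m_{p+1}(z),C)$, $G_n=g(n_{q+1}(z),C)$ for the slots into which the next even, respectively odd, particle would settle, the insertion rule yields $\Delta=1-\big(g(m_{p+1}(z),\,C\cup\{G_n\})-G_m\big)$: if $G_m\neq G_n$ the slot $G_m$ stays free and $\Delta=1$, while if $G_m=G_n$ then the even particle is bumped by \emph{exactly} one slot and $\Delta=0$, the bound being one because a dominant occupies $h+1$ only if it occupies $h$, so $G_n\notin C$ forces $G_n+1\notin C$. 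Thus $\ord_z(\text{RHS})=[G_m\neq G_n]$. On the left, $\pi^{\bs s}_i=\pi(T^{\bs s}_iT^{\bs s}_{i+1})$, so $\ord_z\pi^{\bs s}_i=[\ord_z(T^{\bs s}_iT^{\bs s}_{i+1})\neq 0]$; feeding in the inductive formulas and the identities $m_{p+1}(z)=\tau_{M-p}(z)+p$, $n_{q+1}(z)=q-\tau_{M+q+1}(z)$, a short telescoping of the push increments collapses $\ord_z(T^{\bs s}_iT^{\bs s}_{i+1})$ to $G_m-G_n$, so the left-hand order is also $[G_m\neq G_n]$. The main obstacle is precisely this final combinatorial computation: recognizing both the mixed second difference of the defects $d_{a,b}$ and the exponent $\ord_z(T^{\bs s}_iT^{\bs s}_{i+1})$ as the \emph{same} indicator $[G_m\neq G_n]$, which rests on the no-back-filling property of dominants together with the telescoping of pushes. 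Everything else is routine index bookkeeping and the two cited lemmas.
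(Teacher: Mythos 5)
Your proof has the same skeleton as the paper's: induct on adjacent swaps $\bs s\mapsto\bs s^{[i]}$ with $s_i\neq s_{i+1}$, use Lemma \ref{change in T} to reduce everything to the single identity $\pi^{\bs s}_i=\pi_{p+1,q+1}\,\pi_{p,q}/(\pi_{p+1,q}\,\pi_{p,q+1})$ with $p=\bs s^+_i$, $q=\bs s^-_i$ (the paper writes this with $a=p$, $b=q+1$), and verify that identity by matching orders of vanishing at each point $z$. Where you genuinely diverge is in the verification of this last identity: the paper evaluates $d_{a,b}(z_k)$ in closed form via a case analysis on where $b$ sits relative to $\la^{(k)}_M\geq\dots\geq\la^{(k)}_{M-a+1}$ and then computes the two differences explicitly, whereas you run the order-independent insertion (``parking'') model of the dominant and reduce both sides to the indicator $[G_m\neq G_n]$. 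Your route is arguably cleaner and uses less about polynomial weights than the paper's (essentially only the strict increase of the exponents); I checked the telescoping $\ord_z(T^{\bs s}_iT^{\bs s}_{i+1})=G_m-G_n$ and the identity $\Delta=1-\bigl(g(m_{p+1},C\cup\{G_n\})-G_m\bigr)$, and both are correct.

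One step is misjustified. The assertion that ``a dominant occupies $h+1$ only if it occupies $h$'' is false (the paper's own example $\{-3,-2,-1,0,1,5,6,7\}$ occupies $5$ but not $4$), and the bare implication $G_n\notin C\Rightarrow G_n+1\notin C$ also fails in general (e.g.\ $C=\{-2,2\}$ and $G_n=1$). What saves the argument is the hypothesis $G_m=G_n=:G$ in force at that point: every particle building $C$ then has value strictly less than $G$ (since $m_j<m_{p+1}\leq G_m$ for $j\leq p$ and $n_j<n_{q+1}\leq G_n$ for $j\leq q$), and in the insertion process no particle can come to rest at or above a slot that is free at the end, because to land strictly above it the particle would have had to find that slot already occupied, and slots never become unoccupied. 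Hence $C\cap[G,\infty)=\emptyset$, so in particular $G+1\notin C$, the even particle is bumped by exactly one slot, and $\Delta=0$ as you claim. With this one local repair your proof is complete.
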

\begin{proof}
Let $\bs s$ be a parity sequence such that $s_i\neq s_{i+1}$ and $\wt{\bs s}=\bs s^{[i]}=(s_1,\dots,s_{i+1},s_i,\dots, s_{M+N})$.
Let $a=\bs s^+_i$, $b=\bs s^-_i+1$. By Lemma \ref{change in T} it is sufficient to check 
	$$
	\frac{\pi_{a+1,b}\, \pi_{a,b-1}}{\pi_{a,b}\, \pi_{a+1,b-1}}=\pi\bigg(T_{M+b}T_{M-a}\frac{\pi_{a,b}}{\pi_{a+1,b-1}}\bigg).
	$$
	
	Since $\lambda^{(k)}$ is a polynomial $\gl_{M|N}$-weight, the exponent of $\pi_{a,b}$ at $z_k$, $d_{a,b}(z_k)$, is given by
	$$
	d_{a,b}(z_k)=\begin{cases}
	ab &{\rm if}\ b\leq \lambda^{(k)}_M,\\
	(a-1)b+\lambda^{(k)}_{M} &{\rm if}\ \lambda^{(k)}_{M}<b\leq \lambda^{(k)}_{M-1},\\
	\dots & \dots\\
	\lambda^{(k)}_{M}+\dots+\lambda^{(k)}_{M-a+1} &{\rm if}\ \lambda^{(k)}_{M-a+1}<b.
	\end{cases}
	$$ 
	Thus the exponent of $\pi_{a+1,b}/\pi_{a,b}$ at $z_k$ is given by 
	$$
	d_{a+1,b}(z_k)-d_{a,b}(z_k)=\min\{b, \lambda^{(k)}_{M-a}\}.
	$$
	The exponent of $(\pi_{a+1,b}\, \pi_{a,b-1})/(\pi_{a,b}\, \pi_{a+1,b-1})$ at $z_k$ is $1$ if $b\leq \lambda^{(k)}_{M-a}$ and it is $0$ otherwise.
	
	To compute the exponent of $T_{M+b}T_{M-a}{\pi_{a,b}}/{\pi_{a+1,b-1}}$ at $z_k$, introduce two extra parameters $c_1,c_2$: $\la^{(k)}_{M-c_1+1}<b-1=\la^{(k)}_{M-c_1}=\dots=\la^{(k)}_{M-c_2+1}<b\leq\la^{(k)}_{M-c_2}$. We have $$d_{a,b}-d_{a+1,b-1}=\begin{cases}
	1+a-b-c_2	&\;{\rm if}\; a\geq c_2,\\
	-\la_{M-a}	&\;{\rm if}\; a<c_2. 
	\end{cases}$$ Note that $\la^{(k)}_{M-a}<b$ implies $\la^{(k)}_{M+b}=0$. A direct computation gives the proof.
\end{proof}

\medskip

Let $W=V\oplus U$ be a graded space of rational functions of dimension $M+N$, where $V=W_{\bar 0}$, $U=W_{\bar 1}$ and $\dim V=M$, $\dim U=N$.
For $z\in \C$, define $m_1(z)< m_2(z)<\dots<m_M(z)$ and $n_1(z)< n_2(z)< \dots < n_N(z)$ to be the exponents of $V$ and $U$ at $z$ respectively. Define rational functions
$$
T_{i}^V=\prod_{z\in\C} (x-z)^{m_{M-i+1}-M+i}, i=1,\dots,M, \quad {\rm and} \quad T_{M+i}^U=\prod_{z\in\C} (x-z)^{-n_i+i-1}, i=1,\dots,N.
$$
Let $\pi^{V,U}_{a,b}$ be polynomials as in \eqref{pi} computed from $T_i^V,T_{M+i}^U$.
The following lemma is clear.

\begin{lem}\label{polynomial 1} Let $v_1,\dots,v_a\in V$, $u_1,\dots, u_b\in U$. Then 
$$\frac{\Wr(v_1,\dots,v_a,u_1,\dots,u_b)\,\pi_{a,b}^{V,U}T_{M+1}^UT_{M+2}^U\dots T_{M+b}^U}{T_M^VT_{M-1}^V\dots T_{M-a+1}^V}
$$ is a polynomial.
\qed
\end{lem}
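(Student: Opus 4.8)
\section*{Proof proposal}

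The plan is to show that the rational function in the statement, call it $\Phi$, has no poles at any finite point $z\in\C$; since a rational function with no finite poles is a polynomial, this will suffice. If $v_1,\dots,v_a,u_1,\dots,u_b$ are linearly dependent then $\Wr(v_1,\dots,v_a,u_1,\dots,u_b)=0$ and $\Phi=0$ is trivially a polynomial, so I may assume they are linearly independent and set $W'=\Span(v_1,\dots,v_a,u_1,\dots,u_b)$, a space of dimension $a+b$. I would then fix $z\in\C$ and bound $\ord_z\Phi$ from below, working entirely with orders of vanishing at $z$.

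First I would record the local orders of the $T$-factors. From the definitions one gets $\ord_z T^V_{M-j+1}=m_j(z)+1-j$ and $\ord_z T^U_{M+i}=-n_i(z)+i-1$, whence
\[
\ord_z\big(T_M^V\cdots T_{M-a+1}^V\big)=\sum_{j=1}^a m_j(z)-\binom a2,\qquad
\ord_z\big(T_{M+1}^U\cdots T_{M+b}^U\big)=-\sum_{i=1}^b n_i(z)+\binom b2,
\]
while $\ord_z\pi_{a,b}^{V,U}=d_{a,b}(z)$ by \eqref{pi}, with $d_{a,b}(z)=ab-\sum_{i=1}^{a+b}c_i(z)+\sum_{i=1}^a m_i(z)+\sum_{i=1}^b n_i(z)$ and $\{c_1(z)<\dots<c_{a+b}(z)\}=\bar{\bs M}_{a,b}$ the dominant of $\bs M_{a,b}=\{m_1,\dots,m_a\}\sqcup\{n_1,\dots,n_b\}$.

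The key input is the order of the Wronskian. Writing $e(W',z)=\{w_1<\dots<w_{a+b}\}$, the standard Wronskian exponent formula gives $\ord_z\Wr(v_1,\dots,v_a,u_1,\dots,u_b)=\sum_i w_i-\binom{a+b}2$, so I must establish the lower bound $\sum_i w_i\ge\sum_{i=1}^{a+b}c_i(z)$. Here is the chain I would use. Since $\Span(v_1,\dots,v_a)\subseteq V$ realizes $a$ of the exponents $m_1<\dots<m_M$, its exponent set is $\{m_{i_1}<\dots<m_{i_a}\}$ with $i_j\ge j$, hence dominates $\{m_1,\dots,m_a\}$; likewise $\Span(u_1,\dots,u_b)$ dominates $\{n_1,\dots,n_b\}$. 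Thus $e(\Span v_i,z)\sqcup e(\Span u_j,z)$ dominates $\bs M_{a,b}$. Applying the direct-sum exponent observation preceding \eqref{pi} to $\Span(v_i)\oplus\Span(u_j)=W'$ shows $e(W',z)$ dominates the dominant of $e(\Span v_i,z)\sqcup e(\Span u_j,z)$; by monotonicity of the operation $\bs M\mapsto\bar{\bs M}$ under domination (a direct consequence of the defining property (3) of the dominant), that dominant in turn dominates $\bar{\bs M}_{a,b}$. Concatenating, $e(W',z)$ dominates $\bar{\bs M}_{a,b}$, i.e. $w_i\ge c_i(z)$ for every $i$, giving $\sum_i w_i\ge\sum_i c_i(z)$.

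Finally I would assemble the four contributions to $\ord_z\Phi$. The terms $\sum c_i(z)$, $\sum m_j(z)$, and $\sum n_i(z)$ cancel completely, leaving
\[
\ord_z\Phi\ \ge\ -\binom{a+b}2+ab+\binom b2+\binom a2\ =\ 0,
\]
using the identity $\binom{a+b}2=\binom a2+\binom b2+ab$. As $z\in\C$ was arbitrary, $\Phi$ has no finite poles and is therefore a polynomial. I expect the main obstacle to be precisely the domination inequality $\sum_i w_i\ge\sum_i c_i(z)$: this is where the definition of the dominant $\bar{\bs M}$ and the direct-sum exponent estimate must be combined carefully, since the individual exponents $w_i$ of $W'$ can strictly exceed the $c_i(z)$, so only the cumulative dominance bound—rather than a naive termwise minimal-exponent count—is available.
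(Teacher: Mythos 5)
Your proof is correct and follows exactly the route the paper intends: the paper states the lemma as "clear" with no written proof, but the preceding observation about exponents of direct sums dominating the dominant $\bar{\bs M}$ is precisely the key inequality $\sum_i w_i\ge\sum_i c_i(z)$ that you isolate, and the rest is the local order bookkeeping you carry out. Your version is a faithful and complete expansion of that intended argument, including the (true, elementary) fact that domination is preserved under disjoint union and under passing to dominants.
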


\subsection{The $\gl_{M|N}$ spaces}\label{glM|N space}
Let $W=V\oplus U$ be a graded space of rational functions of dimension $M+N$, where $V=W_{\bar 0}$, $U=W_{\bar 1}$ and $\dim V=M$, $\dim U=N$.
For $z\in \C$, let as before $m_1(z)< m_2(z)<\dots<m_M(z)$ and $n_1(z)< n_2(z)< \dots < n_N(z)$ be the exponents of $V$ and $U$ at $z$ respectively.

We call $W$ a \emph{$\gl_{M|N}$ space} if the following conditions are satisfied for all $z\in\C$:
\begin{enumerate}
\item $n_N(z)\leq N-1$;
\item if   $m_1(z)<0$, then $m_2(z)\geq 1$, $n_1(z)=m_1(z)$, and $n_i(z)=i-1$, $i=2,\dots, N$;
\item if  $v\in V$, $u\in U$ are not regular at $z$, then there exists a $c\in\C$ such that $(u+cv)(z)=0$.
\end{enumerate}
 
These conditions can be reformulated as follows. 
Let $$p^V=\prod_{z,\, m_1(z)<0} (x-z)^{-m_1(z)}, \qquad p^U=\prod_{z,\, n_1(z)<0} (x-z)^{-n_1(z)}$$ be the least common denominators. 
Then $\bar V = p^VV$ and $\bar U=p^UU$ are spaces of polynomials.  

\begin{lem}\label{equivalent conditions}
The conditions in the definition of the $\gl_{M|N}$ space are equivalent to:
\begin{enumerate}
\item $p^U/p^V$ is a polynomial that is relatively prime with $p^V$;
\item $T_{M-1}^{\bar V}/p^V$ and 
$T_{M+N}^U$
are polynomials;
\item if $T_{M+i}^{\bar U}(z)=0$ for some $i=2,\dots,N$, then $(p^U/p^V)(z)=0$;
\item for any $v\in V, u\in U$, $p^V\Wr(v,u)$ is regular at every zero of $p^V$. 
\end{enumerate}
\end{lem}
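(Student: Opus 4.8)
The plan is to verify the stated equivalence \emph{pointwise} in $z$. Every object entering the two lists of conditions---the spaces $V,U$, the denominators $p^V,p^U$, the Wronskians $\Wr(v,u)$, and the rational functions $T^{\bar V}_i$, $T^U_{M+i}$, $T^{\bar U}_{M+i}$---is determined locally at a point $z\in\C$ by the two exponent strings $m_1(z)<\dots<m_M(z)$ and $n_1(z)<\dots<n_N(z)$. I would therefore fix $z$, write $m_i=m_i(z)$, $n_i=n_i(z)$, and rephrase each condition as a numerical condition on these integers; the lemma then reduces to comparing the two resulting lists. It is cleanest to split the argument in two: first to show that conditions (1)--(3) of the lemma are jointly equivalent to conditions (1)--(2) of the definition (pure exponent bookkeeping, no Wronskians), and then, under this common hypothesis, to show that condition (4) of the lemma is equivalent to condition (3) of the definition.

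For the bookkeeping part I would record $\ord_z p^V=\max(0,-m_1)$ and $\ord_z p^U=\max(0,-n_1)$, together with
\[
\ord_z\!\big(T^{\bar V}_{M-1}/p^V\big)=m_2-1,\qquad \ord_z T^U_{M+N}=-n_N+N-1 .
\]
These identities turn condition (1) of the lemma into ``$m_1<0\Rightarrow n_1=m_1$'' (polynomiality of $p^U/p^V$ forces $\max(0,-n_1)\ge\max(0,-m_1)$, and coprimality with $p^V$ forces equality at the zeros of $p^V$), and condition (2) into ``$n_N\le N-1$ for all $z$, and $m_1<0\Rightarrow m_2\ge1$''. Condition (3), read off the orders of the odd $T$-functions, becomes the lower bound $n_i\ge i-1$ for $2\le i\le N$ at every $z$ where $p^U$ and $p^V$ have equal order (equivalently $(p^U/p^V)(z)\ne0$). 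Now if $m_1<0$, then condition (1) gives $n_1=m_1$, so $p^U$ and $p^V$ have equal order at $z$ and condition (3) yields $n_i\ge i-1$ for $i\ge2$; together with $n_N\le N-1$ this forces $n_i=i-1$ for all $i\ge2$, while these bounds are automatic when $m_1\ge0$. Hence conditions (1)--(3) of the lemma hold at $z$ exactly when $n_N\le N-1$ and, whenever $m_1<0$, one has $m_2\ge1$, $n_1=m_1$ and $n_i=i-1$ ($i\ge2$)---that is, exactly when conditions (1)--(2) of the definition hold at $z$. In particular this common condition makes $m_1$ the unique negative exponent of $V$ and $n_1=m_1$ the unique negative exponent of $U$.

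It remains---and this is the main obstacle---to prove that, under the common condition, condition (4) of the lemma is equivalent to condition (3) of the definition. Both are vacuous away from the zeros of $p^V$, so fix $z$ with $m_1<0$. A short order count (using $m_2\ge1$ and $n_1=m_1$) shows that $p^V\Wr(v,u)$ is automatically regular at $z$ unless both $v$ and $u$ are singular there, in which case the unique-negative-exponent structure forces each to have order exactly $m_1$; only such pairs need examination. For these I would factor out the pole, writing $v=(x-z)^{m_1}\tilde v$ and $u=(x-z)^{m_1}\tilde u$ with $\tilde v,\tilde u$ regular and nonzero at $z$, and use
\[
\Wr(v,u)=(x-z)^{2m_1}\Wr(\tilde v,\tilde u),\qquad \Wr(\tilde v,\tilde u)=\tilde v^{\,2}\,(\tilde u/\tilde v)'.
\]
Since $\tilde v(z)\ne0$ and $\ord_z p^V=-m_1$, this gives $\ord_z\!\big(p^V\Wr(v,u)\big)=m_1+\ord_z(\tilde u/\tilde v)'$, so $p^V\Wr(v,u)$ is regular at $z$ if and only if $(\tilde u/\tilde v)'$ vanishes to order at least $-m_1$; equivalently there is a constant $c$ with $\ord_z(u+cv)\ge1$, i.e. $(u+cv)(z)=0$. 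Quantified over all singular $v\in V$, $u\in U$, the last statement is precisely condition (3) of the definition, and since both conditions range universally over pairs this establishes the per-$z$ equivalence. Combining the two parts gives the full biconditional between the definition's conditions (1)--(3) and the lemma's conditions (1)--(4). The only delicate points are the order bookkeeping above and the verification that no singular-times-regular or regular-times-regular pair violates condition (4), both of which are direct.
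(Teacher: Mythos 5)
Your overall strategy is the same as the paper's: reduce everything to order-of-vanishing bookkeeping at each $z$, match conditions (1)--(3) of the lemma with conditions (1)--(2) of the definition, and then match condition (4) with the Wronskian condition (3) of the definition under that common hypothesis. Your treatment of the Wronskian half is correct and considerably more detailed than the paper's one-sentence assertion; the identity $\Wr(v,u)=(x-z)^{2m_1}\tilde v^{\,2}(\tilde u/\tilde v)'$ and the resulting equivalence with the existence of $c$ such that $(u+cv)(z)=0$ is exactly the right computation. One small point there: to dismiss the mixed pairs (one of $v,u$ singular at $z$, the other not) you need $n_i\geq 1$ for $i\geq 2$ at each zero of $p^V$, not merely that $m_1$ is the unique negative exponent of each space --- if $U$ contained an element $u$ of order $0$ at such a $z$ and $\ord_z v=m_1$, then $\ord_z\Wr(v,u)=m_1-1$ and $p^V\Wr(v,u)$ would have a pole. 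This is supplied by the common hypothesis ($n_i=i-1$ for $i\geq 2$), but not by the data you cite in your parenthesis.

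The genuine gap is your translation of condition (3) of the lemma, which you assert without computing $\ord_z T^{\bar U}_{M+i}$. These functions are built from the exponents of $\bar U=p^UU$, which at $z$ are $n_i+\ord_z p^U$; hence $\ord_z T^{\bar U}_{M+i}=-(n_i+\ord_z p^U)+i-1$, so ``$T^{\bar U}_{M+i}(z)=0$'' means $n_i<i-1-\ord_z p^U$, not $n_i<i-1$. At a zero of $p^V$, where condition (1) forces $\ord_z p^U=\ord_z p^V=-m_1>0$, condition (3) therefore only yields $n_i\geq i-1+m_1$, and combined with the upper bound $n_i\leq i-1$ (which follows from $n_N\leq N-1$ and the strict increase of the $n_i$) this does not pin down $n_i=i-1$. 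Concretely, for $M=N=2$ take $V$ with exponents $(-1,1)$ and $U$ with exponents $(-1,0)$ at some $z$: conditions (1)--(3) of the lemma hold there, but condition (2) of the definition fails. (In fact, since $\bar U$ is a space of polynomials its exponents satisfy $n_i+\ord_z p^U\geq i-1$ everywhere, so on a literal reading $T^{\bar U}_{M+i}$ never vanishes and condition (3) is vacuous.) The paper's own proof asserts the same intermediate equivalence in one line and inherits the same issue; the intended reading is evidently ``$T^{U}_{M+i}(z)=0$'', under which your translation and hence your argument are correct. Alternatively, the lemma as literally stated can be rescued by noting that condition (4), applied to $v$ of order $m_1$ and $u$ of order $n_i$, independently forces $n_i\geq 1$ for $i\geq 2$ at zeros of $p^V$ (this is exactly how the example above is excluded: there $p^V\Wr(v,u)$ has a pole). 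Either way, the step ``conditions (1)--(3) of the lemma imply condition (2) of the definition'' needs repair, and you should say which reading of $T^{\bar U}_{M+i}$ you are using.
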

\begin{proof}
Let $\tau^V_i(z)$, $\tau^{\bar{V}}_i(z)$, $\tau^U_j(z)$, and $\tau^{\bar{U}}_j(z)$ be the orders of the zeroes of $T^V_i$, $T^{\bar V}_i$, $T^U_j$, and $T^{\bar{U}}_j$ at $z$. If $\tau^V_{M}(z)<0$, then $\tau^{\bar{V}}_i(z)=\tau^V_i(z)-\tau^V_{M}(z)$. If $\tau^U_{M+1}(z)>0$, then $\tau^{\bar{U}}_j(z)=\tau^U_j(z)-\tau^U_{M+1}(z)$.
	
The conditions (1) and (2) in the definition of a $\gl_{M|N}$ space are equivalent to $\tau^U_{M+N}(z)\geq 0$ and if $\tau^V_M(z)<0$, then $\tau^V_{M-1}(z)\geq 0$, $-\tau^U_{M+1}(z)=\tau^V_{M}(z)$, and $\tau^U_{M+2}(z)=\dots=\tau^U_{M+N}(z)=0$. This is equivalent to the first three conditions in the lemma.

The condition (3) in the definition is equivalent to the condition {\it (4)} in the lemma in the presence of the other conditions.
\end{proof}

Let $W=V\oplus U$ be a $\gl_{M|N}$ space. Define polynomials 
\begin{align*}
&T^W_i=T^V_i=\frac{T_i^{\bar V}}{p^V}, \ i=1,\dots, M-1, \qquad T_M^W=p^VT_M^{V}=T_M^{\bar V}, \\
&T_{M+1}^W=\frac{T_{M+1}^U}{p^V}=\frac{p^U}{p^V},\qquad 
T^W_{M+i}={T_{M+i}^U}={p^U}{T_{M+i}^{\bar U}}, \ i=2,\dots, N.
\end{align*}

\begin{rem}
Note that while $T_i^{\bar V}$, $i=1,\dots, M$, are the standard polynomials describing the exponents of the space of polynomials $p^VV$,
our definition of $T_{M+i}^{\bar U}$ has an extra minus sign. The exponents of the space of polynomials $p^UU$ are described by a sequence of polynomials
$(p^U/T_{M+N}^W,\dots, p^U/T_{M+2}^W, 1)$.
\end{rem}

Let $\pi^W_{a,b}$ be as in \eqref{pi} computed from polynomials $T_i^W$.

Further, given $a\in\{0,1,\dots,M\}$, $b\in\{0,1,\dots,N\}$, $v_1,\dots, v_a\in V$, $u_1,\dots, u_b\in U$,
define
\[
y_{a,b}=\frac{\Wr(v_1,\dots,v_{a},u_1,\dots,u_b)\, \pi_{a,b}^W\, p^V\, T^W_{M+1}\dots T^W_{M+b} }{T^W_{M}\dots T^W_{M-a+1}}.
	\]
We have 
\begin{lem}\label{polynomial 2}
	The function $y_{a,b}$ is a polynomial. 
\end{lem}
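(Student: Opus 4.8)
Since $y_{a,b}$ is manifestly a rational function, it suffices to prove that it has no pole at any finite $z\in\C$: a rational function regular on all of $\C$ is a polynomial. If $v_1,\dots,v_a,u_1,\dots,u_b$ are linearly dependent then the Wronskian vanishes and $y_{a,b}=0$, so I may assume they are independent and set $F=\Span(v_1,\dots,v_a,u_1,\dots,u_b)\subseteq W$, of dimension $a+b$. Using the standard identity $\ord_z\Wr(v_1,\dots,v_a,u_1,\dots,u_b)=\sum_{e\in e(F,z)}e-\binom{a+b}{2}$, the whole problem reduces to bounding $\ord_z y_{a,b}$ from below at each $z$.

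The first step is to compare $y_{a,b}$ with the polynomial $P$ furnished by Lemma \ref{polynomial 1}. By construction the $T_i^W$ agree with the $T_i^V$ and $T_{M+i}^U$ except at the two middle indices, where $T_M^W=p^V T_M^V$ and $T_{M+1}^W=T_{M+1}^U/p^V$. Consequently, at every $z$ with $m_1(z)\ge0$ — that is, away from the zeros of $p^V=\prod_{m_1(z)<0}(x-z)^{-m_1(z)}$ — the factor $p^V$ is a unit, the exponents $m_i^W(z),n_i^W(z)$ defining $\pi_{a,b}^W$ coincide with $m_i(z),n_i(z)$, and a term-by-term comparison gives $\ord_z y_{a,b}=\ord_z P\ge0$. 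In particular the poles of the $u_j$ at the zeros of $p^U/p^V$, which is coprime to $p^V$ by Lemma \ref{equivalent conditions}(1), are already absorbed by $P$.

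It therefore remains to treat a zero $z$ of $p^V$, where $m_1(z)=-\delta$ with $\delta>0$. Here condition (2) in the definition of a $\gl_{M|N}$ space pins down the local exponents: $V$ has exponents $(-\delta,m_2,\dots,m_M)$ with $m_i\ge1$ for $i\ge2$, and $U$ has exponents $(-\delta,1,2,\dots,N-1)$. Computing the orders of the $T_j^W$ at $z$ then yields the pleasant values $\tau_M^W(z)=\tau_{M+1}^W(z)=0$ and $\tau_{M+i}^W(z)=0$ for $i\ge2$, whence
\[
\ord_z y_{a,b}=\Big(\sum_{e\in e(F,z)}e-\binom{a+b}{2}\Big)+d_{a,b}^W(z)+\delta-\sum_{i=2}^a(m_i-i+1).
\]
The crucial input is a lower bound on $\sum_{e\in e(F,z)}e$, and this is exactly where condition (3), equivalently Lemma \ref{equivalent conditions}(4), enters: it forces the order-$(-\delta)$ pole directions of $V$ and of $U$ to be proportional, so that for any $v\in V$, $u\in U$ of order $-\delta$ one has $\ord_z\Wr(v,u)\ge-\delta$ and a suitable combination $u-cv$ is regular at $z$. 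Hence $F$ contains at most one exponent equal to $-\delta$; the two potential poles collapse into one, which is precisely the cancellation encoded combinatorially in $d_{a,b}^W(z)$ (built from the dominant of $\{m_i^W\}\cup\{n_i^W\}$, in which the poles have been normalized to $0$).

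The proof then closes by verifying the numerical inequality $\sum_{e\in e(F,z)}e\ge\binom{a+b}{2}-d_{a,b}^W(z)-\delta+\sum_{i=2}^a(m_i-i+1)$ in the extremal configuration, where the exponents of $F$ are as small as the collapsed-pole constraint allows. I expect this to follow from a greedy description of $e(F,z)$: the alignment caps the number of negative exponents of $F$ at one, and the definition of $\pi_{a,b}^W$ through the dominant of the non-negative exponents is calibrated to exactly this extremal $F$. The case $a=b=1$ already displays the mechanism and its sharpness: there $d_{1,1}^W(z)=0$, the alignment gives $\sum_{e\in e(F,z)}e\ge1-\delta$, and the displayed formula yields $\ord_z y_{1,1}\ge0$, with equality when the collapsed pole meets $z$ to first order. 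The delicate part — reconciling the genuine exponents $e(F,z)$ of the subspace $F$ with the combinatorially defined $d_{a,b}^W(z)$ in the general extremal case — is where I anticipate the bulk of the bookkeeping to lie, and it is the one real obstacle; everything else is the routine local computation sketched above.
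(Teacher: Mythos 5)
Your overall strategy is exactly the paper's (the paper's own proof is the single sentence ``consider orders of zeroes at each $z$''), and your local bookkeeping is correct as far as it goes: the comparison with Lemma \ref{polynomial 1} away from the zeros of $p^V$, the computation $\tau^W_M(z)=\tau^W_{M+1}(z)=\tau^W_{M+i}(z)=0$ at a zero $z$ of $p^V$ using condition (2), and the displayed formula for $\ord_z y_{a,b}$ all check out. But the proof is not finished: the inequality $\sum_{e\in e(F,z)}e\geq\binom{a+b}{2}-d^W_{a,b}(z)-\delta+\sum_{i=2}^a(m_i-i+1)$ for general $(a,b)$ is exactly the content of the lemma at a zero of $p^V$, and you explicitly defer it (``I expect this to follow\dots the one real obstacle''). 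Two points in your sketch of that step need repair. First, ``$F$ contains at most one exponent equal to $-\delta$'' is vacuous (exponents are distinct by definition); the statement you actually need is that $F$ has \emph{at most one non-positive exponent, and all remaining exponents are $\geq 1$}. Second, condition (3) gives $(u+cv)(z)=0$, i.e.\ order $\geq 1$, not merely regularity; if $u+cv$ were only regular, $e(F,z)$ could contain $0$ and already the case $a=b=1$ would fail ($\ord_z y_{1,1}=-1$). Your $a=b=1$ computation silently uses the stronger fact.

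The gap does close, along the following lines. Condition (3), applied to the one-dimensional images of the ``polar part plus constant term'' maps on $V$ and $U$, shows that $F_{\geq 1}=\{f\in F:\ord_z f\geq 1\}$ has codimension at most one in $F$, so $e(F,z)=\{-\delta\}\sqcup e(F_{\geq 1},z)$ (or $e(F,z)=e(F_{\geq1},z)$ if no element of $F$ is singular), with $e(F_{\geq1},z)$ dominating $\overline{\{m_2,\dots,m_a\}\sqcup\{1,\dots,b-1\}\sqcup\{1\}}$ in the worst case where both $\Span(v_i)$ and $\Span(u_j)$ contain singular elements. After simplification your target inequality reads $\sum_{e\in e(F,z)}e\geq\sum_i c^W_i-\delta$ with $c^W=\overline{\{0,0\}\sqcup S}$, $S=\{m_2,\dots,m_a\}\sqcup\{1,\dots,b-1\}$, and the key combinatorial identity is $\overline{\{0,0\}\sqcup S}=\{0\}\sqcup\overline{\{1\}\sqcup S}$ for any multiset $S$ with all entries $\geq1$; this gives equality in the extremal case, and the remaining cases (only one of $F_V$, $F_U$ singular, or neither) follow from monotonicity of the dominant. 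Without that verification, and with the regular-versus-vanishing slip, the proposal is an accurate reduction rather than a proof.
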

\begin{proof}
The lemma is proved by considering orders of zeroes at each $z\in\C$. 
\end{proof}
Note that Lemma \ref{polynomial 2} is stronger than Lemma \ref{polynomial 1}, since $y_{a,b}$ has $p^V$ and not $(p^V)^2$ in the numerator. 
Lemma \ref{polynomial 2} holds due to the additional assumption that $W$ is a  $\gl_{M|N}$ space. Here, we crucially use the condition
(3) in the definition of the $\gl_{M|N}$ space.

\medskip

Let $\bs \la=(\la^{(1)},\dots,\la^{(n)})$ be a sequence of polynomial $\gl_{M|N}$ weights, $\bs z=(z_1,\dots,z_n)$ a sequence of pairwise distinct complex numbers. Let $\bs T=(T_1,\dots, T_{M+N})$ be the corresponding polynomials given by \eqref{T fun}. Let $\bs y$ represent a solution of the BAE associated to $\bs \la,\bs z$, and the standard parity sequence $\bs s_0$. We have the rational pseudodifferential operator $R(\bs y)=D_{\bar 0}(\bs y)D_{\bar 1}^{-1}(\bs y)$. Let $V(\bs y)=\ker D_{\bar 0}(\bs y)$, $U(\bs y)=\ker D_{\bar 1}(\bs y)$.
\begin{prop}\label{polynomial glM|N space}
 If $\bs \la$ is typical, then $W(\bs y)=V(\bs y)\oplus U(\bs y)$ is a $\gl_{M|N}$ space of rational functions and $T_i^W=T_i$, $i=1,\dots, M+N$.
\end{prop}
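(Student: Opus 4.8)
The plan is to determine the exponents of $V=V(\bs y)$ and $U=U(\bs y)$ at every $z\in\C$, and then to read off both the equalities $T^W_i=T_i$ and the three defining conditions of a $\gl_{M|N}$ space directly from these exponents. Writing out \eqref{RPDO of Pop} for $\bs s_0$, we have $D_{\bar 0}=\prod_{i=1}^{M}\bigl(\partial-\ln'\tfrac{T_iy_{i-1}}{y_i}\bigr)$, while $D_{\bar 1}$ is the reversed product of the factors $\partial-\ln'\tfrac{y_i}{T_iy_{i-1}}$, $i=M+1,\dots,M+N$. By the $\gl_M$ and $\gl_N$ reproduction theory (Lemma \ref{MVpopulations} and \cite{MVpopulations}) together with the genericity of $\bs y$, the kernels $V$ and $U$ are spaces of rational functions of dimensions $M$ and $N$. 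First I would compute their exponents from the indicial equations of these two factorizations, where the candidate exponent contributed by a factor is $\ord_z$ of its symbol plus the number of factors to its right. Genericity condition (1) forces $y_i$ to have simple roots for $i\neq M$, so those are apparent singularities; hence the only genuine singular points are the $z_k$, where (with $\tau_j(z)=\ord_zT_j$) one gets $e(V,z)=\{\tau_{M-i+1}(z)+i-1\}_{i=1}^{M}$ and $e(U,z)=\{-\tau_{M+i}(z)+i-1\}_{i=1}^{N}$, and the roots of $y_M$. At a root $z_0$ of $y_M$ of order $r$ the pole enters only through the last factor of each operator, giving $e(V,z_0)=\{-r,1,\dots,M-1\}$ and $e(U,z_0)=\{-r,1,\dots,N-1\}$; note in particular that the exponent $0$ is absent from both.

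From these exponents the identity $T^W_i=T_i$ is pure bookkeeping. They give $T^V_i=T_i$ for $i<M$ and $T^V_M=T_M/y_M$, whence $p^V=y_M$ and $T^W_M=p^VT^V_M=T_M$; likewise $T^U_{M+i}=T_{M+i}$ for $i\geq 2$ and $T^U_{M+1}=T_{M+1}y_M$, whence $p^U=T_{M+1}y_M$ and $T^W_{M+1}=p^U/p^V=T_{M+1}$. Here genericity condition (3) at $i=M$ guarantees that $y_M$ is coprime to $T_MT_{M+1}$, so these ratios are genuine polynomials.

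It remains to check the three conditions defining a $\gl_{M|N}$ space. Conditions (1) and (2) are immediate from the exponent lists: $n_N(z)\le N-1$ everywhere, and at a root of $y_M$ one has $m_1=n_1=-r<0$, $m_2\geq 1$, and $n_i=i-1$ for $i\geq2$. The direct-sum statement $V\cap U=0$ is where typicality enters: choosing $k$ with $\lambda^{(k)}$ typical gives $\lambda^{(k)}_M\geq N$, so every exponent of $V$ at $z_k$ is $\geq N$ while every exponent of $U$ at $z_k$ is $\leq N-1$, and a common nonzero element would have $\ord_{z_k}$ lying in both sets. For condition (3) I would pass to the equivalent form Lemma \ref{equivalent conditions}(4): that $p^V\Wr(v,u)=y_M\Wr(v,u)$ be regular at every root $z_0$ of $y_M$, for all $v\in V$, $u\in U$.

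This last point is the main obstacle, and I expect to settle it as follows. Since the exponent $0$ is absent from $e(V,z_0)$ and $e(U,z_0)$, every element of $V$ or $U$ regular at $z_0$ in fact vanishes there; writing $v=\alpha v_*+v_0$, $u=\beta u_*+u_0$ with $v_0,u_0$ regular and $v_*=T_My_{M-1}/y_M\in V$, $u_*=y_{M+1}/(T_{M+1}y_M)\in U$ the kernels of the last factors of $D_{\bar 0}$ and $D_{\bar 1}$, all terms of $\Wr(v,u)$ except $\alpha\beta\Wr(v_*,u_*)$ have a pole of order at most $r$ at $z_0$. The delicate term is $\Wr(v_*,u_*)$: setting $A=T_My_{M-1}$ and $B=y_{M+1}/T_{M+1}$ one has $\Wr(v_*,u_*)=\Wr(A,B)/y_M^2$ and $\Wr(A,B)=AB\,\ln'(B/A)$, and substituting the fermionic reproduction relation \eqref{frp} at $i=M$, namely $y_M\widetilde y_M=\ln'\!\bigl(\tfrac{T_MT_{M+1}y_{M-1}}{y_{M+1}}\bigr)\pi^{\bs s_0}_My_{M-1}y_{M+1}$, gives $\Wr(A,B)=-\,T_M\widetilde y_M\,y_M/(T_{M+1}\pi^{\bs s_0}_M)$. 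As $T_{M+1}$ and $\pi^{\bs s_0}_M$ do not vanish at $z_0$, the explicit factor $y_M$ shows $\Wr(A,B)$ vanishes to order at least $r$, so $y_M\Wr(v_*,u_*)=\Wr(A,B)/y_M$ is regular. This verifies condition (3) and completes the identification of $W(\bs y)$ as a $\gl_{M|N}$ space with $T^W_i=T_i$.
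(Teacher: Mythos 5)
Your proposal is correct, and its overall architecture matches the paper's: identify $\bar V=y_MV$ and $\bar U=y_MT_{M+1}U$ with the fundamental spaces of the $\gl_M$ and $\gl_N$ populations via \cite{MVpopulations}, read off $p^V=y_M$, $p^U=T_{M+1}y_M$, deduce $V\cap U=0$ from typicality by comparing exponents at a typical point $z_k$, and reduce the remaining work to condition \emph{(4)} of Lemma \ref{equivalent conditions}. (Your explicit exponent bookkeeping for $T^W_i=T_i$ is more detailed than the paper's, which leaves it implicit; just note that a root of $y_M$ may coincide with some $z_k$ at which $T_MT_{M+1}$ does not vanish, so your two exponent formulas should really be merged into one at such points --- the conclusion is unaffected.) Where you genuinely diverge is the proof of the Wronskian regularity condition. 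The paper argues that \emph{generic} pairs $v\in V$, $u\in U$ are reachable from $v(\bs y),u(\bs y)$ by bosonic reproduction, invokes Theorem \ref{GRP}(2) to get that the fermionic relation $y_M\wt y_M=\Wr(v,u)y_M^2\pi_MT_{M+1}/T_M$ has polynomial output for such pairs, and then uses coprimality of $y_M$ with $\pi_MT_{M+1}/T_M$ plus density to conclude for all $v,u$. You instead treat an arbitrary pair directly: decompose $v=\alpha v_*+v_0$, $u=\beta u_*+u_0$ using the absence of the exponent $0$ at a root of $y_M$ (so that $v_0,u_0$ vanish there and only $\Wr(v_*,u_*)$ is dangerous), and then compute $y_M\Wr(v_*,u_*)=-T_M\wt y_M/(T_{M+1}\pi_M)$ exactly from the fermionic relation \eqref{frp}, which only requires Theorem \ref{GRP}(1), i.e.\ the BAE itself. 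Your route is more self-contained --- it avoids the ``generic, hence all'' closure argument and the appeal to iterated bosonic reproduction --- at the cost of the explicit order-of-vanishing analysis; both are valid, and the identity $y_M\Wr(v_*,u_*)=-T_M\wt y_M/(T_{M+1}\pi_M)$ you isolate is exactly the mechanism that makes the paper's generic computation work.
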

\begin{proof}
Denote $W(\bs y)$, $V(\bs y)$, and $U(\bs y)$ by $W$, $V$, and $U$ respectively.

Note that $y_1,\dots, y_{M-1}$ represents a solution of the $\gl_{M}$ BAE. Therefore, the bosonic reproduction procedures generate a $\gl_{M}$ population and 
$y_M\cdot D_{\bar 0}\cdot(y_M)^{-1}$ is the differential operator associated to this population. It follows by \cite{MVpopulations} that $\bar{V}=y_M V$ is a space of polynomials. Similarly, $y_{M+1},\dots, y_{M+N-1}$ represents a solution of the $\gl_{N}$ BAE and
$\bar{U}=y_M T_{M+1}U$ is also a space of polynomials.
 
We have $p^V=y_M$, $p^U=T_{M+1}y_M$. 

Since $\bs \la$ is typical, there exists $k$ such that $\la^{(k)}$ is typical, i.e. $\lambda^{(k)}_M \geq N$. Then $\la^{(k)}_i+M-i \geq \la^{(k)}_i \geq \lambda^{(k)}_M \geq N >  j-1 \geq - \lambda^{(k)}_{M-j}+j-1$ for $i=1,\dots,M$, $j=1,\dots,N$. Therefore the spaces $V$ and $U$ have no exponents in common and hence $V\cap U=0$. 

The only non-trivial condition in Lemma \ref{equivalent conditions} is {\it (4)}. The fermionic reproduction procedure in the $M$-th direction $\eqref{frp}$ can be written as 
$$
y_M\,\wt y_M=\Wr(v,u)y^2_M\pi_MT_{M+1}/T_M.
$$
Initially, we have $v(\bs y)=T_My_{M-1}/y_M$, $u(\bs y)=y_{M+1}/(T_{M+1}y_M)$. Generic $u,v$ can be obtained from $v(\bs y), u(\bs y)$ by the bosonic reproduction procedures. Therefore, by Theorem \ref{GRP}, $\wt y_M$ is a polynomial for generic $v$, $u$. Since $y_M$ is relatively prime to $\pi_MT_{M+1}/T_M$, we obtain condition {\it (4)} in Lemma \ref{equivalent conditions}.
\end{proof}

\begin{rem} \label{nontyp rem1} If $\bs \lambda$ is not typical then cancellations may occur in the rational pseudodifferential operator $R(\bs y)=D_{\bar 0}(\bs y)D_{\bar 1}^{-1}(\bs y)$ of \eqref{RPDO of Pop} and the spaces $V(\bs y)=\ker D_{\bar 0}(\bs y)$, $U(\bs y)=\ker D_{\bar 1}(\bs y)$ may intersect non-trivially. Compare Lemma \ref{typlem}. As an important example, consider the tensor product of $n$ copies of the defining representation, $L(\bs \lambda) = (\C^{M|N})^n$. Then $T_1(x) = \prod_{k=1}^n (x-z_k)$ and $T_{i}(x) = 1$ for $i=2,\dots,N+M$. Thus for the vacuum solution to the BAE, i.e. $\bs y = (1,\dots,1)$, we have
\be D_{\bar 0}(\bs y) = \left( \partial - \prod_{k=1}^n\frac{1}{x-z_k}\right) \partial^{M-1}, \quad
    D_{\bar 1}(\bs y) = \partial^{N} .\nn\ee
\end{rem}

\subsection{The generating map}
Given a parity sequence $\bs s$ and a full superflag $\mc F\in\mc F^{\bs s}(W)$, we define polynomials $y_i(\mc F)$, $i=1,\dots, M+N-1$, by the formula 
$$
y_i(\mc F)=\begin{cases} y_{\bs s_i^+,\bs s_i^-} \ &{\rm if} \ s_i=1, \\  y_{\bs s_i^+,\bs s_i^-+1} \ &{\rm if} \ s_i=-1.\end{cases}
$$
That defines the \emph{generating map} \[\beta^{\bs s}:\mc F^{\bs s}(W)\to (\mathbb{P}(\C[x]))^{M+N-1}, \quad \mc F\mapsto \bs y(\mc F)=(y_1(\mc F),\dots,y_{M+N-1}(\mc F)).\]

\medskip

Let $\bs \la=(\la_1,\dots,\la_n)$ be a typical sequence of polynomial $\gl_{M|N}$ weights, $\bs z=(z_1,\dots,z_n)$ a sequence of pairwise distinct complex numbers. Let $\bs T=(T_1,\dots, T_{M+N})$ be the corresponding polynomials given by \eqref{T fun}. Let $\bs y$ represent a solution of the BAE associated to $\bs \la,\bs z$ and the standard parity sequence $\bs s_0$. 

Recall that we have the $\gl_{M|N}$ population $P=P_{\bs y}$, see Section \ref{6.1}, the rational pseudodifferential operator of the population $R_P=R(\bs y)=D_{\bar{0}}(\bs y)(D_{\bar{1}}(\bs y))^{-1}$, see \eqref{RPDO of Pop} and  the $\gl_{M|N}$ space $W_P=V(\bs y)\oplus U(\bs y)$, see Proposition \ref{polynomial glM|N space}.

The following theorem asserts that the population $P$ is canonically identified with full superflags $\mc F(W_P)$ and the complete factorizations of the pseudodifferential operator $\mc F(R_P)$.
\begin{thm}\label{operator to population}
For any flag $\mc F\in\mc F^{\bs s}(W_P)$, we have $\beta^{\bs s}(\mc F)\in P^{\bs s}$. Moreover, the generating map 
$\beta^{\bs s}: \mc F^{\bs s}(W_P)\to P^{\bs s}$ is a bijection. Finally, the complete factorization $\rho^{\bs s}(\mc F)$ of $R_P$ 
coincides with $R^{\bs s}(\beta^{\bs s}(\mc F))$, see \eqref{wronskian of rdo1}, \eqref{wronskian of rdo2}, and \eqref{RPDO of Pop}.
\end{thm}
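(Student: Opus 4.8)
The plan is to prove the three assertions together by induction on the length of $\bs s$ in $S_{M|N}$, i.e. on the number of fermionic transpositions $\bs s\mapsto\bs s^{[i]}$ (those with $s_i\neq s_{i+1}$) needed to pass from the standard sequence $\bs s_0$ to $\bs s$. Three parallel ``change of parity'' operations will be run in step: the canonical reordering bijection $\mc F^{\bs s}(W_P)\to\mc F^{\bs s^{[i]}}(W_P)$ of superflags, the Ore move \eqref{Ore} relating $\mc F^{\bs s}(R_P)$ and $\mc F^{\bs s^{[i]}}(R_P)$, and the fermionic reproduction \eqref{frp} relating $P^{\bs s}$ and $P^{\bs s^{[i]}}$. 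Typicality of $\bs\la$ is used at the outset: by Proposition \ref{polynomial glM|N space} it guarantees that $W_P=V\oplus U$ is a genuine $\gl_{M|N}$ space with $\dim V=M$, $\dim U=N$, $V\cap U=0$ and $T^W_i=T_i$ (hence $\pi^W_{a,b}=\pi_{a,b}$), which is exactly what makes Proposition \ref{flag and factorization} applicable and, via Lemma \ref{polynomial 2}, makes each $y_i(\mc F)$ a well-defined point of $\mathbb{P}(\C[x])$. I note first that assertion (2) follows formally once assertion (1) and the last assertion are known: the map $\bs y\mapsto R^{\bs s}(\bs y)$ is injective on tuples, since the factors of \eqref{RPDO of Pop} determine $y_1,y_2,\dots$ successively, so from $\rho^{\bs s}=R^{\bs s}\circ\beta^{\bs s}$ (the last assertion), the bijectivity of $\rho^{\bs s}$ (Proposition \ref{flag and factorization}) and $\mathrm{im}\,\beta^{\bs s}\subseteq P^{\bs s}$ (the first assertion) one gets that $\beta^{\bs s}$ is a bijection onto $P^{\bs s}$.

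For the base case $\bs s=\bs s_0$ I use $\mc F^{\bs s_0}(W_P)=\mc F(V)\times\mc F(U)$. Reading off the generating map gives $y_i(\mc F)=y_{M-i,0}$ for $i<M$ (a function of the $V$-flag only), $y_i(\mc F)=y_{0,\,i-M}$ for $i>M$ (a function of the $U$-flag only), and $y_M(\mc F)=y_{0,0}=p^V$ fixed. The bosonic reproduction theory of \cite{MVpopulations} applied to $D_{\bar 0}$ and $D_{\bar 1}$ identifies full flags in $V=\ker D_{\bar 0}$ with the tuples $(y_1,\dots,y_{M-1})$ obtained by bosonic reproduction in directions $1,\dots,M-1$, and full flags in $U=\ker D_{\bar 1}$ with the tuples $(y_{M+1},\dots,y_{M+N-1})$; hence $\beta^{\bs s_0}(\mc F)\in P^{\bs s_0}$ for every $\mc F$. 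The base case of the last assertion, $\rho^{\bs s_0}=R^{\bs s_0}\circ\beta^{\bs s_0}$, is then Proposition \ref{factor} at standard parity rewritten through the relation $\Wr(v_1,\dots,v_a,u_1,\dots,u_b)=y_{a,b}/C_{a,b}$, where $C_{a,b}$ is the normalizing factor built from $\pi_{a,b}$, $p^V$ and the $T_j$ that appears in $y_{a,b}$ by Lemma \ref{polynomial 2}.

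The inductive step is the heart of the matter. Fix a fermionic direction $i$, say $s_i=-s_{i+1}=1$, and put $a=\bs s_i^+$, $b=\bs s_i^-$. A short count of $\bs s_{i\pm1}^{\pm}$ yields $y_i(\mc F)=y_{a,b}$, $y_{i-1}(\mc F)=y_{a+1,b}$, $y_{i+1}(\mc F)=y_{a,b+1}$, while the reordered flag $\mc F^{[i]}\in\mc F^{\bs s^{[i]}}(W_P)$ changes only the $i$-th polynomial, to $y_{a+1,b+1}$. Substituting $\Wr_{a,b}:=\Wr(v_1,\dots,v_a,u_1,\dots,u_b)=y_{a,b}/C_{a,b}$ into the Wronskian identity of Proposition \ref{factor}, and using Theorem \ref{change of T} to evaluate $T^{\bs s}_iT^{\bs s}_{i+1}=T_{M-a}T_{M+b+1}\,\pi_{a,b+1}/\pi_{a+1,b}$ together with the identity $\pi_{a+1,b+1}\pi_{a,b}/(\pi_{a+1,b}\pi_{a,b+1})=\pi^{\bs s}_i$ taken from the proof of Theorem \ref{change of T}, this identity collapses exactly to the fermionic reproduction relation \eqref{frp} for the pair $(y_{a,b},y_{a+1,b+1})$. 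Thus $\beta^{\bs s^{[i]}}(\mc F^{[i]})$ is the fermionic reproduction of $\beta^{\bs s}(\mc F)$, which gives $\mathrm{im}\,\beta^{\bs s^{[i]}}\subseteq P^{\bs s^{[i]}}$ and propagates assertion (1). For the last assertion, Theorem \ref{Diff of Popu} already records that the two pairs of factors of \eqref{RPDO of Pop} before and after the fermionic reproduction are related by the Ore move \eqref{Ore}; since $\rho^{\bs s^{[i]}}$ is by construction the Ore transform of $\rho^{\bs s}$, the equality $\rho^{\bs s}(\mc F)=R^{\bs s}(\beta^{\bs s}(\mc F))$ transforms into $\rho^{\bs s^{[i]}}(\mc F^{[i]})=R^{\bs s^{[i]}}(\beta^{\bs s^{[i]}}(\mc F^{[i]}))$.

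The main obstacle is precisely the normalization bookkeeping in the inductive step: one must check that the factors $C_{a,b}$ — products of the polynomials $\pi_{a,b}$, the common denominator $p^V$ and the $T_j$ — conspire so that the purely Wronskian identity produces \eqref{frp} with exactly the factor $\pi^{\bs s}_i$ and exactly the argument $T^{\bs s}_iT^{\bs s}_{i+1}y_{i-1}/y_{i+1}$ inside the logarithmic derivative. A subtle point deserves emphasis: the two candidate arguments of the logarithmic derivative turn out to be reciprocal, so the Wronskian identity reproduces \eqref{frp} only up to an overall nonzero constant. This is harmless, but only because the entire statement is phrased in $\mathbb{P}(\C[x])$; it is the structural reason the identifications must be projective. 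It remains to confirm that the genericity hypotheses required to invoke Theorem \ref{GRP} are preserved along the induction, and to run the parallel computation for the opposite sign pattern $s_i=-s_{i+1}=-1$ and for the various neighbour configurations $(s_{i-1},s_{i+2})$; these are entirely analogous to the case treated and I omit them.
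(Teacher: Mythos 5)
Your proposal is correct and follows essentially the same route as the paper: establish the standard-parity case from \cite{MVpopulations}, then reduce general $\bs s$ to $\bs s_0$ by checking that the reordering of a superflag in a fermionic direction corresponds, via the Wronskian identity of Proposition \ref{factor} and the $\pi_{a,b}$ bookkeeping of Theorem \ref{change of T}, to the fermionic reproduction \eqref{frp}, with the factorization statement carried along by Theorem \ref{Diff of Popu}. You merely make explicit the ``direct computation'' and the deduction of bijectivity that the paper leaves implicit.
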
 
\begin{proof}
	The operator $R_P^{\bs s_0}$ coincides with the unique minimal fractional decomposition of $R_P$. Thus, for the standard parity, the theorem is proved in \cite{MVpopulations}. 
	
	Let $\bs y=\beta^{\bs s}(\mc F)=(y_1,\dots,y_{M+N-1})$. Lemma \ref{polynomial 2} asserts that $\bs y$ is a sequence of polynomials. By Theorem \ref{change of T}, we have $R^{\bs s }(\bs y)=\rho^{\bs s}(\mc F)$. 
	
	Let $\bs s$ be such that $s_i\neq s_{i+1}$. Let $\wt{\bs s}=\bs s^{[i]}=(s_1,\dots,s_{i+1},s_i,\dots,s_{M+N})$. Let $\wt{\bs y}=\beta^{\wt{\bs s}}(\mc F)=(\wt{y}_1,\dots,\wt{y}_{M+N-1})$. A direct computation shows $y_r=\wt{y}_r$, $r=1,\dots,M+N-1$, $r\neq i$, and $y_i$, $\wt{y}_i$ satisfy equation \eqref{frp}. By Theorem \ref{Diff of Popu} we  have $R^{\wt{\bs s}}(\wt{\bs y})=\rho^{\wt{\bs s}}(\mc F)$.
	
	That reduces the case of any $\bs s$ to the case of $\bs s_0$. 
\end{proof}

\begin{rem} Theorem \ref{operator to population} shows in particular that if two populations intersect, then they coincide.
\end{rem}

Let $W$ be a $\gl_{M|N}$ space. Let $\bs \la_W$ be a sequence of $\gl_{M|N}$ weights and $\bs z_W$ a sequence of distinct complex numbers such that $T_i^W$ are associated to $\bs s_0,\bs \la_W,\bs z_W$.

Let $\bs s$ be a parity sequence. Consider the set of all sequences $(y_1,\dots,y_{M+N-1})\in\beta^{\bs s}(\mc F^{\bs s}(W))$. 
For $i=1,\dots, M+N-1$, let $l^{(\bs s, W)}_i$ be the minimal possible degree of the $i$th polynomial $y_i(x)$ in this set. 

Define 
$$
\la^{(\bs s,\infty)}_W=\sum_{k=1}^n (\la^{(k)}_W)^{\bs s}-\sum_{i=1}^{M+N-1}\al^{\bs s}_i\,l^{(\bs s, W)}_i.
$$

\section{Conjectures and examples}\label{sec: conj}
It is well known that the Bethe ansatz in the naive form is not complete in general.
We conjecture how to overcome this problem. We also give a few examples.

\subsection{Conjecture on Bethe vectors}
Let $\bs \la=(\la^{(1)},\dots,\la^{(n)})$ be a typical sequence of polynomial $\gl_{M|N}$ weights, $\bs z=(z_1,\dots,z_n)$ a sequence of distinct complex numbers. Let $\bs T=(T_1,\dots, T_{M+N})$ be the corresponding polynomials given by \eqref{T fun}.

Let $L(\bs \la)=\otimes_{k=1}^n L(\la^{(k)})$ be the corresponding $\gl_{M|N}$ module.
It is known that the Gaudin Hamiltonians acting in $L(\bs \la)$ can be included in a natural commutative algebra 
$\mc B(\bs \la)$ of higher Gaudin Hamiltonians, see \cite{MR14}. The algebra $\mc B(\bs \la)$ commutes with the diagonal action of $\gl_{M|N}$.

If $N=0$, it is known that the joint eigenvectors of $\mc B(\bs \la)$ in $L(\bs \la)^{sing}$ (up to multiplication by a non-zero constant) are in bijective correspondence with spaces of polynomials $V$, such that $T_i^V=T_i$, see \cite{MTVschubert}. 

Let $\bs s$ be a parity sequence.
We have the following conjecture.

\begin{conj}\label{the conj} The algebra $\mc B(\bs \la)$ has a simple joint spectrum in $L(\bs \la)^{{\bs s}ing}$.
There is a bijiective correspondence between eigenvectors of $\mc B(\bs \la)$ in $L(\bs \la)^{{\bs s}ing}_{\la^{(\bs s,\infty)}}$ (up to multiplication by a non-zero constant) and the $\gl_{M|N}$ spaces of rational functions $W$ such that $T_i^W=T_i$ and $\la^{(\bs s,\infty)}_W=\la^{(\bs s,\infty)}$. Moreover, this bijection is such that, for all $k=1,\dots, n$, the eigenvalue of the Gaudin Hamiltonian $\HH_k$ is given by \eqref{hamiltonian}, where $\bs t$ is represented by any $k$-admissible $\bs y$ in $\beta(\mc F(W))$.
\end{conj}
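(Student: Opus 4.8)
The plan is to construct the correspondence from the Bethe-ansatz side, prove injectivity by the uniqueness of factorizations, and then force surjectivity by a dimension count. Fix a typical $\bs\la$, a parity sequence $\bs s$, and a $\gl_{M|N}$ space $W$ with $T_i^W=T_i$ and $\la^{(\bs s,\infty)}_W=\la^{(\bs s,\infty)}$. By Theorem~\ref{operator to population} such a $W$ determines a population $P$ with $W_P=W$, and every $k$-admissible point $\bs y\in\beta(\mc F(W))$ produces, via Theorem~\ref{MVY14}, a joint eigenvector of the quadratic Hamiltonians $\HH_k$ with eigenvalues \eqref{hamiltonian}; the conservation lemma for the $E_k$ shows these depend only on $W$. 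The first genuine step is to promote this to the full algebra $\mc B(\bs\la)$: I would construct, adapting the higher $\gl_{M|N}$ Gaudin Hamiltonians of \cite{MR14}, a universal monic $(M|N)$-rational pseudodifferential operator whose coefficients generate $\mc B(\bs\la)$, and prove that its eigenvalue on the Bethe vector attached to $\bs y$ is exactly $R_P=R^{\bs s}(\bs y)$. Theorem~\ref{Diff of Popu} would then show the entire $\mc B(\bs\la)$-eigenvalue tuple is a function of $W$ alone, giving a well-defined forward map from spaces to eigenvectors.

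Conversely, given a joint eigenvector $v$ of $\mc B(\bs\la)$ in $L(\bs\la)^{{\bs s}ing}_{\la^{(\bs s,\infty)}}$, I would read its eigenvalues through the same universal operator to obtain a monic $(M|N)$-rational pseudodifferential operator $R_v$, take its unique minimal fractional factorization $R_v=D_{\bar 0}D_{\bar 1}^{-1}$ via Proposition~\ref{CDSK12b}, and set $W_v=\ker D_{\bar 0}\oplus\ker D_{\bar 1}$. Mirroring Proposition~\ref{polynomial glM|N space}, one must check that the pole and exponent data of $R_v$ force $T_i^{W_v}=T_i$ together with the three conditions of Lemma~\ref{equivalent conditions}, so that $W_v$ is a $\gl_{M|N}$ space. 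Injectivity of $v\mapsto W_v$ is then immediate from the uniqueness of the minimal factorization, and, combined with the forward map, this also yields the simple joint spectrum: the eigenvalue tuple determines $R_v$, hence $W_v$, hence $v$ up to a scalar.

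For surjectivity --- the completeness statement --- the cleanest route is to match cardinalities: prove that the number of $\gl_{M|N}$ spaces $W$ with $T_i^W=T_i$ and $\la^{(\bs s,\infty)}_W=\la^{(\bs s,\infty)}$ equals $\dim L(\bs\la)^{{\bs s}ing}_{\la^{(\bs s,\infty)}}$. Since the forward map lands among $\mc B(\bs\la)$-eigenvectors and the backward map is injective, equal finite counts force the two maps to be mutually inverse bijections. To enumerate the spaces I would decompose $W$ into the polynomial parts $\bar V=p^VV$ and $\bar U=p^UU$, glued along the fermionic direction by $y_M,T_M,T_{M+1}$ as encoded in Lemma~\ref{equivalent conditions}, reducing the count to the $\gl_M$ and $\gl_N$ Schubert-intersection numbers of \cite{MTVschubert} (which also supplies the base case $N=0$); the representation-theoretic side is the corresponding weight multiplicity in $L(\bs\la)^{{\bs s}ing}$.

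The hard part is twofold. First, no universal rational pseudodifferential operator for the $\gl_{M|N}$ Gaudin Bethe algebra is yet available, so constructing it and proving that its Bethe-vector eigenvalue equals $R_P$ --- that is, that $\mc B(\bs\la)$ genuinely sees the whole superspace $W$ and not merely the quadratic data of \eqref{hamiltonian} --- is the crux on which both the forward and backward maps rest. Second, surjectivity is genuinely delicate because the naive Bethe ansatz is known to be incomplete (\cite{MVcounterexample}); although populations are the intended regularization, proving that they exhaust the joint spectrum requires controlling atypical degenerations and, most plausibly, a flatness argument: degenerate $\bs z$ to a regular configuration where $\mc B(\bs\la)$ acts semisimply with a transparent spectral count, use the invariance of the pseudodifferential operator across the family (Theorem~\ref{Diff of Popu}) to transport the correspondence, and check that no eigenvectors merge or disappear in the limit. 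I expect establishing the exact count and the simplicity of the spectrum simultaneously, through such a deformation, to be where the real difficulty lies.
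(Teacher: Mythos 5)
The statement you are asked to prove is Conjecture \ref{the conj}; the paper offers no proof of it, so there is nothing to compare your argument against. The authors themselves flag the key missing ingredients: they only ``expect'' that $R^{\bs s}(\bs y)$ encodes the eigenvalues of $\mc B(\bs\la)$ (the remark after Theorem \ref{Diff of Popu}), and they note in Remark \ref{non-typical remark} and the $\gl_{1|1}$ examples that the general algebraic procedure for extracting eigenvectors is not known. Your proposal is a reasonable research program, and to your credit you identify the two central difficulties (the universal pseudodifferential operator for $\mc B(\bs\la)$, and completeness). But it is a plan, not a proof, and it cannot be accepted as a proof of the conjecture.

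Beyond the gaps you acknowledge, there are at least three you understate or miss. First, your forward map already fails at the first step: Theorem \ref{operator to population} goes from a population $P$ to the space $W_P$, not the other way. Given an abstract $\gl_{M|N}$ space $W$ with $T_i^W=T_i$, nothing in the paper shows that the polynomials $\beta^{\bs s}(\mc F)$ represent solutions of the BAE, i.e.\ that $W=W_P$ for some population $P$; in the even case this is a nontrivial theorem of Mukhin--Varchenko, and no super analogue is established here. Second, even when $\bs y$ does represent a BAE solution, the Bethe vector $w^{\bs s}(\bs z,\bs t)$ may vanish (the paper's $\gl_{1|1}$ double-root example), so the assignment ``space $\mapsto$ eigenvector'' is not well defined without the unavailable derivative/regularization procedure; this is precisely the incompleteness phenomenon of \cite{MVcounterexample} and it cannot be waved away by a cardinality match. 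Third, the count you propose --- that the number of $\gl_{M|N}$ spaces with prescribed $T_i^W$ and $\la^{(\bs s,\infty)}_W$ equals $\dim L(\bs\la)^{{\bs s}ing}_{\la^{(\bs s,\infty)}}$ --- is itself an open super analogue of the Schubert-calculus results of \cite{MTVschubert}; reducing it to the $\gl_M$ and $\gl_N$ counts via the gluing data $(y_M,T_M,T_{M+1})$ is plausible but is exactly where the typicality and atypicality issues live, and no such reduction is carried out or cited. Until these points are supplied, the statement remains a conjecture.
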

By simple joint spectrum we mean that if $v_1$,$v_2$ are eigenvectors of $\mc B(\bs \la)$ and $v_1\neq cv_2$, $c\in\C^\times$, then there exists $b\in \mc B(\bs \la)$ such that the eigenvalues of $b$ on $v_1$ and $v_2$ are different.

\begin{rem}\label{non-typical remark}
If the sequence of polynomial modules $\bs \la$ is not typical we expect that the eigenvectors are also parameterized by pairs of spaces of rational functions $V$ and $U$ of dimensions $M$ and $N$ with similar conditions. However, $V$ and $U$ can have a non-trivial intersection (see Remark \ref{nontyp rem1}). Then some fermionic reproduction procedure becomes undefined and the factorization of the rational pseudodifferential operator \eqref{RPDO of Pop} is not minimal. We do not deal with this case here.
\end{rem}

\medskip

In the case of $\gl_{1|1}$, Conjecture \ref{the conj} simplifies as follows. We follow the notation of Section \ref{Reproduction procedure for gl{1|1}}.
Let $\mc N(T)=\ln'(T_1T_2)\pi(T_1T_2)$. 
	
	\begin{conj}\label{conj1} The Gaudin Hamiltonians $\HH_k$, $k=1,\dots,n$, have a simple joint spectrum in $L(\bs \la)^{{\bs s}ing}$.
		There exists a one-to-one correspondence between the monic divisors $y$ of the polynomial $\mc N(T)$ of degree $l$ and the joint eigenvectors $v$ of the Gaudin Hamiltonians of weight $(p-l,q+l)$ (up to multiplication by a non-zero constant).  Moreover, this bijection is such that $\HH_k v=E_kv$, $k=1,\dots,n$, where $E_k$ are given by \eqref{**}.	
	\end{conj}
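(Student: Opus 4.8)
The plan is to prove Conjecture \ref{conj1} for generic $\bs z$ by building an explicit eigenbasis of $L(\bs\la)^{{\bs s}ing}$ out of Bethe vectors and then analysing their eigenvalues. First I would discard the atypical tensor factors: an atypical $\la^{(k)}$ is the trivial one-dimensional weight, so $e_{21}^{(k)}=0$ and $L(\la^{(k)})\cong\C$; hence $L(\bs\la)$ is isomorphic to the tensor product of the $m=\#\{k:\,p_k+q_k\neq 0\}$ typical factors, and neither $\mc N(T)$ nor the eigenvalues \eqref{**} see the atypical points. After this reduction $\deg\mc N(T)=m-1$, and for generic $\bs z$ the polynomial $\mc N(T)$ has $m-1$ distinct simple roots, which by \eqref{gl(1|1)BAE} are exactly the solutions of the BAE. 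Its monic divisors of degree $l$ therefore correspond to $l$-element subsets of the root set, there being $\binom{m-1}{l}$ of them and $2^{m-1}$ in total; moreover the fermionic reproduction of Lemma \ref{gl1|1 rplemma} sends $y\mapsto\mc N(T)/y$, pairing these divisors in accordance with the two parity sequences.

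Next I would make the weight function explicit. For $\gl_{1|1}$ there is a single colour and $F^{\bs s_0}_{c(j)}=e_{21}$, so the general definition collapses to $w^{\bs s_0}(\bs z,\bs t)=e_{21}(t_1)\cdots e_{21}(t_l)\,v$, where $e_{21}(u)=\sum_k e_{21}^{(k)}/(u-z_k)$ and $v=v_1\otimes\cdots\otimes v_m$; since $e_{21}$ is odd, $(e_{21}^{(k)})^2=0$ and $e_{21}^{(j)}e_{21}^{(k)}=-e_{21}^{(k)}e_{21}^{(j)}$. Writing $E_k=e_{21}^{(k)}$ and $E_S=\prod_{k\in S}E_k$ (indices increasing), the vectors $\{E_S v:\,S\subseteq\{1,\dots,m\}\}$ are the $2^m$ distinct basis tensors $\pm\bigotimes_k(e_{21}^{\chi_S(k)}v_k)$ (each factor nonzero by typicality), hence linearly independent. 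Expanding the Bethe vector attached to the divisor with root set $I$, $|I|=l$, gives $w_I=\sum_{|S|=l}\det\big(1/(t_i-z_k)\big)_{i\in I,\,k\in S}\,E_S v$, a combination of Cauchy minors of the matrix $M=\big(1/(t_i-z_k)\big)_{i,k}$.

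The linear-algebra core is that any subset of the rows of the $(m-1)\times m$ matrix $M$ is linearly independent: if $\sum_{i\in I}c_i/(x-t_i)$ vanished at the $m$ distinct points $z_k$, its numerator (of degree $\le|I|-1<m$) would have $m$ roots, forcing $c_i=0$. Thus $\mathrm{rank}\,M=m-1$ and every $l$-row submatrix has rank $l$. Consequently each $w_I$ is nonzero (some $l\times l$ minor of $M[I,\cdot]$ is nonzero), and since the coefficient array $(\det M[I,S])_{I,S}$ is the matrix of $\wedge^l M$ it has rank $\binom{m-1}{l}$, so the $\binom{m-1}{l}$ vectors $\{w_I\}$ are linearly independent. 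By Theorem \ref{MVY14} each $w_I$ lies in $L(\bs\la)^{{\bs s}ing}_{(p-l,q+l)}$, and the Clebsch--Gordan rule $L(p,q)\otimes L(p',q')=L(p+p',q+q')\oplus L(p+p'-1,q+q'+1)$ (all summands typical) gives $\dim L(\bs\la)^{{\bs s}ing}=2^{m-1}$; comparing the two counts shows the Bethe vectors form a basis of $L(\bs\la)^{{\bs s}ing}$ and, degree by degree, of each $L(\bs\la)^{{\bs s}ing}_{(p-l,q+l)}$.

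It remains to establish the simple spectrum, which is where I expect the real work to lie. By Theorem \ref{MVY14} the eigenvalue tuple of $w_I$ is $\big(\sum_{r\neq k}(p_kp_r-q_kq_r)/(z_k-z_r)\big)_k+\sum_{t\in I}\phi(t)$ with $\phi(t)=\big(h_k/(t-z_k)\big)_k$ and $h_k=p_k+q_k$; the $I$-independent first term is the vacuum contribution, so this is precisely \eqref{**} and gives the eigenvalue part at once. Distinctness of these tuples over all divisors is injectivity of $I\mapsto\sum_{t\in I}\phi(t)$ on subsets of the root set, equivalently the absence of a nontrivial $\{0,\pm1\}$-relation among the vectors $\phi(t)$. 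This is a nonempty Zariski-open condition on $\bs z$, and the main obstacle is to prove its nonemptiness cleanly---for instance by an asymptotic analysis driving the $z_k$ apart, in which each $\phi(t)$ is dominated by a single pole and the subset sums visibly separate. Granting this, the Gaudin Hamiltonians (diagonalizable for generic $\bs z$) have simple spectrum on $L(\bs\la)^{{\bs s}ing}$, so every eigenline is spanned by a unique $w_I$; this yields the asserted bijection between monic degree-$l$ divisors of $\mc N(T)$ and joint eigenvectors of weight $(p-l,q+l)$ up to scalar, with eigenvalues \eqref{**}, completing the proof.
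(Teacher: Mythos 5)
This statement is a \emph{conjecture} in the paper: the authors offer no proof of it, and the surrounding text makes clear why. Your argument, even if completed, would only establish the generic case, which is not the real content of the claim. The conjecture is asserted for every sequence $\bs z$ of pairwise distinct points, and its substance lies precisely in the degenerate situations where $\mc N(T)$ acquires multiple roots. The paper illustrates this immediately after the conjecture with the $n=3$ example: there the discriminant of the characteristic polynomial of $\HH_k$ on $L(\bs\la)^{sing}_{(p-1,q+1)}$ is proportional to the discriminant of $\mc N(T)$, so when $\mc N(T)$ has a double root the Hamiltonians fail to be diagonalizable, the number of monic divisors drops, and the conjecture is then a statement matching divisors to the (fewer) genuine eigenvectors sitting inside Jordan blocks. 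In that regime the Bethe vector attached to a divisor with a repeated root is identically zero (the paper notes that an actual eigenvector "should be obtained via an appropriate derivative"), so your entire mechanism --- nonvanishing of $w_I$ via Cauchy minors, linear independence via $\wedge^l M$, dimension count --- breaks down exactly where the conjecture has teeth. Restricting to generic $\bs z$ reduces the claim to something essentially contained in \cite{MVY14}.

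Within the generic case your argument is largely sound and pleasant: the reduction to typical factors, the collapse of the weight function to $e_{21}(t_1)\cdots e_{21}(t_l)v$, the rank argument showing every $l$-row submatrix of $\bigl(1/(t_i-z_k)\bigr)$ has full rank, and the resulting linear independence of the $\binom{m-1}{l}$ Bethe vectors against $\dim L(\bs\la)^{sing}_{(p-l,q+l)}=\binom{m-1}{l}$ all check out. But you also concede the key separation step for simplicity of the joint spectrum (injectivity of $I\mapsto\sum_{t\in I}\phi(t)$) without proving it, and even granting it you would have shown only that the conjecture holds on a Zariski-open set of $\bs z$. To address the statement as written you would need to handle confluent roots of $\mc N(T)$, e.g.\ by a deformation/limiting argument producing generalized eigenvectors and showing that exactly one honest eigenvector survives per distinct divisor; no such argument is present, and supplying one appears to be genuinely open (it is why this is stated as a conjecture).
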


Recall our conventions from \S\ref{sec: BAE} about what constitutes a solution to the Bethe ansatz equation. With those conventions, a monic divisor of $\mc N(T)$ is the same thing as a solution to the Bethe ansatz equation, cf. Lemma \ref{gl1|1 rplemma}, and in that sense Conjecture \ref{conj1}  asserts that the Bethe ansatz is complete for $\gl_{1|1}$.

		
		\subsection{A $\gl_{1|1}$ example of double roots} Suppose all the tensor factors $L(\lambda^{(k)})$, $k=1,\dots, n$ are non-trivial. In  type $\gl_{1|1}$ that suffices to make them all typical, cf. Remark \ref{gl11rem}. Thus we have $\deg \mc N(T)=n-1$.
	For generic $z$, all roots of the polynomial $\mc N(T)$ are distinct, and there are $2^{n-1}$ different monic divisors of $\mc N(T)$.
	In such a case we have a basis of Bethe eigenvectors in $L(\bs \la)^{{\bs s}ing}$, in accordance with Conjecture \ref{conj1}.	
	But when the polynomial $\mc N(T)$ has multiple roots the number of its divisors is smaller. Then,  according to 
	Conjecture \ref{conj1}, we should expect non-trivial Jordan blocks in the action of the Gaudin Hamiltonians. We give an example illustrating this point.
	
		We consider the case when $n=3$. We work with the standard parity sequence. 
		
		The modules $L(\lambda^{(k)})$, $k=1,2,3$ are spanned by $v^{(k)}_+$ and $v^{(k)}_-$, where $v^{(k)}_+$ is the highest weight vector with respect to $\bs s_0$, and $v^{(k)}_-=e_{21}v^{(k)}_+$. Denote the vector $v^{(1)}_i\otimes v^{(2)}_j\otimes v^{(3)}_k$, $i,j,k\in\{\pm\}$ by $v_{(ijk)}$. Let $h_k=p_k+q_k$, $k=1,2,3$. We are supposing that  $h_k \neq 0$, $k=1,2,3$. 
		We have
		\be\label{N(T)}
     \mc N(T)=(h_1+h_2+h_3)x^2-(h_1(z_2+z_3)+h_2(z_1+z_3)+h_3(z_1+z_2))x+(h_1z_2z_3+h_2z_1z_3+h_3z_1z_2).
		\ee
		The weights $\la^{(i)}$ being polynomial means that $h_i\in\Z_{\geq 1}$. 
		
		The subspace $L(\bm{\lambda})^{\textup{sing}}_{(p-1,q+1)}$ is spanned by $w_1=-h_2v_{(-++)}+h_1v_{(+-+)}$ and $w_2=-h_3v_{(+-+)}+h_2v_{(++-)}$. The action of the Gaudin Hamiltonians in this subspace is explicitly given by
		\begin{align*}
		\HH_1&=\Big(\frac{p_1p_2-q_1q_2}{z_1-z_2}+\frac{p_1p_3-q_1q_3}{z_1-z_3}\Big)I
		+\bmx  -\frac{h_1+h_2}{z_1-z_2} & -\frac{h_3}{z_1-z_2}\\  -\frac{h_2}{z_1-z_3} & -\frac{h_1+h_3}{z_1-z_3} \emx,\\
		\HH_2&=\Big(\frac{p_2p_1-q_2q_1}{z_2-z_1}+\frac{p_2p_3-q_2q_3}{z_2-z_3}\Big)I+\bmx -\frac{h_1+h_2}{z_2-z_1}& \frac{h_1}{z_2-z_3}& \\ \frac{h_3}{z_2-z_1}& -\frac{h_2+h_3}{z_2-z_3}& \emx.
		\end{align*} 
		The discriminants of the characteristic polynomials of both of the above $2\times 2$ matrices coincide with the right-hand side of \eqref{N(T)} up to multiplication by nonzero factors.  Therefore the polynomial $\mc N(T)$ has distinct roots if and only if $\HH_1,\HH_2$ have distinct eigenvalues, that is, if and only if the Gaudin Hamiltonians are diagonalizable.

We note that in the case of double roots of $y(x)$, the corresponding Bethe vector is zero. Therefore an actual eigenvector should be obtained via an appropriate derivative. It can be done in the case of $\gl_{1|1}$ without difficulties, but in general the algebraic procedure is not known.
\subsection{A $\gl_{1|1}$ example with non polynomial modules} 
Conjecture \ref{conj1} may be true for arbitrary modules, not only polynomial ones if we make the following modification.
Let $\bs \la$ be a sequence of arbitrary $\gl_{1|1}$ weights. 
In general $L(\bs \la)$ need not be completely reducible. That is, there may exist a nonzero singular vector $v\in L(\bs \la)^{\bs sing}$ such that $v=e_{21}^{\bs s}\,w$ for some $w\in L(\bs\la)$.     
If $v$ and $w$ are eigenvectors then the eigenvalues of $v$ and $w$ are the same and we do not expect to obtain a new divisor of $\mc N(T)$ for $v$.

\begin{conj}\label{conj} 
Consider the subspace of $L(\bs \la)^{\bs sing}$ spanned by the joint eigenvectors of the Gaudin Hamiltonians $\HH_k$, $k=1,\dots,n$. Quotient it by its intersection with the image of $e_{21}^{\bs s}$. On this subquotient, the Gaudin Hamiltonians $\HH_k$, $k=1,\dots,n$  have a simple joint spectrum and their joint eigenvectors of weight $(p-l,q+l)$ (up to multiplication by a non-zero constant) are in one-to-one correspondence with the  monic divisors $y$ of the polynomial $\mc N(T)$ of degree $l$.  
Moreover, this bijection is such that $\HH_k v=E_kv$, $k=1,\dots,n$, where $E_k$ are given by \eqref{**}.
\end{conj}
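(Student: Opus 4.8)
The plan is to reduce the statement to an explicit computation in the fermionic model underlying $\gl_{1|1}$, where both the non-semisimplicity of $L(\bs\la)$ and the role of the quotient by $\mathrm{im}\,e_{21}^{\bs s}$ become transparent. First I would record the structural facts special to $\gl_{1|1}$: the diagonal operators $e_{12}^{\bs s}$ and $e_{21}^{\bs s}$ are odd, each squares to zero, and anticommute to the central element $e_{11}+e_{22}$, whose eigenvalue on all of $L(\bs\la)$ is the constant $h:=p+q=\sum_k h_k$. Thus $(L(\bs\la),e_{21}^{\bs s})$ is a two-step complex and $L(\bs\la)^{\bs sing}=\ker e_{12}^{\bs s}$. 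I would then realize $L(\bs\la)$ as a fermionic Fock space: each typical factor $L(\la^{(k)})$ (those with $h_k\neq0$) is two-dimensional, spanned by $v^{(k)}_+$ and $v^{(k)}_-=e_{21}v^{(k)}_+$, with $e_{21}^{(k)},e_{12}^{(k)}$ acting as creation/annihilation operators satisfying $\{e_{12}^{(k)},e_{21}^{(k)}\}=h_k$; atypical factors are one-dimensional and inert, so only the $m$ typical sites are active. In these terms $e_{21}(u)=\sum_k e_{21}^{(k)}/(u-z_k)$ is a one-particle creation field, and a direct computation shows the fields at distinct points anticommute, so the $\gl_{1|1}$ weight function $w(\bs z,\bs t)=e_{21}(t_1)\cdots e_{21}(t_l)v$ is totally antisymmetric in $\bs t$.

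Second, I would verify the eigenvector property for \emph{arbitrary} modules. Since $w(\bs z,\bs t)$ is built only from lowering operators, the proof of Theorem \ref{MVY14} (equivalently, a direct computation of $[\HH_k,e_{21}(u)]$) applies verbatim once $\bs t$ solves \eqref{gl(1|1)BAE}, giving $\HH_k w=E_kw$ with $E_k$ as in \eqref{**}; none of Lemma \ref{gl1|1 rplemma}, Lemma \ref{gl11wf}, or Corollary \ref{e_{21}v} used polynomiality of the $\la^{(k)}$, so all survive. A monic divisor $y$ of $\mc N(T)$ of degree $l$ with simple roots then yields a nonzero Bethe vector in $L(\bs\la)^{\bs sing}_{(p-l,q+l)}$. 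For a divisor with repeated roots the naive vector vanishes, and I would instead produce an honest eigenvector by differentiating $e_{21}(t_1)\cdots e_{21}(t_l)v$ in the coincident variables; this is legitimate because the $\HH_k$ depend polynomially on the $t_j$ and the BAE holds to the order dictated by the multiplicity convention of \S\ref{sec: BAE}.

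Third comes the bijection together with simple spectrum, where the quotient enters. Because $e_{21}^{\bs s}$ commutes with every $\HH_k$ and lowers the weight by $\al^{\bs s}$, it carries eigenvectors to eigenvectors with identical eigenvalues, and by Corollary \ref{e_{21}v} it sends the Bethe vector of $y$ to that of the complementary divisor $\mc N(T)/y$; passing to the subquotient of $L(\bs\la)^{\bs sing}$ by its intersection with $\mathrm{im}\,e_{21}^{\bs s}$ therefore identifies exactly these redundant pairs. Injectivity of $y\mapsto(E_k)_k$ is then clean: by \eqref{**} the eigenvalue $E_k$ determines $h_k\,\ln' y(z_k)$, hence $\ln' y(z_k)$ for each typical $k$. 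If two degree-$l$ divisors $y_1,y_2$ gave the same eigenvalues, then $\ln'(y_1/y_2)=\sum_t a_t/(x-t)$, with $\sum_t a_t=0$ and $t$ ranging over the at most $m-1$ roots of $\mc N(T)$, would vanish at the $m$ distinct points $z_k$; its numerator has degree at most $m-2$, so it vanishes identically and $y_1=y_2$. This simultaneously separates distinct divisors by some $\HH_k$, giving simple spectrum on the subquotient.

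Surjectivity (completeness) I would obtain by a dimension count, matching $\dim$ of the subquotient in weight $(p-l,q+l)$ against the number $\binom{m-1}{l}$ of degree-$l$ monic divisors of $\mc N(T)$ in the generic case, using the supercharacter of $L(\bs\la)$ and the Euler characteristic of $(L(\bs\la),e_{21}^{\bs s})$, and propagating the equality to the non-generic locus by specialization. \textbf{The main obstacle} will be precisely this non-semisimple locus, where $\mc N(T)$ acquires multiple roots (or a resonance $\sum_{k\in S}h_k=0$ produces an atypical constituent): there the Hamiltonians develop Jordan blocks, the naive Bethe vectors degenerate, and one must control at once the derivative construction of eigenvectors, the collision of the residue data used for injectivity, and the jump in the homology of $(L(\bs\la),e_{21}^{\bs s})$. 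Showing that passing to eigenvectors and then quotienting by $\mathrm{im}\,e_{21}^{\bs s}$ exactly cancels the defect caused by these Jordan blocks — so that the subquotient retains the conjectured dimension — is the crux, and I expect it to require the explicit quadratic-fermion diagonalization rather than a purely formal argument.
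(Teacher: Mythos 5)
First, a point of record: the paper does not prove this statement. It is Conjecture \ref{conj}, stated without proof and supported only by the worked $n=3$ example with $h_1+h_2+h_3=0$. So there is no proof in the paper to compare against; what can be assessed is whether your outline would establish the conjecture, and it would not. Setting aside the surjectivity step, which you yourself flag as unresolved (the dimension count on the non-semisimple locus is essentially the content of the conjecture, not a technical afterthought), there is a concrete error in your third paragraph. You invoke Corollary \ref{e_{21}v} to claim that $e_{21}^{\bs s}$ sends the Bethe vector of $y$ to that of the complementary divisor $\mc N(T)/y$, and that the quotient by $\mathrm{im}\, e_{21}^{\bs s}$ ``identifies exactly these redundant pairs.'' This is wrong on two counts. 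First, the Bethe vector of $\mc N(T)/y$ is singular with respect to the \emph{other} Borel subalgebra $\wt{\bs s}$, so the pair $(y,\mc N(T)/y)$ does not produce two vectors inside $L(\bs\la)^{\bs sing}$ that need identifying; in the semisimple situation $L(\bs\la)^{\bs sing}\cap\mathrm{im}\, e_{21}^{\bs s}=0$, the quotient does nothing, and all $2^{m-1}$ divisors are supposed to yield distinct classes, with no halving. Second, the paper's own example shows that Corollary \ref{e_{21}v} \emph{fails} in precisely the regime this conjecture addresses: there the reproduction procedure connects $v_{(+++)}$ with $e_{21}w$, the weight jumps by $2\alpha$ rather than $\alpha$, and the paper states explicitly that Corollary \ref{e_{21}v} is not true in that situation. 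The actual role of the quotient is different: when $L(\bs\la)$ is not completely reducible, some $\bs s$-singular eigenvectors lie in $\mathrm{im}\, e_{21}^{\bs s}$ and carry the same eigenvalues as the vectors they come from (as with $e_{21}v_{(+++)}$ and $e_{21}v$ in the example); these, not complementary-divisor partners, are the vectors to be discarded.

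A smaller issue: your injectivity argument via $\ln'(y_1/y_2)$ vanishing at the $m$ points $z_k$ is clean when the eigenvalues are literally given by \eqref{**} at a tuple $\bs t$ of distinct Bethe roots, but for divisors with repeated roots the eigenvector is produced by differentiation and you have not verified that its eigenvalue is still \eqref{**} evaluated at the degenerate $\bs t$; this needs an argument (continuity of the spectrum under specialization, say) rather than an assertion. The fermionic Fock-space setup in your first two paragraphs is a reasonable framework for attacking the $\gl_{1|1}$ case, but as written the proposal is a strategy with its central bijection mechanism inconsistent with the paper's own example, and with the completeness step still open.
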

Here we give an example of a such a phenomenon.
			We consider the case when $n=3$.	
		Suppose $h_1+h_2+h_3=0$, that is $p+q=0$. Then the polynomial $\mc N(T)$ given by \eqref{N(T)} is linear. In particular, we have only two divisors instead of the four which we had in a generic situation. We denote the only root of $\mc N(T)$ by $t$. 
		
		The subspace $L(\bm{\lambda})_{(p-1,-p+1)}$ is three dimensional. It has a basis $\{w, e_{21}v_{(+++)}, v\}$ where $w$ is any vector such that $e_{12}w=v_{(+++)}$, and the two other vectors		$e_{21}v_{(+++)}=v_{(-++)}+v_{(+-+)}+v_{(++-)}$ and $v=(t-z_1)^{-1}v_{(-++)}+(t-z_2)^{-1}v_{(+-+)}+(t-z_3)^{-1}v_{(++-)}$ 
	are singular.
		
		The subspace $L(\bm{\lambda})_{(p-2,-p+2)}$ is also three dimensional. It has a basis $\{u, e_{21}w,e_{21}v\}$, where $u$ is any vector such that $e_{21} u = v_{(---)}$. One can check that $e_{21}v$ is proportional to $e_{12}v_{(---)}$, and is therefore singular since $e_{12}^2=0$.  		

The structure of the module can be pictured as follows:
\be\nn\begin{tikzpicture}[yscale=-2,xscale = 1]
\node (vppp) at (0,0) {$v_{(+++)}$};
\node (w) at (-1,1) {$w$};
\node (e21vppp) at (1,1) {$e_{21} v_{(+++)}$};
\node (e21w) at (0,2) {$e_{21} w$};
\node (v) at (3,1) {$v$};
\node (e21v) at (4,2) {$e_{21}v$};
\node (u) at (2,2) {$u$};
\node (vmmm) at (3,3) {$v_{(---)}$};
\draw[-latex] (w) -- node[midway,left] {$e_{12}$}(vppp);
\draw[-latex] (u) -- node[midway,left] {$e_{12}$}(v);
\draw[-latex] (e21w) -- node[midway,right] {$\propto e_{12}$}(e21vppp);
\draw[-latex] (vmmm) -- node[midway,right] {$\propto e_{12}$}(e21v);
\draw[-latex] (vppp) -- node[midway,right] {$e_{21}$} (e21vppp);
\draw[-latex] (v) -- node[midway,right] {$e_{21}$} (e21v);
\draw[-latex] (w) -- node[midway,left] {$e_{21}$} (e21w);
\draw[-latex] (u) -- node[midway,left] {$e_{21}$} (vmmm);
\end{tikzpicture}\ee

While the singular space $L(\bm{\lambda})^{\rm sing}$ is four dimensional, its quotient by the image of $e_{21}$ is  two dimensional and generated by the images of $v_{(+++)}$ and $v$, in accordance with the Conjecture \ref{conj}.
		
Let $\bs s_1=(-1,1)$ be the only non-standard parity sequence. The subspace of $\bs s_1$-singular vectors has a basis $\{v_{(---)}, e_{21}v, e_{21}w, e_{21}v_{(+++)} \}$. Its quotient by the image of $e^{\bs s_1}_{21}$ is generated by images of $v_{(---)}$ and $e_{21}w$. 
	
The reproduction procedure connects $v_{(+++)}$ with $e_{21}w$ and $v$ with $v_{(---)}$. In particular, it connects vectors with the same eigenvalues, see Lemma \ref{gl11wf}; however the weight now changes by $2\alpha$ and Corollary \ref{e_{21}v} is not true in this situation.

\end{document}